\newcommand{\ssymbol}[1]{^{\@fnsymbol{#1}}}
\title{Unstable manifolds for rough evolution equations\thanks{This work is supported in part by a NSFC Grant No. 12171084 and the Fundamental Research Funds for the Central Universities No. 2242022R10013.}}
\author{Hongyan Ma}
\author[$\ssymbol{2}$]{Hongjun Gao}
\affil[$\ssymbol{2}$]{  School of Mathematical Sciences, Nanjing Normal University, 210023, Nanjing, China}
\affil[$\ssymbol{2}$]{  School of mathematics, Southeast University, 211189,  Nanjing, China}
\numberwithin{equation}{section}
\newtheorem{theorem}{Theorem}[section] %
\newtheorem{lemma}{Lemma}[section] %
\newtheorem{corollary}{Corollary}[section] %
\newtheorem{definition}{Definition}[section] %
\newtheorem{remark}{Remark}[section] %
\begin{document}

\maketitle

\begin{abstract}
In this paper, we consider a class of evolution equations driven by finite-dimensional $\gamma$-H\"{o}lder rough paths, where $\gamma\in(1/3,1/2]$. We prove the global-in-time solutions of rough evolution equations(REEs) in a sutiable space, also obtain that the solutions generate random dynamical systems. Meanwhile, we derive the existence of local unstable manifolds for such equations by a properly discretized Lyapunov-Perron method.
\end{abstract}

{\bf 2020 Mathematics Subject Classification:} 60H15, 60H05, 37H10, 37D10

{\bf Keywords}: Rough evolution equations, Random dynamical system, Unstable manifolds, Lyapunov-Perron method.
\section{Introduction}
Invariant manifolds are one of the cornerstones of nonlinear dynamical systems and have been widely studied in deterministic systems. However, in many applications, nonlinear dynamical systems are affected by noise. Invariant manifolds have been researched for stochastic ordinary differential equations(SDEs)(see \cite{MR1723992}, \cite{MR1022628}, \cite{MR1178953} et al) and stochastic partial differential equations(SPDEs) (see Chen et al \cite{MR3376184},\cite{MR2016614}, \cite{MR2110052}, \cite{MR2593602} et al ). A key difficulty in studying invariant manifolds of a stochastic partial differential equation is to prove that it generates a stochastic dynamic system. As we all known that a large class of partial differential equations with stationary random coefficients and It\^{o} stochastic ordinary differential equations generate random dynamical systems (see Arnold \cite{MR1723992}). Nevertheless, for stochastic partial differential equations driven by the standard Brownian motion, it is unknown that how to obtain random dynamical systems. The reasons are: (\romannumeral1) the stochastic integral is only defined almost surely where the exceptional set may depend on the initial state; (\romannumeral2) Kolmogorov's theorem is only true for finite dimensional random fields. However, there are some results for additive and linear multiplicative noise(see \cite{MR2016614},\cite{MR2110052}, \cite{MR3289240} et al).

A way to obtain a random dynamical system from a stochastic differential equation is that this equation is driven by $\gamma$-H\"{o}lder continuous path. In this sense, there are two techniques of defining the stochastic integral that are in pathwise sense. For $\gamma>1/2$, these integrals are consistent with the well-known Young integral (see Young \cite{MR1555421} and Z\"{a}hle \cite{MR1640795}). One of the techniques is to define the integral based on fractional derivatives.  There are already some investigations which have proven that the (pathwise) solutions driven by fractional Brownian motion with $\gamma>1/2$ generate random dynamic systems, obtained that the existence of random attractors and invariant manifolds that describe the longtime behaviors of the solutions (see Chen et al \cite{MR3072986}, Gao et al \cite{MR3226746}, Garrido-Atienza et al \cite{MR2660869}, \cite{MR2593602}). For $1/3<\gamma\leq1/2$, Garrido-Atienza et al \cite{MR3479690} have obtained random dynamical systems for stochastic evolution equations driven by multiplicative fractional Brownian noise; Another one is to interpret integral in the rough path sense. Rough path theory (see \cite{MR4174393}, \cite{MR4040992}, \cite{MR2091358} and \cite{MR1654527}) is close to deterministic analytical methods, Bailleul \cite{MR3330818} analyzed flows driven by rough paths and Bailleul et al \cite{MR3624539} studied random dynamical systems for rough differential equations. Kuehn and Neam\c{t}u \cite{MR4284415} have proven the existence and regularity of local center manifolds for rough differential equations by means of a suitably discretized Lyapunov-Perron-type method. Gubinelli and Tindel \cite{MR2599193} generalised theory of rough paths to solve not only
SDEs but also SPDEs: evolution equations driven by the infinite dimensional Gaussian process. Gerasimovi\v{c}s and Hairer \cite{MR4040992} have developed a robust pathwise local solution theory for a class semilinear SPDEs with multiplicative noise driven by a finite dimensional Wiener process. Hesse and Neam\c{t}u \cite{MR4001062}, \cite{MR4097587} have investigated local, global mild solutions and random dynamical systems for rough partial differential equations. Recently, Hesse and Neam\c{t}u \cite{MR11111} have obtained global-in-time solutions and random dynamical systems for semilinear parabolic rough partial differential equations and Neam\c{t}u and Kuehn \cite{MR11112} have derived the centre manifolds for rough partial differential equations base on the work of \cite{MR11111}.

 However, so far, there are little works relate to unstable manifolds of rough evolution equations. Therefore, in this paper, base on \cite{MR3376184}, \cite{MR2593602}, \cite{MR4040992}, \cite{MR11111}, \cite{MR11112} and \cite{MR4284415}, we are going to study random dynamical systems and local unstable manifolds for (2.2). In order to overcome the obstacle that how to obtain random dynamical system of SPDEs with nonlinear multiplicative noise, similar to \cite{MR11111} and \cite{MR4284415}, we choose a proper  space that is different from [16] and [22], give a simpler proof of local solution for rough evolution equations than [16] and obtain the global solutions, also, we obtain random dynamical systems by using the rough integral developed in \cite{MR4040992} and rough path cocycle \cite{MR3624539}. Meanwhile, we obtain the contraction properties of the Lyapunov-Perron map by using rough path estimates, moreover,  by using properly discretized Lyapunov-Perron method, we derive the existence of local unstable manifolds for these equations.

 This paper is structured as follows. In section 2 we provide background on mildly controlled rough paths and study the global solutions of rough evolution equations. Section 3 is devoted to dynamics of rough evolution equations. In section 4 we derive the existence of local unstable manifolds which are based on a discrete-time Lyapunov-Perron method. Since we work with pathwise integrals, so, at each step, it is necessary to control the norms of the random input on a fixed time-interval. By deriving suitable estimates of the mildly controlled rough integrals, the unstable manifold theory is obtained by employing a random dynamical systems approach. The results obtained for the discrete Lyapunov-Perron map can then be extended to the time-continuous one (further details please refer to \cite{MR2593602}, \cite{MR4284415}).

\section{Rough evolution equations}
Throughout this paper, let $T>0$, we consider a separable Hilbert space $\mathcal{H}$ and $A$ is a generator of analytic $C_{0}$-semigroup $\{S_{t}:t\geq0\}$ on the interpolation space $(\mathcal{H}_{\alpha}=\mbox{Dom}(-A)^{\alpha};\alpha\in\mathbb{R})$. We will use the following fact that for all $\alpha\geq\beta$, ${\gamma\in[0,1]}$ and $u\in{\mathcal{H}}_{\beta}$, one has

\begin{equation}\label{2.1}
\begin{aligned}
 \|S_{t}u\|_{\mathcal{H}_{\alpha}} \leq C_{\beta}t^{\beta-\alpha}\|u\|_{\mathcal{H}_{\beta}},\qquad\|S_{t}u-u\|_{\mathcal{H}_{\beta-\gamma}}\leq C_{\gamma}t^{\gamma}\|u\|_{\mathcal{H}_{\beta}}
\end{aligned}
\end{equation}
uniformly over ${t\in(0,T]}$. For an introduction to semigroup theory, one can refer to \cite{MR710486}.

$\mathbf{Notation}$: We denote $\mathcal{H}^{d}_{\alpha}:=\mathcal{L}(\mathbb{R}^{d},\mathcal{H}_{\alpha})\left(\mathcal{H}^{d\times d}_{\alpha}:=\mathcal{L}(\mathbb{R}^{d}\otimes\mathbb{R}^{d},\mathcal{H}_{\alpha})\right)$ as the space of continuously linear operators from $\mathbb{R}^{d}(\mathbb{R}^{d}\otimes\mathbb{R}^{d})$ to $\mathcal{H}_{\alpha}$. For some fixed $\alpha,\beta\in\mathbb{R}$ and $k\in\mathbb{N}$, we denote $\mathcal{C}^{k}_{\alpha,\beta}(\mathcal{H},\mathcal{H}^{n})$ as the space of $k$-order continuously Fr\'{e}chet-differentiable functions $g:{\mathcal{H}}_{\theta}\rightarrow{\mathcal{H}}^{n}_{\theta+\beta}$ for any $\theta\geq\alpha$, $n\in\mathbb{N}$ with bounded derivatives $D^{i}g$, for all $i=1,\cdot\cdot\cdot,k$. Furthermore, we denote $\mathcal{C}_{\alpha,\beta}(\mathcal{H},\mathcal{H})$ as the space of continuous functions $f:{\mathcal{H}}_{\theta}\rightarrow{\mathcal{H}}_{\theta+\beta}$ for any $\theta\geq\alpha$. $\mathcal{C}_{n}([0,T];V)$ as the space of continuous functions from $\Delta_{n}$ to $V$ where $\Delta_{n}:=\{(t_{1},\cdot\cdot\cdot,t_{n}):T\geq t_{1}\geq \cdot\cdot\cdot \geq t_{n}\geq0\}$ for $n\geq1$ and, for notational simplicity, denote $\mathcal{C}([0,T];V)=\mathcal{C}_{1}([0,T];V)$. $C$ stands for a universal constant which may vary from line to line, the dependence of this constant $C=C_{\cdot,\cdot,\cdot\cdot\cdot}$ on certain parameters will be explicitly stated in subscripts.

In this article, we will consider rough evolution equations
 \begin{equation}\label{2.2}
  \left\{
   \begin{aligned}
     dy_{u}=&\left(Ay_{u}+f(y_{u})\right)du+g(y_{u})d\mathbf{w}_{u},u\in[0,T],\\
     y(0)= &\xi\in\mathcal{H},
  \end{aligned}
 \right.
 \end{equation}
 where we assume:
 \begin{itemize}
   \item $f\in\mathcal{C}_{-2\gamma,0}(\mathcal{H},\mathcal{H})$ is global Lipschitz continuous,
   \item $g\in\mathcal{C}^{3}_{-2\gamma,0}(\mathcal{H},\mathcal{H}^{d})$ and such that $\|g(0)\|_{\mathcal{H}^{d}_{\theta}}=C_{0}$ for $\theta\geq-2\gamma$,
   \item $\mathbf{w}$ is the $\gamma$-H\"{o}lder rough path with $\gamma\in(\frac{1}{3},\frac{1}{2}]$ that will be defined as below.
 \end{itemize}
 The mild solution of \eqref{2.2} can be given by
 \begin{equation}\label{2.3}
\begin{aligned}
 y_{t}=S_{t}\xi+\int^{t}_{0}S_{tu}f(y_{u})du+\int^{t}_{0}S_{tu}g(y_{u})d\mathbf{w}_{u},
\end{aligned}
\end{equation}
where the last integral is rough integral, which is pathwise, will be defined below. From now on, for notational simplicity, we denote $S_{ts}:=S_{t-s}$ for $0\leq s<t\leq T$. In this section, we will prove the global in time solution of \eqref{2.2} and its truncated equation, this is essential for one to consider the invariant manifolds for rough evolution equation.

 First of all, we review some concepts and results on rough path theory, for more details, please refer to  \cite{MR4174393} and \cite{MR4040992}.
 Given a Banach space $V$ endowed with the norm $\|\cdot\|_{V}$, for $h\in\mathcal{C}([0,T];V)$, $p\in\mathcal{C}_{2}([0,T];V)$, let
 $$\delta h_{t,s}=h_{t}-h_{s},\qquad\delta p_{t,u,s}=p_{t,s}-p_{t,u}-p_{u,s}.$$
 $$\hat{\delta} h_{t,s}=h_{t}-S_{ts}h_{s},\qquad\hat{\delta} p_{t,u,s}=p_{t,s}-p_{t,u}-S_{tu}p_{u,s}.$$
Notice that $V$ is one of the spaces in which the action of the semigroup $S$ makes sense.
Then, for $0<\gamma<1$ we set
  $$|h|_{\gamma,V}=\sup_{s,t\in[0,T]}\frac{\|\delta h_{t,s}\|_{V}}{|t-s|^{\gamma}},\qquad\|h\|_{\gamma,V}=\sup_{s,t\in[0,T]}\frac{\|\hat{\delta}h_{t,s}\|_{V}}{|t-s|^{\gamma}} ,\qquad|p|_{\gamma,V}=\sup_{s,t\in[0,T]}\frac{\|p_{t,s}\|_{V}}{|t-s|^{\gamma}}.$$
 Consequently, one can define the spaces as below:
  $$\mathcal{C}^{\gamma}([0,T];V)=\{h\in \mathcal{C}([0,T];V):|h|_{\gamma,V}<\infty\},$$
  $$\mathcal{C}^{\gamma}_{2}([0,T];V)=\{p\in \mathcal{C}_{2}([0,T];V):|p|_{\gamma,V}<\infty\},$$
  $$\hat{\mathcal{C}}^{\gamma}([0,T];V)=\{h\in \mathcal{C}([0,T];V):\|h\|_{\gamma,V}<\infty\}.$$
\begin{remark}\label{remark1}
Since the semigroup $S$ is not H\"{o}lder continuous at $t=0$, hence, from now on, we will choose $\hat{\delta}$ operator and $\hat{\mathcal{C}}^{\gamma}$ type H\"{o}lder spaces for our evolution setting to overcome this obstacle.
\end{remark}
In addition, we endow $\mathcal{C}([0,T];V)$ with the supremum norm $\|h\|_{\infty,V}=\sup_{0\leq t\leq T}\|h_{t}\|_{V}$. For notational simplicity, in the cases of $V=\mathcal{H}_{\alpha}$, $\mathcal{H}^{d}_{\alpha}$ and $\mathcal{H}^{d\times d}_{\alpha}$, we will denote
$|h|_{V}=|h|_{\gamma,\alpha}$,
$\|h\|_{\gamma,V}=\|h\|_{\gamma,\alpha}$, $\|h\|_{\infty,V}=\|h\|_{\infty,\alpha}$.
\begin{definition}\label{Definition2.1}
For $\gamma\in(\frac{1}{3},\frac{1}{2}]$, we define the space of $\gamma$-H\"{o}lder rough paths(over $\mathbb{R}^{d}$) as those pairs $\mathbf{w}=(w,w^{2})\in\mathcal{C}^{\gamma}([0,T];\mathbb{R}^{d})\times\mathcal{C}^{2\gamma}_{2}([0,T];\mathbb{R}^{d}\otimes \mathbb{R}^{d})$ satisfy the Chen's relation, i.e. if for $s\leq u\leq t\in[0,T]$
$$w^{2}_{t,s}-w^{2}_{t,u}-w^{2}_{u,s}=\delta w_{u,s}\otimes \delta w_{t,u}.$$
This space is denoted as $\mathscr{C}^{\gamma}([0,T];\mathbb{R}^{d})$. For two rough paths $\mathbf{w}=(w,w^{2}),\mathbf{\tilde{w}}=(\tilde{w},\tilde{w}^{2})\in\mathscr{C}^{\gamma}([0,T];\mathbb{R}^{d})$, we define the rough metric $\varrho_{\gamma}$ as:
$$\varrho_{\gamma}(\mathbf{w},\mathbf{\tilde{w}})=|w-\tilde{w}|_{\gamma}+|w^{2}-\tilde{w}^{2}|_{2\gamma}.$$
\end{definition}
\begin{definition}\label{Definition2.2}
Let $\mathbf{w}\in\mathscr{C}^{\gamma}([0,T];\mathbb{R}^{d})$, for some $\gamma\in(\frac{1}{3},\frac{1}{2}]$, we call $(y,y^{'})\in\mathcal{\hat{C}}^{\gamma}\left([0,T];\mathcal{H}_{\alpha}\right)\times\mathcal{\hat{C}}^{\gamma}\left([0,T];\mathcal{H}^{ d}_{\alpha}\right)$ a mildly controlled rough path, if the remainder term $R^{y}$ is defined by
\begin{equation}\label{2.4}
\begin{aligned}
R^{y}_{t,s}=\hat{\delta}y_{t,s}-S_{ts}y^{'}_{s}\delta w_{t,s},\quad\mbox{for}\quad s\leq t\in[0,T],
\end{aligned}
\end{equation}
which belongs to $\mathcal{C}^{2\gamma}_{2}\left([0,T];\mathcal{H}_{\alpha}\right)$, then we call $y^{'}$ mildly Gubinelli derivative of $y$ and denote $(y,y^{'})\in\mathscr{D}^{2\gamma}_{S,w}\left([0,T];\mathcal{H}_{\alpha}\right)$.
\end{definition}
Notice that, when one replaces $\mathcal{H}_{\alpha}$ by $\mathcal{H}^{ d}_{\alpha}$, the above definition is also true.
Meanwhile, a seminorm on this space is defined as
$$\|y,y^{'}\|_{w,2\gamma,\alpha}=\|y^{'}\|_{\gamma,\alpha}+|R^{y}|_{2\gamma,\alpha}.$$
The norm of $\mathscr{D}^{2\gamma}_{S,w}\left([0,T];\mathcal{H}_{\alpha}\right)$ is defined as
$$\|y,y^{'}\|_{\mathscr{D}^{2\gamma}_{S,w}}=\|y_{0}\|_{\mathcal{H}_{\alpha}}+\|y^{'}_{0}\|_{\mathcal{H}^{ d}_{\alpha}}+\|y,y^{'}\|_{w,2\gamma,\alpha}.$$
\begin{remark}\label{remark6}
\cite{MR11112} have used controlled rough path given in \cite{MR11111} which is different from the one we use. Here we incorporate semigroup into the definition of controlled rough path as in \cite{MR4040992}.
\end{remark}
 According to \eqref{2.4}, one can easily derive that
\begin{equation}\label{2.5}
\begin{aligned}
\|y\|_{\gamma,\alpha}&\leq|R^{y}|_{2\gamma,\alpha}T^{\gamma}+\|y^{'}\|_{\infty,\alpha}|w|_{\gamma}\leq (1+|w|_{\gamma})(\|y^{'}_{0}\|_{\mathcal{H}^{d}_{\alpha}}+\|y,y^{'}\|_{w,2\gamma,\alpha}T^{\gamma}).
\end{aligned}
\end{equation}
Furthermore, given a mildly controlled rough path, one can define the rough integral as below:
\begin{theorem}\label{Theorem2.1}
Let $T>0$ and $\mathbf{w}\in\mathscr{C}^{\gamma}([0,T];\mathbb{R}^{d})$ for some $\gamma\in(\frac{1}{3},\frac{1}{2}]$. Let $(y,y^{'})\in\mathscr{D}^{2\gamma}_{S,w}([0,T];\mathcal{H}^{d}_{\alpha})$. Furthermore, $\mathcal{P}$ stands for a partition of $[0,T]$. Then the integral defined as
\begin{equation}\label{2.6}
\begin{aligned}
\int^{t}_{s}S_{tu}y_{u}d\mathbf{w}_{u}:=\lim\limits_{|\mathcal{P}|\rightarrow0}\sum_{[u,v]\in\mathcal{P}}S_{tu}(y_{u}\delta w_{v,u}+y^{'}_{u}w^{2}_{v,u})
\end{aligned}
\end{equation}
exists as an element of $\mathcal{\hat{C}}^{\gamma}([0,T];\mathcal{H}_{\alpha})$ and satisfies that for every $0\leq\beta<3\gamma$ we have
\begin{equation}\label{2.7}
\begin{aligned}
&\left\|\int^{t}_{s}S_{tu}y_{u}d\mathbf{w}_{u}-S_{ts}y_{s}\delta w_{t,s}-S_{ts}y^{'}_{s}w^{2}_{t,s}\right\|_{\mathcal{H}_{\alpha+\beta}}\\
&\lesssim(|R^{y}|_{2\gamma,\alpha}|w|_{\gamma}+\|y^{'}\|_{\gamma,\alpha}|w^{2}|_{2\gamma})|t-s|^{3\gamma-\beta}.
\end{aligned}
\end{equation}
Moreover, the map
$$(y,y^{'})\rightarrow(z,z^{'}):=\left(\int^{\cdot}_{0}S_{\cdot u}y_{u}d\mathbf{w}_{u},y\right)$$
is a continuous linear map from $\mathscr{D}^{2\gamma}_{S,w}([0,T];\mathcal{H}^{d}_{\alpha})$ to $\mathscr{D}^{2\gamma}_{S,w}([0,T];\mathcal{H}_{\alpha})$.
here the underlying constant depends on $\gamma$, $d$ and $T$ and can be chosen uniformly over $T\in(0,1]$.
\end{theorem}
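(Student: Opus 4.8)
The plan is to obtain \eqref{2.6}--\eqref{2.7} from a semigroup-adapted (mild) sewing lemma, in the spirit of Gubinelli--Tindel and \cite{MR4040992}, \cite{MR4174393}. The starting point is the \emph{germ}
$$\Xi_{t,s}:=S_{ts}y_{s}\delta w_{t,s}+S_{ts}y^{'}_{s}w^{2}_{t,s},$$
whose summand over a partition $\mathcal{P}$ is exactly the right-hand side of \eqref{2.6}, and which satisfies $\|\Xi_{t,s}\|_{\mathcal{H}_{\alpha}}\lesssim|t-s|^{\gamma}$. The first step is to compute the defect $\hat{\delta}\Xi_{t,u,s}=\Xi_{t,s}-\Xi_{t,u}-S_{tu}\Xi_{u,s}$. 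Using $S_{tu}S_{us}=S_{ts}$, the increment identity $\delta w_{t,s}-\delta w_{u,s}=\delta w_{t,u}$, the definition \eqref{2.4} of $R^{y}$, and Chen's relation $w^{2}_{t,s}-w^{2}_{t,u}-w^{2}_{u,s}=\delta w_{u,s}\otimes\delta w_{t,u}$, the ``pure increment'' contributions cancel and one is left with
$$\hat{\delta}\Xi_{t,u,s}=-S_{tu}R^{y}_{u,s}\delta w_{t,u}-S_{tu}\,\hat{\delta}y^{'}_{u,s}\,w^{2}_{t,u}.$$
Since $R^{y}\in\mathcal{C}^{2\gamma}_{2}([0,T];\mathcal{H}^{d}_{\alpha})$ and $y^{'}\in\hat{\mathcal{C}}^{\gamma}([0,T];\mathcal{H}^{d\times d}_{\alpha})$, using only boundedness of $S$ on $\mathcal{H}_{\alpha}$ this yields
$$\|\hat{\delta}\Xi_{t,u,s}\|_{\mathcal{H}_{\alpha}}\lesssim\big(|R^{y}|_{2\gamma,\alpha}|w|_{\gamma}+\|y^{'}\|_{\gamma,\alpha}|w^{2}|_{2\gamma}\big)|t-s|^{3\gamma},$$
with exponent $3\gamma>1$; the structural point to keep is that every term carries the smoothing factor $S_{tu}$.

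Second, I would run the dyadic sewing iteration: for the dyadic partitions $\mathcal{P}_{n}$ of $[s,t]$ set $\mathcal{M}^{\mathcal{P}_{n}}\Xi_{t,s}:=\sum_{[u,v]\in\mathcal{P}_{n}}S_{tu}\Xi_{v,u}$; passing from $\mathcal{P}_{n}$ to $\mathcal{P}_{n+1}$ changes this by $-\sum_{k}S_{tu^{n}_{k}}\hat{\delta}\Xi_{v^{n}_{k},m^{n}_{k},u^{n}_{k}}$, a sum over the $2^{n}$ dyadic intervals of length $\sim2^{-n}|t-s|$. To estimate these increments \emph{in the stronger norm} $\mathcal{H}_{\alpha+\beta}$, I would insert the smoothing bound \eqref{2.1}, $\|S_{r}v\|_{\mathcal{H}_{\alpha+\beta}}\lesssim r^{-\beta}\|v\|_{\mathcal{H}_{\alpha}}$, combined with the $\mathcal{H}_{\alpha}$-estimate on $\hat{\delta}\Xi$ above; summing over the $2^{n}$ intervals is harmless because $\beta<3\gamma\leq3/2$, and the remaining geometric series in $n$ converges because $3\gamma>1$ — this is precisely where the threshold $\gamma>1/3$ enters. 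The outcome is that $\mathcal{M}^{\mathcal{P}_{n}}\Xi_{t,s}$ converges, the limit being independent of the chosen partition sequence as a consequence of the additivity $\mathcal{I}_{t,s}=\mathcal{I}_{t,u}+S_{tu}\mathcal{I}_{u,s}$, and gives the object \eqref{2.6} together with the bound $\|\mathcal{I}_{t,s}-\Xi_{t,s}\|_{\mathcal{H}_{\alpha+\beta}}\lesssim(\cdots)|t-s|^{3\gamma-\beta}$, i.e.\ \eqref{2.7}. Taking $\beta=0$ and writing $z_{t}:=\int_{0}^{t}S_{tu}y_{u}d\mathbf{w}_{u}$, one gets $\hat{\delta}z_{t,s}=\mathcal{I}_{t,s}$ and $\|\hat{\delta}z_{t,s}\|_{\mathcal{H}_{\alpha}}\lesssim|t-s|^{\gamma}$, so $z\in\hat{\mathcal{C}}^{\gamma}([0,T];\mathcal{H}_{\alpha})$; all constants can be tracked to be uniform for $T\in(0,1]$ because those in \eqref{2.1} and in the series bounds are.

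Third, I would check that $(z,z^{'}):=(z,y)$ is a mildly controlled rough path and that the map is continuous and linear. Indeed \eqref{2.7} with $\beta=0$ gives $R^{z}_{t,s}=\hat{\delta}z_{t,s}-S_{ts}z^{'}_{s}\delta w_{t,s}=S_{ts}y^{'}_{s}w^{2}_{t,s}+O(|t-s|^{3\gamma})$, whence $|R^{z}|_{2\gamma,\alpha}<\infty$ and $z^{'}=y\in\hat{\mathcal{C}}^{\gamma}([0,T];\mathcal{H}^{d}_{\alpha})$, so $(z,z^{'})\in\mathscr{D}^{2\gamma}_{S,w}([0,T];\mathcal{H}_{\alpha})$; linearity of $(y,y^{'})\mapsto(z,z^{'})$ is immediate from the defining sums \eqref{2.6}, and boundedness in the $\mathscr{D}^{2\gamma}_{S,w}$-norms (hence continuity) follows by collecting the constants in the estimates above together with \eqref{2.5}. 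The main obstacle is the second step: turning the sewing argument into a quantitative statement that produces the \emph{sharp} exponent $3\gamma-\beta$ in the strengthened space $\mathcal{H}_{\alpha+\beta}$ while taming the singularity of $S$ at $0$ through the $\hat{\delta}$-calculus and the $\hat{\mathcal{C}}^{\gamma}$-type spaces — that is, establishing the mild sewing lemma in the exact form needed here, rather than merely constructing the integral in a single fixed space.
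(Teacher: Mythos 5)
Your proposal is correct, and it is essentially the argument behind this theorem: the paper itself states Theorem \ref{Theorem2.1} without proof, importing it from Gerasimovi\v{c}s--Hairer \cite{MR4040992}, where the proof is exactly the mild sewing scheme you describe — the germ $S_{ts}y_{s}\delta w_{t,s}+S_{ts}y^{'}_{s}w^{2}_{t,s}$, the defect identity $\hat{\delta}\Xi_{t,u,s}=-S_{tu}R^{y}_{u,s}\delta w_{t,u}-S_{tu}\hat{\delta}y^{'}_{u,s}w^{2}_{t,u}$ via Chen's relation, and a semigroup-adapted sewing lemma in which the smoothing bound \eqref{2.1} trades $\beta$ powers of regularity for $|t-s|^{-\beta}$, with $3\gamma>1$ and $\beta<3\gamma$ making the dyadic sums converge. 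Your computation of the defect and the subsequent identification of $(z,z^{'})=(\int_{0}^{\cdot}S_{\cdot u}y_{u}d\mathbf{w}_{u},y)$ as a mildly controlled rough path match that treatment, so there is nothing to correct beyond fully writing out the mild sewing lemma you invoke.
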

  In our case, one need to consider a suitable class of nonlinearities integrands, base on Lemma 3.14 of \cite{MR4040992}, we consider mildly controlled rough path compose with regular functions as follows, since the proof is identical to the one of Lemma 3.7 of \cite{MR4040992}, we omit it here.
\begin{lemma}\label{Lemma2.1}
Let $g\in \mathcal{C}^{2}_{\alpha,0}(\mathcal{H},\mathcal{H}^{d})$, $T>0$ and $(y,y^{'})\in\mathscr{D}^{2\gamma}_{S,w}([0,T];\mathcal{H}_{\alpha})$, for some $\mathbf{w}\in\mathscr{C}^{\gamma}([0,T];\mathbb{R}^{d})$, $\gamma\in(1/3,1/2]$. Moreover, suppose $y\in \mathcal{\hat{C}}^{\eta}([0,T];\mathcal{H}_{\alpha+2\gamma})$, $\eta\in[0,1]$ and $y^{'}\in L^{\infty}([0,T];\mathcal{H}^{ d}_{\alpha+2\gamma})$. Define $(z_{t},z^{'}_{t})=(g(y_{t}),Dg(y_{t})y^{'}_{t})$, then, $(z,z^{'})\in\mathscr{D}^{2\gamma}_{S,w}([0,T];\mathcal{H}^{d}_{\alpha})$ and satisfies the bound
\begin{equation}\label{2.8}
\begin{aligned}
&\|z,z^{'}\|_{w,2\gamma,\alpha}\\
&\leq C_{g,T}(1+|w|_{\gamma})^{2}(1+\|y^{'}_{0}\|_{\mathcal{H}^{d}_{\alpha}}+\|y,y^{'}\|_{w,2\gamma,\alpha})\\
&\quad\cdot(1+\|y_{0}\|_{\mathcal{H}_{\alpha+2\gamma}}+\|y^{'}_{0}\|_{\mathcal{H}^{d}_{\alpha}}+\|y\|_{\eta,\alpha+2\gamma}+\|y^{'}\|_{\infty,\alpha+2\gamma}+\|y,y^{'}\|_{w,2\gamma,\alpha}).
\end{aligned}
\end{equation}
The constant $C_{g,T}$ depends on $g$ and the bounds of its derivatives, meanwhile, it depends on time $T$, but can be chosen uniformly over $T\in(0,1]$.
\end{lemma}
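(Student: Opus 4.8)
The plan is to estimate directly the seminorm $\|z,z'\|_{w,2\gamma,\alpha}=\|z'\|_{\gamma,\alpha}+|R^{z}|_{2\gamma,\alpha}$, after which the full norm on $\mathscr{D}^{2\gamma}_{S,w}$ follows by adding $\|z_{0}\|=\|g(y_{0})\|$ and $\|z_{0}'\|=\|Dg(y_{0})y_{0}'\|$, which are controlled by $\|g(0)\|$, $\|Dg\|_{\infty}$, $\|y_{0}\|$ and $\|y_{0}'\|$. Throughout I would use without further comment the a priori bound \eqref{2.5}, the two forms $\|S_{ts}u\|_{\mathcal{H}_{\alpha}}\lesssim\|u\|_{\mathcal{H}_{\alpha}}$ and $\|(\mathrm{Id}-S_{ts})u\|_{\mathcal{H}_{\alpha}}\lesssim|t-s|^{2\gamma}\|u\|_{\mathcal{H}_{\alpha+2\gamma}}$ of \eqref{2.1} (legitimate since $2\gamma\in(2/3,1]$), the boundedness of $Dg$ and $D^{2}g$ coming from $g\in\mathcal{C}^{2}_{\alpha,0}$ — hence the Lipschitz property of $u\mapsto Dg(u)$ and the boundedness of $Dg(u)$ on each $\mathcal{H}_{\theta}$, $\mathcal{H}^{d}_{\theta}$ with $\theta\geq\alpha$ — and the pointwise bounds $\|y_{t}\|_{\mathcal{H}_{\alpha+2\gamma}}\leq C(\|y_{0}\|_{\mathcal{H}_{\alpha+2\gamma}}+T^{\eta}\|y\|_{\eta,\alpha+2\gamma})$ and $\|y'_{t}\|_{\mathcal{H}^{d}_{\alpha+2\gamma}}\leq\|y'\|_{\infty,\alpha+2\gamma}$ that the extra regularity hypotheses on $(y,y')$ provide. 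The argument mirrors the template of Lemma 3.7 in \cite{MR4040992}, the only genuinely extra work being the control of the non-commutation of $g$ with the semigroup.

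For the Gubinelli derivative I split $\hat{\delta}z'_{t,s}=(z'_{t}-z'_{s})+(\mathrm{Id}-S_{ts})z'_{s}$, write $z'_{t}-z'_{s}=(Dg(y_{t})-Dg(y_{s}))y'_{t}+Dg(y_{s})(y'_{t}-y'_{s})$, and control $\|Dg(y_{t})-Dg(y_{s})\|$, $\|y_{t}-y_{s}\|_{\mathcal{H}_{\alpha}}$ and $\|y'_{t}-y'_{s}\|_{\mathcal{H}^{d}_{\alpha}}$ by $|t-s|^{\gamma}$ using $\|\hat{\delta}y_{t,s}\|_{\mathcal{H}_{\alpha}}\leq|t-s|^{\gamma}\|y\|_{\gamma,\alpha}$, its analogue for $y'$, and the $\mathcal{H}_{\alpha+2\gamma}$-smoothing; the term $(\mathrm{Id}-S_{ts})z'_{s}$ is $\lesssim|t-s|^{2\gamma}\|z'_{s}\|_{\mathcal{H}^{d}_{\alpha+2\gamma}}\lesssim|t-s|^{2\gamma}\|y'_{s}\|_{\mathcal{H}^{d}_{\alpha+2\gamma}}$ by smoothing and boundedness of $Dg$ into $\mathcal{H}^{d}_{\alpha+2\gamma}$. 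For the remainder I start from $R^{z}_{t,s}=g(y_{t})-S_{ts}g(y_{s})-S_{ts}z'_{s}\delta w_{t,s}$, use $y_{t}=S_{ts}y_{s}+\hat{\delta}y_{t,s}$, Taylor-expand $g$ at the base point $S_{ts}y_{s}$, insert $\hat{\delta}y_{t,s}=S_{ts}y'_{s}\delta w_{t,s}+R^{y}_{t,s}$, and rearrange into
\begin{equation*}
\begin{aligned}
R^{z}_{t,s}={}&\big(g(S_{ts}y_{s})-S_{ts}g(y_{s})\big)+\big(Dg(S_{ts}y_{s})-Dg(y_{s})\big)S_{ts}y'_{s}\delta w_{t,s}+[Dg(y_{s}),S_{ts}]\,y'_{s}\delta w_{t,s}\\
&+Dg(S_{ts}y_{s})R^{y}_{t,s}+\int_{0}^{1}(1-\theta)D^{2}g\big(S_{ts}y_{s}+\theta\hat{\delta}y_{t,s}\big)[\hat{\delta}y_{t,s},\hat{\delta}y_{t,s}]\,d\theta.
\end{aligned}
\end{equation*}
Each of the five terms has to be shown to be $O(|t-s|^{2\gamma})$: the first from $\|g(S_{ts}y_{s})-g(y_{s})\|\lesssim\|(\mathrm{Id}-S_{ts})y_{s}\|_{\mathcal{H}_{\alpha}}$ together with $\|(\mathrm{Id}-S_{ts})g(y_{s})\|\lesssim|t-s|^{2\gamma}\|g(y_{s})\|_{\mathcal{H}^{d}_{\alpha+2\gamma}}$ (using that $g$ maps $\mathcal{H}_{\alpha+2\gamma}$ into $\mathcal{H}^{d}_{\alpha+2\gamma}$); the second from Lipschitzness of $Dg$ and $\|(\mathrm{Id}-S_{ts})y_{s}\|_{\mathcal{H}_{\alpha}}\lesssim|t-s|^{2\gamma}$; the fourth from $\|R^{y}_{t,s}\|\leq|R^{y}|_{2\gamma,\alpha}|t-s|^{2\gamma}$ with $Dg$ bounded; the last from $\|\hat{\delta}y_{t,s}\|_{\mathcal{H}_{\alpha}}^{2}\lesssim\|y\|_{\gamma,\alpha}^{2}|t-s|^{2\gamma}$.

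The step I expect to be the real obstacle is the commutator term $[Dg(y_{s}),S_{ts}]\,y'_{s}\delta w_{t,s}$, since a crude bound of either half yields only order $|t-s|^{\gamma}$. My plan for it is to write $[Dg(y_{s}),S_{ts}]=[Dg(y_{s}),S_{ts}-\mathrm{Id}]=Dg(y_{s})(S_{ts}-\mathrm{Id})-(S_{ts}-\mathrm{Id})Dg(y_{s})$ and to harvest two separate powers of $|t-s|^{\gamma}$: one from $\|(S_{ts}-\mathrm{Id})v\|_{\mathcal{H}_{\alpha}}\lesssim|t-s|^{\gamma}\|v\|_{\mathcal{H}_{\alpha+\gamma}}$ and one from $\|y'_{s}\delta w_{t,s}\|_{\mathcal{H}_{\alpha+\gamma}}\leq\|y'_{s}\|_{\mathcal{H}^{d}_{\alpha+\gamma}}|w|_{\gamma}|t-s|^{\gamma}$, combined with the embedding $\mathcal{H}_{\alpha+2\gamma}\hookrightarrow\mathcal{H}_{\alpha+\gamma}$ (and the same with $Dg(y_{s})v$ in place of $v$); this is exactly where the hypotheses $y'\in L^{\infty}([0,T];\mathcal{H}^{d}_{\alpha+2\gamma})$ and $y\in\hat{\mathcal{C}}^{\eta}([0,T];\mathcal{H}_{\alpha+2\gamma})$ are used in an essential way. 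Assembling the bounds of the three steps, inserting \eqref{2.5} and the pointwise $\mathcal{H}_{\alpha+2\gamma}$-bounds on $y,y'$, and regrouping the result as a product of the two bracketed factors yields \eqref{2.8}; the constant depends only on $g$, $\|Dg\|_{\infty}$, $\|D^{2}g\|_{\infty}$ and $T$, and can be taken uniform over $T\in(0,1]$ because every occurrence of $T$ carries a strictly positive power.
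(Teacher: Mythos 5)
Your proposal is correct and follows essentially the argument the paper itself points to (it omits the proof, citing Lemma 3.7 of \cite{MR4040992}, whose mechanism also reappears in the paper's own decomposition of $R^{g(y)}_{t,s}$ in the proof of Lemma \ref{Lemma2.3}): Taylor expansion of $g$, insertion of $\hat{\delta}y_{t,s}=S_{ts}y'_{s}\delta w_{t,s}+R^{y}_{t,s}$, and absorption of the semigroup commutator terms via the smoothing estimate \eqref{2.1} together with the extra $\mathcal{H}_{\alpha+2\gamma}$-regularity of $(y,y')$. The only difference is cosmetic: you expand at the base point $S_{ts}y_{s}$, whereas the cited proof (and the paper's Lemma \ref{Lemma2.3} computation) expands at $y_{s}$ and carries the resulting terms $(S_{ts}-I)g(y_{s})$ and $Dg(y_{s})(S_{ts}-I)y_{s}$ explicitly; the estimates are the same.
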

According to Lemma \ref{Lemma2.1}, the composition with regular functions needs greater spatial regularity conditions for mildly controlled rough path. Hence, in our evolution setting, in order to study the global in time solutions of \eqref{2.2} in a suitable space, as in \cite{MR4040992}, we need the following space:
$$ \mathscr{D}^{2\gamma,\beta,\eta}_{S,w}([0,T]);\mathcal{H}_{\alpha})=\mathscr{D}^{2\gamma}_{S,w}([0,T];\mathcal{H}_{\alpha})\cap(\mathcal{\hat{C}}^{\eta}([0,T];\mathcal{H}_{\alpha+\beta})\times L^{\infty}([0,T];\mathcal{H}^{d}_{\alpha+\beta})),
$$
where $\beta\in\mathbb{R}$ and $\eta\in[0,1]$.
 Let $(y,y^{'})\in\mathscr{D}^{2\gamma,\beta,\eta}_{S,w}([0,T]);\mathcal{H}_{\alpha})$, the seminorm of this space is defined as:
$$\|y,y^{'}\|_{w,2\gamma,\beta,\eta}=\|y\|_{\eta,\alpha+\beta}+\|y^{'}\|_{\infty,\alpha+\beta}+\|y,y^{'}\|_{w,2\gamma,\alpha}.$$
The norm of this space is defined as below:
$$\|y,y^{'}\|_{\mathscr{D}^{2\gamma,\beta,\eta}_{w}}=\|y_{0}\|_{\mathcal{H}_{\alpha+\beta}}+\|y^{'}_{0}\|_{\mathcal{H}^{d}_{\alpha}}+\|y\|_{\eta,\alpha+\beta}+\|y^{'}\|_{\infty,\alpha+\beta}+\|y,y^{'}\|_{w,2\gamma,\alpha}.$$
Moreover, we will denote $\mathcal{\hat{C}}^{0}=\mathcal{C}$ for $\eta=0$.


Furthermore, from Lemma 2.1, we know that composition with regular functions maps $\mathscr{D}^{2\gamma,2\gamma,\eta}_{S,w}([0,T];\mathcal{H}_{\alpha})$ to $\mathscr{D}^{2\gamma,2\gamma,0}_{S,w}([0,T];\mathcal{H}^{d}_{\alpha})$, for $\eta\in[0,1]$.
  For notational simplicity, we denote
$$\mathcal{D}^{2\gamma,\eta}_{w}([0,T];\mathcal{H}_{\alpha}):=\mathscr{D}^{2\gamma,2\gamma,\eta}_{S,w}([0,T];\mathcal{H}_{\alpha-2\gamma}),\qquad 0\leq\eta<\gamma,$$
the seminorm and norm of $\mathcal{D}^{2\gamma,\eta}_{w}([0,T];\mathcal{H}_{\alpha})$ are respectively denoted as
$\|\cdot,\cdot\|_{w,2\gamma,2\gamma,\eta}$ and $\|\cdot,\cdot\|_{\mathcal{D}^{2\gamma,\eta}_{w}}$.
\begin{remark}\label{remark11}
   Notice that, as in \cite{MR4040992}, for notational simplicity, the authors have denoted $\mathcal{D}^{2\gamma}_{w}([0,T];\mathcal{H}_{\alpha}):=\mathscr{D}^{2\gamma,2\gamma,\gamma}_{S,w}([0,T];\mathcal{H}_{\alpha-2\gamma})$ and considered the solution in $\mathcal{D}^{2\gamma}_{w}([0,T];\mathcal{H})$ which is differ from our case. In our situation, in order to facilitate the study of the global in time solution of \eqref{2.2}, we will choose to consider \eqref{2.2} in the space $\mathcal{D}^{2\gamma,\eta}_{w}([0,T];\mathcal{H})$ which is bigger than the space $\mathcal{D}^{2\gamma}_{w}([0,T];\mathcal{H})$ of \cite{MR4040992}.
\end{remark}


\begin{lemma}\label{Lemma2.2}
Let $T>0$, $g\in\mathcal{ C}^{3}_{-2\gamma,0}(\mathcal{H}, \mathcal{H}^{d})$, $(y,y^{'})\in\mathcal{D}^{2\gamma,\eta}_{w}([0,T];\mathcal{H})$, for some $\mathbf{w}\in\mathscr{C}^{\gamma}([0,T];\mathbb{R}^{d})$ with $\gamma\in(\frac{1}{3},\frac{1}{2}]$. We have
$$\left(\int^{\cdot}_{0}S_{\cdot u}g(y_{u})d\mathbf{w}_{u},g(y)\right)\in\mathcal{D}^{2\gamma,\eta}_{w}([0,T];\mathcal{H})$$
and
\begin{small}
\begin{equation}\label{2.9}
\begin{aligned}
\left\|\int^{\cdot}_{0}S_{\cdot u}g(y_{u})d\mathbf{w}_{u},g(y)\right\|_{\mathcal{D}^{2\gamma,\eta}_{w}}
&\leq C_{\gamma,d,T}(1+|w|_{\gamma}+|w^{2}|_{2\gamma})\|g(y),(g(y))^{'}\|_{\mathscr{D}^{2\gamma,2\gamma,0}_{S,w}},
\end{aligned}
\end{equation}
\end{small}
where the constant $C_{\gamma,d,T}$ depends on $\gamma$, $d$ and $T$ and can be chosen uniformly over $T\in(0,1]$.
\end{lemma}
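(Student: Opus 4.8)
The plan is to realise the operator in \eqref{2.9} as the composition of Lemma \ref{Lemma2.1}, which handles the nonlinearity $g$, with the integration map of Theorem \ref{Theorem2.1}, which handles the rough convolution $\int_0^{\cdot}S_{\cdot u}(\,\cdot\,)\,d\mathbf{w}_u$, and then to add one further ingredient — parabolic smoothing — in order to recover the spatial level $\mathcal{H}$ (rather than merely $\mathcal{H}_{-2\gamma}$) that is built into $\mathcal{D}^{2\gamma,\eta}_{w}([0,T];\mathcal{H})=\mathscr{D}^{2\gamma,2\gamma,\eta}_{S,w}([0,T];\mathcal{H}_{-2\gamma})$. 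First I would note that, by the definition of this space, $(y,y^{'})\in\mathscr{D}^{2\gamma}_{S,w}([0,T];\mathcal{H}_{-2\gamma})$ with $y\in\hat{\mathcal{C}}^{\eta}([0,T];\mathcal{H})$ and $y^{'}\in L^{\infty}([0,T];\mathcal{H}^{d})$. Since $g\in\mathcal{C}^{3}_{-2\gamma,0}\subset\mathcal{C}^{2}_{-2\gamma,0}$, Lemma \ref{Lemma2.1} with $\alpha=-2\gamma$ (together with the remark following it) applies and yields $(g(y),(g(y))^{'})\in\mathscr{D}^{2\gamma,2\gamma,0}_{S,w}([0,T];\mathcal{H}^{d}_{-2\gamma})$, where $(g(y))^{'}=Dg(y)y^{'}$; in particular this pair lies in $\mathscr{D}^{2\gamma}_{S,w}([0,T];\mathcal{H}^{d}_{-2\gamma})$ and so is an admissible integrand for Theorem \ref{Theorem2.1} at base regularity $\alpha=-2\gamma$.

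Next I would feed this pair into Theorem \ref{Theorem2.1} with $\alpha=-2\gamma$, obtaining $z:=\int_0^{\cdot}S_{\cdot u}g(y_u)\,d\mathbf{w}_u\in\hat{\mathcal{C}}^{\gamma}([0,T];\mathcal{H}_{-2\gamma})$, $(z,z^{'}):=(z,g(y))\in\mathscr{D}^{2\gamma}_{S,w}([0,T];\mathcal{H}_{-2\gamma})$, the additivity identity $\hat{\delta}z_{t,s}=\int_s^{t}S_{tu}g(y_u)\,d\mathbf{w}_u$, and, for every $0\le\beta<3\gamma$,
\begin{equation*}
\bigl\|\hat{\delta}z_{t,s}-S_{ts}g(y_s)\delta w_{t,s}-S_{ts}(g(y))^{'}_s w^{2}_{t,s}\bigr\|_{\mathcal{H}_{-2\gamma+\beta}}\lesssim\bigl(|R^{g(y)}|_{2\gamma,-2\gamma}|w|_{\gamma}+\|(g(y))^{'}\|_{\gamma,-2\gamma}|w^{2}|_{2\gamma}\bigr)|t-s|^{3\gamma-\beta}.
\end{equation*}
Taking $\beta=0$, bounding $\|S_{ts}(g(y))^{'}_s w^{2}_{t,s}\|_{\mathcal{H}_{-2\gamma}}\le C\|(g(y))^{'}\|_{\infty,0}|w^{2}|_{2\gamma}|t-s|^{2\gamma}$ by the uniform boundedness of $S$ on $\mathcal{H}_{-2\gamma}$, and absorbing the residual power $|t-s|^{\gamma}\le T^{\gamma}\le1$, I would control $|R^{z}|_{2\gamma,-2\gamma}$; combined with $\|z^{'}\|_{\gamma,-2\gamma}=\|g(y)\|_{\gamma,-2\gamma}$, which is handled by \eqref{2.5}, this gives $\|z,z^{'}\|_{w,2\gamma,-2\gamma}\lesssim(1+|w|_{\gamma}+|w^{2}|_{2\gamma})\,\|g(y),(g(y))^{'}\|_{\mathscr{D}^{2\gamma,2\gamma,0}_{S,w}}$, linearly in the rough-path norms.

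The remaining — and most delicate — point is to verify the two extra conditions defining $\mathcal{D}^{2\gamma,\eta}_{w}([0,T];\mathcal{H})$, namely $z^{'}=g(y)\in L^{\infty}([0,T];\mathcal{H}^{d})$ and $z\in\hat{\mathcal{C}}^{\eta}([0,T];\mathcal{H})$. The first is routine: splitting $g(y_t)=g(y_0)+\hat{\delta}(g(y))_{t,0}+(S_{t0}-I)g(y_0)$ dominates $\|g(y_t)\|_{\mathcal{H}^{d}_0}$ by $\|g(y),(g(y))^{'}\|_{\mathscr{D}^{2\gamma,2\gamma,0}_{S,w}}$. For the second I would take $\beta=2\gamma$ in the expansion above, so that the remainder is measured at the level $\mathcal{H}_0=\mathcal{H}$ with a factor $|t-s|^{\gamma}$, then write $\hat{\delta}z_{t,s}$ as the sum of that remainder, of $S_{ts}g(y_s)\delta w_{t,s}$ and of $S_{ts}(g(y))^{'}_s w^{2}_{t,s}$, and use $g(y_s)\in\mathcal{H}^{d}_0$ together with the boundedness of $S$ on $\mathcal{H}$ to obtain $\|\hat{\delta}z_{t,s}\|_{\mathcal{H}}\lesssim(|t-s|^{\gamma}+|t-s|^{2\gamma})(1+|w|_{\gamma}+|w^{2}|_{2\gamma})\|g(y),(g(y))^{'}\|_{\mathscr{D}^{2\gamma,2\gamma,0}_{S,w}}$. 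Since $0\le\eta<\gamma$ and $|t-s|\le T\le1$, each power of $|t-s|$ occurring here is $\le T^{(\cdot)}|t-s|^{\eta}\le|t-s|^{\eta}$, which yields the bound on $\|z\|_{\eta,0}$. Assembling the five summands of $\|z,z^{'}\|_{\mathcal{D}^{2\gamma,\eta}_{w}}$ — with $z_0=0$, $z^{'}_0=g(y_0)$, $\|z\|_{\eta,0}$ and $\|z^{'}\|_{\infty,0}$ from this step, and $\|z,z^{'}\|_{w,2\gamma,-2\gamma}$ from the previous one — and observing that every power of $T$ that appears is nonnegative, produces \eqref{2.9} with $C_{\gamma,d,T}$ bounded uniformly over $T\in(0,1]$.

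The hard part, as signalled, is precisely the spatial-smoothing step: one must trade the $2\gamma$ derivatives gained from the parabolic convolution against a loss of time regularity from $\gamma$ down to $\eta$, while keeping the constant uniform as $T\to0$. This is why \eqref{2.7} is used at the borderline exponent $\beta=2\gamma<3\gamma$, and why the hypothesis $\eta<\gamma$ matters: it makes all the residual exponents $\gamma-\eta$ and $2\gamma-\eta$ strictly positive, so that they are absorbable into powers of $T\le1$ rather than causing a blow-up; the bookkeeping of which norm of $(g(y),(g(y))^{'})$ each term reduces to (always a summand of $\|g(y),(g(y))^{'}\|_{\mathscr{D}^{2\gamma,2\gamma,0}_{S,w}}$) is the only other place where care is needed.
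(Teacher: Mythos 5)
Your proposal is correct and follows essentially the same route as the paper's proof: apply the integral estimate \eqref{2.7} at the two spatial levels $\beta=0$ (to control $|R^{z}|_{2\gamma,-2\gamma}$) and $\beta=2\gamma$ (to control $\|z\|_{\eta,0}$ at the level $\mathcal{H}$), bound the leftover $S_{ts}g(y_s)\delta w_{t,s}$ and $S_{ts}(g(y))'_s w^{2}_{t,s}$ terms by semigroup boundedness, treat $z'=g(y)$ via \eqref{2.5}, and absorb the positive powers $T^{\gamma-\eta}$, $T^{2\gamma-\eta}$ uniformly over $T\in(0,1]$. The only cosmetic difference is that you explicitly invoke Lemma \ref{Lemma2.1} to certify the integrand and decompose $g(y_t)$ to bound $\|g(y)\|_{\infty,0}$, steps the paper leaves implicit since they are contained in the norm $\|g(y),(g(y))'\|_{\mathscr{D}^{2\gamma,2\gamma,0}_{S,w}}$ appearing on the right-hand side of \eqref{2.9}.
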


\begin{proof}
 According to \eqref{2.1} and \eqref{2.7} we obtain that
\begin{small}
\begin{equation}
\begin{aligned}\nonumber
\left\|R^{\int^{\cdot}_{0}S_{\cdot u}g(y_{u})d\mathbf{w}_{u}}_{t,s}\right\|_{\mathcal{H}_{-2\gamma}}
&\leq\left\|\int^{t}_{s}S_{tu}g(y_{u})d\mathbf{w}_{u}-S_{ts}g(y_{s})\delta w_{t,s}-S_{ts}(g(y_{s}))^{'}w^{2}_{t,s}\right\|_{\mathcal{H}_{-2\gamma}}\\
&\quad+\|S_{ts}(g(y_{s}))^{'}w^{2}_{t,s}\|_{\mathcal{H}_{-2\gamma}}\\
&\lesssim\left(|R^{g(y)}|_{2\gamma,-2\gamma}|w|_{\gamma}+\|(g(y))^{'}\|_{\gamma,\mathcal{H}^{d\times d}_{-2\gamma}}|w^{2}|_{2\gamma}\right)|t-s|^{3\gamma}\\
&\quad+\|(g(y_{s}))^{'}\|_{\mathcal{H}^{d\times d}_{-2\gamma}}|w^{2}|_{2\gamma}|t-s|^{2\gamma},
\end{aligned}
\end{equation}
\end{small}
then we have
\begin{small}
\begin{equation}
\begin{aligned}\nonumber
\left\|R^{\int^{\cdot}_{0}S_{\cdot u}g(y_{u})d\mathbf{w}_{u}}\right\|_{2\gamma,-2\gamma}
&\lesssim T^{\gamma}(|w|_{\gamma}+|w^{2}|_{2\gamma})\|g(y),(g(y))^{'}\|_{w,2\gamma,-2\gamma}\\
&\quad+|w^{2}|_{2\gamma}\|(g(y))^{'}\|_{\infty,-2\gamma}.
\end{aligned}
\end{equation}
\end{small}
Similarly, we have
\begin{small}
\begin{equation}
\begin{aligned}\nonumber
&\left\|\int^{t}_{s}S_{tu}g(y_{u})d\mathbf{w}_{u}\right\|_{\mathcal{H}}\\
&\leq\left\|\int^{t}_{s}S_{tu}g(y_{u})d\mathbf{w}_{u}-S_{ts}g(y_{s})\delta w_{t,s}-S_{ts}(g(y_{s}))^{'}w^{2}_{t,s}\right\|_{\mathcal{H}}\\
&\quad+\|S_{ts}g(y_{s})\delta w_{t,s}\|_{\mathcal{H}}+\|S_{ts}(g(y_{s}))^{'}w^{2}_{t,s}\|_{\mathcal{H}}\\
&\lesssim\left(|R^{g(y)}|_{2\gamma,-2\gamma}|w|_{\gamma}+\|(g(y))^{'}\|_{\gamma,-2\gamma}|w^{2}|_{2\gamma}\right)|t-s|^{\gamma}\\
&\quad+\|g(y_{s})\|_{\mathcal{H}^{d}}|w|_{\gamma}|t-s|^{\gamma}+\|(g(y_{s}))^{'}\|_{\mathcal{H}^{d\times d}}|w^{2}|_{2\gamma}|t-s|^{2\gamma},
\end{aligned}
\end{equation}
\end{small}
then
\begin{small}
\begin{equation}
\begin{aligned}\nonumber
\left\|\int^{\cdot}_{0}S_{\cdot u}g(y_{u})d\mathbf{w}_{u}\right\|_{\eta,0}
&\leq T^{\gamma-\eta}(|w|_{\gamma}+|w^{2}|_{2\gamma})\|g(y),(g(y))^{'}\|_{w,2\gamma,-2\gamma}\\
&\quad+T^{\gamma-\eta}\|g(y)\|_{\infty,0}|w|_{\gamma}+T^{2\gamma-\eta}\|(g(y))^{'}\|_{\infty,0}|w^{2}|_{2\gamma}.
\end{aligned}
\end{equation}
\end{small}
From \eqref{2.5} one has
$$\|g(y)\|_{\gamma,-2\gamma}\leq(1+|w|_{\gamma})(\|(g(y_{0}))^{'}\|_{\mathcal{H}^{d\times d}_{-2\gamma}}+T^{\gamma}\|g(y),(g(y))^{'}\|_{w,2\gamma,-2\gamma}).$$
Consequently, from above estimates we have
\begin{small}
\begin{equation}\label{2.10}
\begin{aligned}
&\left\|\int^{\cdot}_{0}S_{\cdot u}g(y_{u})d\mathbf{w}_{u},g(y)\right\|_{\mathcal{D}^{2\gamma,\eta}_{w}}\\
&\lesssim(1+|w|_{\gamma}+|w^{2}|_{2\gamma})\|(g(y_{0}))^{'}\|_{\mathcal{H}^{d\times d}_{-2\gamma}}+T^{\gamma}|w^{2}|_{2\gamma}\|(g(y))^{'}\|_{\gamma,-2\gamma}\\
\end{aligned}
\end{equation}
\end{small}
\begin{small}
\begin{equation}\nonumber
\begin{aligned}
&\quad\quad+\|g(y)\|_{\infty,0}+T^{\gamma-\eta}|w|_{\gamma}\|g(y)\|_{\infty,0}+T^{2\gamma-\eta}|w^{2}|_{2\gamma}\|(g(y))^{'}\|_{\infty,0}\\
&\quad\quad+\|g(y_{0})\|_{\mathcal{H}^{d}_{-2\gamma}}+(1+|w|_{\gamma}+|w^{2}|_{2\gamma})T^{\gamma}\|g(y),(g(y))^{'}\|_{w,2\gamma,-2\gamma}\\
&\quad\quad+(|w|_{\gamma}+|w^{2}|_{2\gamma})T^{\gamma-\eta}\|g(y),(g(y))^{'}\|_{w,2\gamma,-2\gamma}.
\end{aligned}
\end{equation}
\end{small}
Finally, using \eqref{2.10}, we easily obtain the desired result.
\end{proof}
\begin{lemma}\label{Lemma2.3}
Let $T>0$, $g\in\mathcal{C}^{3}_{-2\gamma,0}(\mathcal{H}, \mathcal{H}^{d})$, $(y,y^{'})$ and $(v,v^{'})\in\mathcal{D}^{2\gamma,\eta}_{w}([0,T];\mathcal{H})$, for some $\mathbf{w}\in\mathscr{C}^{\gamma}([0,T];\mathbb{R}^{d})$ and there exists $M>0$ such that $|w|_{\gamma}$, $|w^{2}|_{2\gamma}$, $\|y,y^{'}\|_{\mathcal{D}^{2\gamma,\eta}_{w}}$ and $\|v,v^{'}\|_{\mathcal{D}^{2\gamma,\eta}_{w}}\leq M$, then the following estimate holds true
\begin{small}
\begin{equation}\label{2.11}
\begin{aligned}
\|g(y)-g(v),(g(y)-g(v))^{'}\|_{\mathscr{D}^{2\gamma,2\gamma,0}_{S,w}}\leq C_{M,g,T}(1+|w|_{\gamma})^{2}\|y-v,(y-v)^{'}\|_{\mathcal{D}^{2\gamma,\eta}_{w}}.
\end{aligned}
\end{equation}
\end{small}
The constant $C_{M,g,T}$ depends on $M$, $g$ and the bounds of its derivatives. At the same time, it depends on time $T$, but can be chosen uniformly over $T\in(0,1]$.
\end{lemma}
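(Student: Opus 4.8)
The plan is to imitate the proof of Lemma~\ref{Lemma2.1} (equivalently Lemma~3.7 of \cite{MR4040992}) in its \emph{difference} form. Write $(z,z')=(g(y),Dg(y)y')$ and $(\bar z,\bar z')=(g(v),Dg(v)v')$, and set $\phi:=z-\bar z=g(y)-g(v)$, $\phi':=z'-\bar z'=(g(y)-g(v))'=Dg(y)y'-Dg(v)v'$. Since $\hat{\delta}$ is linear, the remainder of $\phi$ satisfies $R^{\phi}=R^{z}-R^{\bar z}$; hence, unravelling the definition of the norm on $\mathscr{D}^{2\gamma,2\gamma,0}_{S,w}([0,T];\mathcal{H}^{d}_{-2\gamma})$, it suffices to estimate each of its components, namely the sup-type terms $\|\phi_{0}\|$, $\|\phi'_{0}\|$, $\|\phi\|_{0,0}$, $\|\phi'\|_{\infty,0}$ and the two controlled-path seminorms $\|\phi'\|_{\gamma,-2\gamma}$ and $|R^{\phi}|_{2\gamma,-2\gamma}$, by the right-hand side of \eqref{2.11}. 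Throughout, every ``undifferenced'' factor — that is $|w|_{\gamma}$, $|w^{2}|_{2\gamma}$, and any of $\|y\|_{\eta,0},\|v\|_{\eta,0},\|y'\|_{\infty,0},\|v'\|_{\infty,0},\|y'\|_{\gamma,-2\gamma},\|v'\|_{\gamma,-2\gamma},|R^{y}|_{2\gamma,-2\gamma},|R^{v}|_{2\gamma,-2\gamma}$, together with the sup-norms $\|y\|_{\infty,0},\|v\|_{\infty,0}$ reconstructed from these and the initial data — is $\le M$ by hypothesis and gets absorbed into $C_{M,g,T}$.

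The sup-type components are immediate from the mean value theorem and boundedness of $Dg,D^{2}g$: $\|\phi_{t}\|_{\mathcal{H}^{d}_{0}}\le\|Dg\|_{\infty}\|y_{t}-v_{t}\|_{\mathcal{H}_{0}}$, and, writing $\phi'_{t}=(Dg(y_{t})-Dg(v_{t}))y'_{t}+Dg(v_{t})(y'_{t}-v'_{t})$, $\|\phi'_{t}\|\le\|D^{2}g\|_{\infty}\|y_{t}-v_{t}\|_{\mathcal{H}_{0}}\|y'_{t}\|+\|Dg\|_{\infty}\|y'_{t}-v'_{t}\|$, where $\|y-v\|_{\infty,0}$ and $\|y'-v'\|_{\infty,0}$ are controlled by $\|y-v,(y-v)'\|_{\mathcal{D}^{2\gamma,\eta}_{w}}$; the $t=0$ and $\|\phi\|_{0,0}$ terms are bounded the same way. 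For $\|\phi'\|_{\gamma,-2\gamma}$ one expands $\hat{\delta}\phi'_{t,s}$ by adding and subtracting mixed terms in the two products $Dg(y)y'$ and $Dg(v)v'$, arranging that every summand contains exactly one difference factor (one of $\hat{\delta}(y-v)_{t,s}$, $(y-v)'_{t}-(y-v)'_{s}$, or $y_{t}-v_{t}$), and then invokes \eqref{2.1}, the mean value estimates $\|Dg(y)-Dg(v)\|\le\|D^{2}g\|_{\infty}\|y-v\|$, $\|D^{2}g(y)-D^{2}g(v)\|\le\|D^{3}g\|_{\infty}\|y-v\|$, and \eqref{2.5} (which converts a H\"older norm of $y-v$ at the higher level $-2\gamma$ into its $\mathcal{D}^{2\gamma,\eta}_{w}$-norm at the cost of one power of $1+|w|_{\gamma}$). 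The appearance of $\|D^{3}g\|_{\infty}$ here already explains why $g\in\mathcal{C}^{3}$, rather than $\mathcal{C}^{2}$ as in Lemma~\ref{Lemma2.1}, is assumed.

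The main work, and the step I expect to be the real obstacle, is $|R^{\phi}|_{2\gamma,-2\gamma}=|R^{z}-R^{\bar z}|_{2\gamma,-2\gamma}$. I would revisit the decomposition of $R^{z}_{t,s}$ used in the proof of Lemma~\ref{Lemma2.1}: splitting $z_{t}-S_{ts}z_{s}=\bigl(g(y_{t})-g(S_{ts}y_{s})\bigr)+\bigl(g(S_{ts}y_{s})-S_{ts}g(y_{s})\bigr)$, performing a first-order Taylor expansion of the first bracket around $S_{ts}y_{s}$, substituting $\hat{\delta}y_{t,s}=S_{ts}y'_{s}\delta w_{t,s}+R^{y}_{t,s}$, and subtracting $S_{ts}z'_{s}\delta w_{t,s}$, one writes $R^{z}_{t,s}$ as a finite sum of terms of the schematic types $Dg(y_{s})R^{y}_{t,s}$, a quadratic Taylor remainder $\int_{0}^{1}(1-\tau)D^{2}g(\cdot)\,d\tau\,(\hat{\delta}y_{t,s},\hat{\delta}y_{t,s})$, a semigroup commutator $\bigl(Dg(y_{s})S_{ts}-S_{ts}Dg(y_{s})\bigr)y'_{s}\delta w_{t,s}$, and the off-diagonal piece $g(S_{ts}y_{s})-S_{ts}g(y_{s})$ — each shown in Lemma~\ref{Lemma2.1} to lie in $\mathcal{C}^{2\gamma}_{2}([0,T];\mathcal{H}^{d}_{-2\gamma})$ with the bound \eqref{2.8}. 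Forming $R^{z}_{t,s}-R^{\bar z}_{t,s}$ and inserting/removing mixed terms (replacing $y$ by $v$ one occurrence at a time), each resulting summand again carries exactly one difference factor — a coefficient difference $Dg(y)-Dg(v)$ or $D^{2}g(y)-D^{2}g(v)$ handled by the mean value theorem and $\|D^{2}g\|_{\infty},\|D^{3}g\|_{\infty}$, or a data difference among $R^{y}-R^{v}$, $\hat{\delta}(y-v)$, $y'-v'$ — times factors bounded by $M$. Tracking the powers of $1+|w|_{\gamma}$ exactly as in Lemma~\ref{Lemma2.1} (the full $(1+|w|_{\gamma})^{2}$ is needed only for the quadratic Taylor term, where each of the two increments $\hat{\delta}y$, $\hat{\delta}v$, $\hat{\delta}(y-v)$ is estimated through \eqref{2.5}; the other terms need fewer powers and are bounded by $(1+|w|_{\gamma})^{2}$ too) and checking, via \eqref{2.1} and the slack afforded by $\beta=2\gamma$ and $0\le\eta<\gamma$, that the semigroup-commutator and off-diagonal differences still carry the factor $|t-s|^{2\gamma}$, one assembles \eqref{2.11}. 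The only genuine difficulty is this bookkeeping of the remainder difference; no idea beyond Lemma~\ref{Lemma2.1} is needed.

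As an alternative to term-by-term differencing, one could apply Lemma~\ref{Lemma2.1} along the segment $y_{\tau}:=v+\tau(y-v)$, $\tau\in[0,1]$ — which is a mildly controlled rough path with Gubinelli derivative $v'+\tau(y'-v')$ — and write $(g(y),(g(y))')-(g(v),(g(v))')=\int_{0}^{1}\tfrac{d}{d\tau}\bigl(g(y_{\tau}),Dg(y_{\tau})y'_{\tau}\bigr)\,d\tau$, where $\tfrac{d}{d\tau}\bigl(g(y_{\tau}),Dg(y_{\tau})y'_{\tau}\bigr)=\bigl(Dg(y_{\tau})(y-v),\,D^{2}g(y_{\tau})(y-v,y'_{\tau})+Dg(y_{\tau})(y-v)'\bigr)$; after checking (by a Lemma~\ref{Lemma2.1}-type computation, using $g\in\mathcal{C}^{3}$) that this $\tau$-derivative is a mildly controlled rough path with $\mathscr{D}^{2\gamma,2\gamma,0}_{S,w}$-norm linear in $\|y-v,(y-v)'\|_{\mathcal{D}^{2\gamma,\eta}_{w}}$ and with the factor $(1+|w|_{\gamma})^{2}$, integrating in $\tau$ yields \eqref{2.11}.
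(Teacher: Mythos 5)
Your proposal follows essentially the same route as the paper's proof: you reduce \eqref{2.11} to the individual components of the $\mathscr{D}^{2\gamma,2\gamma,0}_{S,w}$-norm, handle the sup- and H\"older-type terms by mean value estimates (using $D^{2}g$, $D^{3}g$, hence the $\mathcal{C}^{3}$ hypothesis), and treat $R^{g(y)}-R^{g(v)}$ by differencing a remainder decomposition term by term so that each summand carries exactly one difference factor, with the semigroup bounds \eqref{2.1} supplying the $|t-s|^{2\gamma}$ weight and \eqref{2.5} supplying the $(1+|w|_{\gamma})^{2}$ factor — which is precisely the paper's six-term splitting of $R^{g(y)}_{t,s}-R^{g(v)}_{t,s}$ together with its four-point inequality \eqref{2.12}; your Taylor expansion around $S_{ts}y_{s}$ versus the paper's expansion around $y_{s}$ plus explicit $(S_{ts}-I)$ correction terms is only a cosmetic difference. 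The convexity-segment argument you sketch at the end would be a genuinely different (and arguably cleaner) variant, but as presented it is only an outline and is not needed for the result.
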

\begin{proof}
First of all, we give an inequality which will be used throughout the proof:
for $g\in \mathcal{C}^{3}_{-2\gamma,0}(\mathcal{H}, \mathcal{H}^{d})$, $x_{1}$, $x_{2}$,$x_{3}$, $x_{4}\in\mathcal{H}_{\theta},\theta\geq-2\gamma$, the following bound holds
\begin{small}
\begin{equation}\label{2.12}
\begin{aligned}
&\|g(x_{1})-g(x_{2})-g(x_{3})+g(x_{4})\|_{\mathcal{H}^{d}_{\theta}}\\
&\leq C_{g}\left(\|x_{1}-x_{2}-x_{3}+x_{4}\|_{\mathcal{H}_{\theta}}+(\|x_{1}-x_{3}\|_{\mathcal{H}_{\theta}}+\|x_{2}-x_{4}\|_{\mathcal{H}_{\theta}})\|x_{3}-x_{4}\|_{\mathcal{H}_{\theta}}\right).
\end{aligned}
\end{equation}
\end{small}
Due to
\begin{small}
\begin{equation}
\begin{aligned}
&\|g(y_{t})-g(v_{t})-S_{ts}(g(y_{s})-g(v_{s}))\|_{\mathcal{H}^{d}_{-2\gamma}}\nonumber\\
&\leq\|g(y_{t})-g(v_{t})-(g(y_{s})-g(v_{s}))\|_{\mathcal{H}^{d}_{-2\gamma}}
+\|(S_{ts}-I)(g(y_{s})-g(v_{s}))\|_{\mathcal{H}^{d}_{-2\gamma}}\\
&\leq\|g(y_{t})-g(v_{t})-(g(y_{s})-g(v_{s}))\|_{\mathcal{H}^{d}_{-2\gamma}}
+\|(g(y_{s})-g(v_{s}))\|_{\mathcal{H}^{d}}|t-s|^{2\gamma},
\end{aligned}
\end{equation}
\end{small}
so, we have
$$\|g(y)-g(v)\|_{\gamma,-2\gamma}\leq|g(y)-g(v)|_{\gamma,-2\gamma}+T^{\gamma}\|g(y)-g(v)\|_{\infty,0}.$$
Similarly, we have
$$|g(y)-g(v)|_{\gamma,-2\gamma}\leq\|g(y)-g(v)\|_{\gamma,-2\gamma}+T^{\gamma}\|g(y)-g(v)\|_{\infty,0}.$$
Using \eqref{2.5} and \eqref{2.12}, we derive that
\begin{small}
\begin{equation}
\begin{aligned}
&|g(y)-g(v)|_{\gamma,-2\gamma}\leq C_{g}\left(|y-v|_{\gamma,-2\gamma}+(|y|_{\gamma,-2\gamma}+|v|_{\gamma,-2\gamma})\|y-v\|_{\infty,-2\gamma}\right)\nonumber\\
&\leq C_{g}(\|y-v\|_{\gamma,-2\gamma}+T^{\gamma}\|y-v\|_{\infty,0})\nonumber\\
&\quad+C_{g}(\|y\|_{\gamma,-2\gamma}+T^{\gamma}\|y\|_{\infty,0}+\|v\|_{\gamma,-2\gamma}+T^{\gamma}\|v\|_{\infty,0})\|y-v\|_{\infty,-2\gamma}\nonumber\\
&\leq C_{g}(\|y-v\|_{\gamma,-2\gamma}+T^{\gamma}\|y-v\|_{\infty,0})+C_{g}(1+|w|_{\gamma})(\|y^{'}_{0}\|_{\mathcal{H}^{d}_{-2\gamma}}+T^{\gamma}\|y_{0}\|_{\mathcal{H}}\nonumber\\
&\quad+T^{\gamma+\eta}\|y\|_{\eta,0}+T^{\gamma}\|y,y^{'}\|_{w,2\gamma,-2\gamma}+\|v^{'}_{0}\|_{\mathcal{H}^{d}_{-2\gamma}}+T^{\gamma}\|v_{0}\|_{\mathcal{H}}+T^{\gamma+\eta}\|v\|_{\eta,0}\\
&\quad+T^{\gamma}\|v,v^{'}\|_{w,2\gamma,-2\gamma})\|y-v\|_{\infty,-2\gamma}\\
&\leq C_{g,T,M}(1+|w|_{\gamma})(\|y-v\|_{\gamma,-2\gamma}+\|y-v\|_{\infty,-2\gamma}+T^{\gamma}\|y-v\|_{\infty,0}),
\end{aligned}
\end{equation}
\end{small}
therefore
\begin{small}
$$\|g(y)-g(v)\|_{\gamma,-2\gamma}\leq C_{g,T,M}(1+|w|_{\gamma})(\|y-v\|_{\gamma,-2\gamma}+\|y-v\|_{\infty,-2\gamma}+T^{\gamma}\|y-v\|_{\infty,0}).$$
\end{small}
Similarly, we can obtain that
\begin{small}
\begin{equation}
\begin{aligned}
\|Dg(y)y^{'}-Dy(v)v^{'}\|_{\gamma,-2\gamma}&\leq|Dg(y)y^{'}-Dg(v)v^{'}|_{\gamma,-2\gamma}\nonumber\\
&\quad+\|Dg(y)y^{'}-Dg(v)v^{'}\|_{\infty,0}T^{\gamma},
\end{aligned}
\end{equation}
\end{small}
\begin{small}
\begin{equation}
\begin{aligned}
|Dg(y)y^{'}-Dg(v)v^{'}|_{\gamma,-2\gamma}&\leq|Dg(y)(y^{'}-v^{'})|_{\gamma,-2\gamma}\nonumber\\
&\quad+|(Dg(y)-Dg(v))v^{'}|_{\gamma,-2\gamma}\\
&:=\uppercase\expandafter{\romannumeral1}+\uppercase\expandafter{\romannumeral2}.
\end{aligned}
\end{equation}
\begin{equation}
\begin{aligned}
\uppercase\expandafter{\romannumeral1}&\leq\|Dg(y)\|_{\infty,\mathcal{L}(\mathcal{H}_{-2\gamma}\otimes \mathbb{R}^{d},\mathcal{H}_{-2\gamma})}|y^{'}-v^{'}|_{\gamma,-2\gamma}+|Dg(y)|_{\gamma,\mathcal{L}(\mathcal{H}_{-2\gamma}\otimes \mathbb{R}^{d},\mathcal{H}_{-2\gamma})}\|y^{'}-v^{'}\|_{\infty,-2\gamma}\nonumber\\
&\leq C_{g}(|y^{'}-v^{'}|_{\gamma,-2\gamma}+|y|_{\gamma,-2\gamma}\|y^{'}-v^{'}\|_{\infty,-2\gamma})\nonumber\\
&\leq C_{g,M}(1+|w|_{\gamma})(\|y^{'}-v^{'}\|_{\gamma,-2\gamma}+\|y^{'}-v^{'}\|_{\infty,-2\gamma}+T^{\gamma}\|y^{'}-v^{'}\|_{\infty,0}),
\end{aligned}
\end{equation}
\end{small}
meanwhile, we have
$$\uppercase\expandafter{\romannumeral2}\leq C_{g,M,T}(1+|w|_{\gamma})(\|y-v\|_{\gamma,-2\gamma}+\|y-v\|_{\infty,-2\gamma}+T^{\gamma}\|y-v\|_{\infty,0}),$$
\begin{small}
\begin{equation}
\begin{aligned}\nonumber
\|Dg(y)y^{'}-Dg(v)v^{'}\|_{\infty,0}
&\leq\|Dg(y)(y^{'}-v^{'})\|_{\infty,0}+\|(Dg(y)-Dg(v))v^{'}\|_{\infty,0}\\
&\leq C_{g}(\|y^{'}-v^{'}\|_{\infty,0}+\|y-v\|_{\infty,0}\|v^{'}\|_{\infty,0})\\
&\leq C_{g,M}(\|y^{'}-v^{'}\|_{\infty,0}+\|y-v\|_{\infty,0})
\end{aligned}
\end{equation}
\end{small}
and $$\|y-v\|_{\infty,0}\lesssim T^{\eta}\|y-v\|_{\eta,0}+\|y_{0}-v_{0}\|_{\mathcal{H}},$$
according to above estimates, we obtain
$$\|Dg(y)y^{'}-Dg(v)v^{'}\|_{\gamma,-2\gamma}\leq C_{g,M,T}(1+|w|_{\gamma})\|y-v,(y-v)^{'}\|_{\mathcal{D}^{2\gamma,\eta}_{w}}.$$
Since
\begin{small}
\begin{equation}\nonumber
\begin{aligned}
R^{g(y)}_{t,s}
&=g(y_{t})-g(y_{s})-Dg(y_{s})S_{ts}y^{'}_{s}\delta w_{t,s}+ Dg(y_{s})S_{ts}y^{'}_{s}\delta w_{t,s}-Dg(y_{s})y^{'}_{s}\delta w_{t,s}\nonumber\\
&\quad+Dg(y_{s})y^{'}_{s}\delta w_{t,s}-S_{ts}Dg(y_{s})y^{'}_{s}\delta w_{t,s}+g(y_{s})-S_{ts}g(y_{s})\\
&=g(y_{t})-g(y_{s})-Dg(y_{s})\hat{\delta}y_{t,s}+Dg(y_{s})R^{y}_{t,s}+Dg(y_{s})(S_{ts}-I)y^{'}_{s}\delta w_{t,s}\\
&\quad-(S_{ts}-I)Dg(y_{s})y^{'}_{s}\delta w_{t,s}-(S_{ts}-I)g(y_{s})\\
&=g(y_{t})-g(y_{s})-Dg(y_{s})\delta y_{t,s}+Dg(y_{s})R^{y}_{t,s}+Dg(y_{s})(S_{ts}-I)y^{'}_{s}\delta w_{t,s}\\
&\quad-(S_{ts}-I)Dg(y_{s})y^{'}_{s}\delta w_{t,s}-(S_{ts}-I)g(y_{s})+Dg(y_{s})(S_{ts}-I)y_{s},
\end{aligned}
\end{equation}
\end{small}
hence, we have
\begin{small}
\begin{equation}\label{2.13}
\begin{aligned}
&R^{g(y)}_{t,s}-R^{g(v)}_{t,s}\\
&=g(y_{t})-g(y_{s})-Dg(y_{s})\delta y_{t,s}-(g(v_{t})-g(v_{s})-Dg(v_{s})\delta v_{t,s})\\
&\quad+Dg(y_{s})R^{y}_{t,s}-Dg(v_{s})R^{v}_{t,s}\\
&\quad-(S_{ts}-I)(g(y_{s})-g(v_{s}))\\
&\quad+Dg(y_{s})(S_{ts}-I)y^{'}_{s}\delta w_{t,s}-Dg(v_{s})(S_{ts}-I)v^{'}_{s}\delta w_{t,s}\\
\end{aligned}
\end{equation}
\end{small}
\begin{small}
\begin{equation}\nonumber
\begin{aligned}
&\quad-(S_{ts}-I)\left(Dg(y_{s})y^{'}_{s}\delta w_{t,s}-Dg(v_{s})v^{'}_{s}\delta w_{t,s}\right)\\
&\quad+Dg(y_{s})(S_{ts}-I)y_{s}-Dg(v_{s})(S_{ts}-I)v_{s}\\
&=\romannumeral1+\romannumeral2+\romannumeral3
+\romannumeral4+\romannumeral5+\romannumeral6.
\end{aligned}
\end{equation}
\end{small}
For $\romannumeral1$, applying (44) of \cite{MR4284415}, we have
\begin{small}
\begin{equation}
\begin{aligned}
&\|\romannumeral1\|_{\mathcal{H}^{d}_{-2\gamma}}\nonumber\\
&=\|\int^{1}_{0}\int^{1}_{0}C_{g}[\tau r^{2}(y_{t}-v_{t})+(r-\tau r^{2})(y_{s}-v_{s})]d\tau dr(\delta y_{t,s})^{2}\|_{\mathcal{H}_{-2\gamma}}\nonumber\\
&\quad+\|\int^{1}_{0}\int^{1}_{0}C_{g}[\tau r^{2}v_{t}+(r-\tau r^{2})v_{s}]d\tau dr\left((\delta y_{t,s})^{2}-(\delta v_{t,s})^{2}\right)\|_{\mathcal{H}_{-2\gamma}}\nonumber\\
&\leq C_{g}\|y-v\|_{\infty,-2\gamma}\|(\hat{\delta }y_{t,s}+(S_{ts}-I)y_{s})^{2}\|_{\mathcal{H}_{-2\gamma}}\nonumber\\
&\quad+C_{g}\|v\|_{\infty,-2\gamma}\|\hat{\delta }y_{t,s}+(S_{ts}-I)y_{s}+\hat{\delta}v_{t,s}+(S_{ts}-I)v_{s}\|_{\mathcal{H}_{-2\gamma}}\nonumber\\
&\quad\cdot\|\hat{\delta}(y-v)_{t,s}+(S_{ts}-I)(y-v)_{s}\|_{\mathcal{H}_{-2\gamma}}\nonumber\\
&\leq C_{g}\|y-v\|_{\infty,-2\gamma}(\|y\|_{\gamma,-2\gamma}|t-s|^{\gamma}+\|y\|_{\infty,0}|t-s|^{2\gamma})^{2}\nonumber\\
&\quad+C_{g}\|v\|_{\infty,-2\gamma}(\|y\|_{\gamma,-2\gamma}|t-s|^{\gamma}+\|y\|_{\infty,0}|t-s|^{2\gamma}+\|v\|_{\gamma,-2\gamma}|t-s|^{\gamma}\\
&\quad+\|v\|_{\infty,0}|t-s|^{2\gamma})\cdot(\|y-v\|_{\gamma,-2\gamma}|t-s|^{\gamma}+\|y-v\|_{\infty,0}|t-s|^{2\gamma}).
\end{aligned}
\end{equation}
\end{small}
For $\romannumeral2$, we have
\begin{small}
\begin{equation}
\begin{aligned}
\|\romannumeral2\|_{\mathcal{H}^{d}_{-2\gamma}}
&\leq\|Dg(y_{s})R^{y}_{t,s}-Dg(y_{s})R^{v}_{t,s}+Dg(y_{s})R^{v}_{t,s}-Dg(v_{s})R^{v}_{t,s}\|_{\mathcal{H}^{d}_{-2\gamma}}\nonumber\\
&\leq\|Dg(y_{s})(R^{y}_{t,s}-R^{v}_{t,s})\|_{\mathcal{H}^{d}_{-2\gamma}}+\|(Dg(y_{s})-Dg(v_{s}))R^{v}_{t,s}\|_{\mathcal{H}^{d}_{-2\gamma}}\nonumber\\
&\leq C_{g}|R^{y}-R^{v}|_{2\gamma,-2\gamma}|t-s|^{2\gamma}+C_{g}\|y-v\|_{\infty,-2\gamma}|R^{v}|_{2\gamma,-2\gamma}|t-s|^{2\gamma}.
\end{aligned}
\end{equation}
\end{small}
For $\romannumeral3$, we easily have
$$\|\romannumeral3\|_{\mathcal{H}^{d}_{-2\gamma}}\leq C_{g}\|y-v\|_{\infty,0}|t-s|^{2\gamma}.$$
For $\romannumeral4$, we have
\begin{small}
\begin{equation}
\begin{aligned}
\|\romannumeral4\|_{\mathcal{H}^{d}_{-2\gamma}}
&\leq\|(Dg(y_{s})-Dg(v_{s}))(S_{ts}-I)y^{'}_{s}\delta w_{t,s}\|_{\mathcal{H}^{d}_{-2\gamma}}\nonumber\\
&\quad+\|Dg(v_{s})(S_{ts}-I)(y^{'}_{s}-v^{'}_{s})\delta w_{t,s}\|_{\mathcal{H}^{d}_{-2\gamma}}\nonumber\\
&\leq C_{g}\|y-v\|_{\infty,-2\gamma}\|y^{'}\|_{\infty,0}|w|_{\gamma}|t-s|^{2\gamma}+C_{g}\|y^{'}-v^{'}\|_{\infty,0}|w|_{\gamma}|t-s|^{2\gamma}.
\end{aligned}
\end{equation}
\end{small}
For $\romannumeral5$ and $\romannumeral6$, similar to $\romannumeral4$, we obtain
$$\|\romannumeral5\|_{\mathcal{H}^{d}_{-2\gamma}}\leq C_{g,M}|w|_{\gamma}(\|y-v\|_{\infty,0}+\|y^{'}-v^{'}\|_{\infty,0})|t-s|^{3\gamma},$$
$$\|\romannumeral6\|_{\mathcal{H}^{d}_{-2\gamma}}\leq C_{g,M}(\|y-v\|_{\infty,0}+\|y-v\|_{\infty,-2\gamma})|t-s|^{2\gamma}.$$
Consequently, we easily obtain
$$|R^{g(y)}-R^{g(v)}|_{2\gamma,-2\gamma}\leq C_{g,M,T}(1+|w|_{\gamma})^{2}\|y-v,(y-v)^{'}\|_{\mathcal{D}^{2\gamma,\eta}_{w}}.$$

Finally, according to previous estimates and the norm of $\mathscr{D}^{2\gamma,2\gamma,0}_{S,w}([0,T];\mathcal{H})$, our result can be easily derived.
\end{proof}

Substituting \eqref{2.11} into \eqref{2.9}, we easily obtain the following result.
\begin{lemma}\label{Lemma2.4}
Let $T>0$, $g\in\mathcal{ C}^{3}_{-2\gamma,0}(\mathcal{H}, \mathcal{H}^{d})$, $(y,y^{'})$ and $(v,v^{'})\in\mathcal{D}^{2\gamma,\eta}_{w}([0,T];\mathcal{H})$, for some $\mathbf{w}\in\mathscr{C}^{\gamma}([0,T];\mathbb{R}^{d})$ and there exists $M>0$ such that $|w|$, $|w^{2}|$, $\|y,y^{'}\|_{\mathcal{D}^{2\gamma,\eta}_{w}}$ and $\|v,v^{'}\|_{\mathcal{D}^{2\gamma,\eta}_{w}}\leq M$, then, there exists a constant $C$ such that
\begin{small}
\begin{equation}\label{2.14}
\begin{aligned}
&\left\|\int^{\cdot}_{0}S_{\cdot u}(g(y_{u})-g(v_{u}))d\mathbf{w}_{u},g(y)-g(v)\right\|_{\mathcal{D}^{2\gamma,\eta}_{w}}\\
&\leq C_{g,M,T}(1+|w|_{\gamma}+|w^{2}|_{2\gamma})(1+|w|_{\gamma})^{2}\|y-v,(y-v)^{'}\|_{\mathcal{D}^{2\gamma,\eta}_{w}}.
\end{aligned}
\end{equation}
\end{small}
The constant $C_{M,g,T}$ depends on $M$, $g$ and the bounds of its derivatives, at the same time, it depends on time $T$, but is consistent with time $T\in(0,1]$.
\end{lemma}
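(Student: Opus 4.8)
The plan is to combine the linearity of the rough integral with the two preceding lemmas. First I would note that, writing $(g(y)-g(v))':=Dg(y)y'-Dg(v)v'$, the pair $(g(y)-g(v),(g(y)-g(v))')$ is exactly the difference, in $\mathscr{D}^{2\gamma,2\gamma,0}_{S,w}([0,T];\mathcal{H}^d)$, of the two mildly controlled rough paths $(g(y),(g(y))')$ and $(g(v),(g(v))')$, both of which belong to that space by Lemma~\ref{Lemma2.1}; its $\mathscr{D}^{2\gamma,2\gamma,0}_{S,w}$-seminorm is finite because Lemma~\ref{Lemma2.3} bounds it by $C_{M,g,T}(1+|w|_\gamma)^2\|y-v,(y-v)'\|_{\mathcal{D}^{2\gamma,\eta}_w}$ under the standing hypotheses $\|y,y'\|_{\mathcal{D}^{2\gamma,\eta}_w},\|v,v'\|_{\mathcal{D}^{2\gamma,\eta}_w}\le M$. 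Since, by Theorem~\ref{Theorem2.1}, the map $(z,z')\mapsto\big(\int_0^\cdot S_{\cdot u}z_u\,d\mathbf{w}_u,\,z\big)$ is linear, it carries this difference to the difference of the corresponding rough integrals; hence the left-hand side of \eqref{2.14} is itself a mildly controlled rough path and can be estimated directly.

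Next I would revisit the proof of Lemma~\ref{Lemma2.2}: every ingredient used there --- the rough integral bound \eqref{2.7}, the semigroup estimates \eqref{2.1}, and the elementary inequality \eqref{2.5} --- is linear in the integrand, so the whole chain of estimates culminating in \eqref{2.10} remains valid verbatim when $(g(y),(g(y))')$ is replaced by an arbitrary element of $\mathscr{D}^{2\gamma,2\gamma,0}_{S,w}([0,T];\mathcal{H}^d)$, in particular by the difference $(g(y)-g(v),(g(y)-g(v))')$. This produces
\[
\Big\|\int_0^\cdot S_{\cdot u}\big(g(y_u)-g(v_u)\big)\,d\mathbf{w}_u,\ g(y)-g(v)\Big\|_{\mathcal{D}^{2\gamma,\eta}_w}
\lesssim C_{\gamma,d,T}\,(1+|w|_\gamma+|w^2|_{2\gamma})\,\big\|g(y)-g(v),(g(y)-g(v))'\big\|_{\mathscr{D}^{2\gamma,2\gamma,0}_{S,w}},
\]
i.e.\ exactly \eqref{2.9} with $g(y)$ replaced by the difference $g(y)-g(v)$.

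Finally I would substitute the bound \eqref{2.11} of Lemma~\ref{Lemma2.3} into the right-hand side above and collect constants, which yields \eqref{2.14} with the factor $(1+|w|_\gamma+|w^2|_{2\gamma})(1+|w|_\gamma)^2$ and a constant $C_{g,M,T}$ depending on $M$, on $g$ and the bounds of its first three derivatives, and on $T$ in a way that remains bounded as $T\downarrow 0$ --- the latter because in both \eqref{2.9} and \eqref{2.11} the time variable enters only through nonnegative powers of $T$. The one step that genuinely needs attention is the claim in the second paragraph that Lemma~\ref{Lemma2.2} applies to the difference $(g(y)-g(v),(g(y)-g(v))')$ and not merely to a single composition $g(\cdot)$; but, as explained, this follows from the linearity of all the constituent bounds together with Lemma~\ref{Lemma2.3}, which ensures that the difference is an admissible mildly controlled rough path of finite norm. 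The remainder is bookkeeping of constants.
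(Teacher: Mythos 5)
Your proposal is correct and follows essentially the same route as the paper, which proves Lemma~\ref{Lemma2.4} simply by substituting the bound \eqref{2.11} of Lemma~\ref{Lemma2.3} into the estimate \eqref{2.9} of Lemma~\ref{Lemma2.2}, applied by linearity of the rough integral to the difference $(g(y)-g(v),(g(y)-g(v))')$. Your extra remark that the chain of estimates in Lemma~\ref{Lemma2.2} is linear in the integrand and hence valid for any element of $\mathscr{D}^{2\gamma,2\gamma,0}_{S,w}([0,T];\mathcal{H}^{d})$ is exactly the (implicit) justification the paper relies on.
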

However, in $\mathcal{D}^{2\gamma,\eta}_{w}([0,T];\mathcal{H})$, we also need to estimate the terms containing the initial condition and the drift of rough evolution equation \eqref{2.2}. Hence, we will focus on this in the following Lemma \ref{Lemma2.5}.
\begin{lemma}\label{Lemma2.5}
Let $T>0$, $\xi\in\mathcal{H}$, $f\in \mathcal{C}_{-2\gamma,0}(\mathcal{H}, \mathcal{H})$ be global Lipschitz continuous, and $(y,y^{'})\in\mathcal{D}^{2\gamma,\eta}_{w}([0,T];\mathcal{H})$, we have that the mildly Gubinelli derivative
\begin{equation}\label{2.15}
\begin{aligned}
 \left(S_{\cdot}\xi+\int^{\cdot}_{0}S_{\cdot u}f(y_{u})du\right)^{'}=0
\end{aligned}
\end{equation}
also have the estimate
\begin{equation}\label{2.16}
\begin{aligned}
\left\|S_{\cdot}\xi+\int^{\cdot}_{0}S_{\cdot u}f(y_{u})du,0\right\|_{\mathcal{D}^{2\gamma,\eta}_{w}}\leq C_{\gamma,T}(\|\xi\|+\|f(y)\|_{\infty,-2\gamma}+\|f(y)\|_{\infty,0}).
\end{aligned}
\end{equation}
Moreover, for two mildly controlled rough paths $(y,y^{'})$ and $(v,v^{'})$ with $y_{0}=\xi$ and $v_{0}=\tilde{\xi}$, we have
\begin{equation}\label{2.17}
\begin{aligned}
&\left\|S_{\cdot}(\xi-\tilde{\xi})+\int^{\cdot}_{0}S_{\cdot u}(f(y_{u})-f(v_{u}))du,0\right\|_{\mathcal{D}^{2\gamma,\eta}_{w}}\\
&\leq C_{\gamma,T}(\|\xi-\tilde{\xi}\|+\|f(y)-f(v)\|_{\infty,-2\gamma}+\|f(y)-f(v)\|_{\infty,0}).
\end{aligned}
\end{equation}
\end{lemma}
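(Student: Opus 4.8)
The plan is to write $z_{t}:=S_{t}\xi+\int_{0}^{t}S_{tu}f(y_{u})\,du$, show that $(z,0)\in\mathcal{D}^{2\gamma,\eta}_{w}([0,T];\mathcal{H})$ together with \eqref{2.15} and the bound \eqref{2.16}, and then obtain \eqref{2.17} by running the same argument on $z-\tilde{z}$. The structural point that makes everything work is that the semigroup term is annihilated by $\hat{\delta}$: using $S_{t-s}S_{s}=S_{t}$ and $S_{t-s}S_{s-u}=S_{t-u}$ one gets
\begin{equation*}
\hat{\delta}z_{t,s}=z_{t}-S_{ts}z_{s}=\int_{s}^{t}S_{tu}f(y_{u})\,du ,
\end{equation*}
so the initial datum drops out and only the (genuinely $\hat{\delta}$-type) convolution of the drift survives. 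Taking the mildly Gubinelli derivative to be $z'\equiv 0$, we have $R^{z}_{t,s}=\hat{\delta}z_{t,s}=\int_{s}^{t}S_{tu}f(y_{u})\,du$, which is precisely the quantity to estimate; once it is shown to belong to $\mathcal{C}^{2\gamma}_{2}([0,T];\mathcal{H}_{-2\gamma})$ we obtain \eqref{2.15}.

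I would then estimate $\hat{\delta}z_{t,s}$ in $\mathcal{H}_{-2\gamma}$ and in $\mathcal{H}$ separately, using \eqref{2.1} with two choices of exponents. With $\alpha=\beta=-2\gamma$ the smoothing estimate in \eqref{2.1} gives $\|S_{tu}f(y_{u})\|_{\mathcal{H}_{-2\gamma}}\le C\|f(y_{u})\|_{\mathcal{H}_{-2\gamma}}$, hence
\begin{equation*}
\|\hat{\delta}z_{t,s}\|_{\mathcal{H}_{-2\gamma}}\le C\|f(y)\|_{\infty,-2\gamma}\,|t-s|\le C\,T^{1-2\gamma}\|f(y)\|_{\infty,-2\gamma}\,|t-s|^{2\gamma}
\end{equation*}
(recall $1-2\gamma\ge 0$); this shows simultaneously that $R^{z}\in\mathcal{C}^{2\gamma}_{2}([0,T];\mathcal{H}_{-2\gamma})$, proving \eqref{2.15}, and that $|R^{z}|_{2\gamma,-2\gamma}\le C\,T^{1-2\gamma}\|f(y)\|_{\infty,-2\gamma}$. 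With $\alpha=\beta=0$ instead, \eqref{2.1} gives $\|S_{tu}f(y_{u})\|_{\mathcal{H}}\le C\|f(y_{u})\|_{\mathcal{H}}$, so $\|\hat{\delta}z_{t,s}\|_{\mathcal{H}}\le C\|f(y)\|_{\infty,0}\,|t-s|$ and therefore $\|z\|_{\eta,0}\le C\,T^{1-\eta}\|f(y)\|_{\infty,0}$ since $\eta<1$. Because $z_{0}=\xi$ and $z'\equiv 0$ kills the terms $\|z'_{0}\|_{\mathcal{H}^{d}_{-2\gamma}}$, $\|z'\|_{\infty,0}$ and $\|z'\|_{\gamma,-2\gamma}$ in $\|z,0\|_{\mathcal{D}^{2\gamma,\eta}_{w}}$, adding these bounds gives \eqref{2.16}, with a constant $C_{\gamma,T}$ that stays bounded as $T\to 0$ since the powers $T^{1-\eta}$, $T^{1-2\gamma}$ are nonnegative; finiteness of the right-hand side follows from the global Lipschitz property of $f$ and the boundedness of $y$ in $\mathcal{H}$ and $\mathcal{H}_{-2\gamma}$.

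For \eqref{2.17} I would note that $(z-\tilde{z})_{t}=S_{t}(\xi-\tilde{\xi})+\int_{0}^{t}S_{tu}(f(y_{u})-f(v_{u}))\,du$, so by the same cancellation $\hat{\delta}(z-\tilde{z})_{t,s}=\int_{s}^{t}S_{tu}(f(y_{u})-f(v_{u}))\,du$, and the estimates of the previous paragraph apply verbatim with $\xi$ replaced by $\xi-\tilde{\xi}$ and $f(y)$ by $f(y)-f(v)$. There is no genuine obstacle here; the only point requiring care --- and the reason both $\|f(y)\|_{\infty,-2\gamma}$ and $\|f(y)\|_{\infty,0}$ must appear on the right of \eqref{2.16} --- is that the $\mathcal{H}$-regularity controlling the $\|z\|_{\eta,0}$ term cannot be recovered from the $\mathcal{H}_{-2\gamma}$ estimate, since the natural bound $\|S_{tu}f(y_{u})\|_{\mathcal{H}}\le C(t-u)^{-2\gamma}\|f(y_{u})\|_{\mathcal{H}_{-2\gamma}}$ yields only H\"older exponent $1-2\gamma$, which may be below $\eta$ for $\gamma$ close to $1/2$ and degenerates at $\gamma=1/2$; apart from this and the routine bookkeeping to keep constants uniform over $T\in(0,1]$, the argument is a direct application of \eqref{2.1}.
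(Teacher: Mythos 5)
Your proposal is correct and follows essentially the same route as the paper: the operator $\hat{\delta}$ annihilates the semigroup term, the Gubinelli derivative is taken to be zero, and the drift convolution is bounded trivially by $|t-s|$ in both $\mathcal{H}_{-2\gamma}$ and $\mathcal{H}$, then converted to the exponents $2\gamma$ and $\eta$ via the factors $T^{1-2\gamma}$ and $T^{1-\eta}$, exactly as in the paper's estimates \eqref{2.18}--\eqref{2.19}. The only cosmetic difference is that you treat $S_{\cdot}\xi$ and the drift integral as a single object $z$, whereas the paper estimates them separately before adding.
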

\begin{proof}
 Let $0<T\leq1$. Since $$\|S_{t}\xi-S_{ts}S_{s}\xi\|_{\mathcal{H}_{-2\gamma}}=0,$$
 $$\|S_{t}\xi-S_{ts}S_{s}\xi\|_{\mathcal{H}}=0,$$
 $$ \|S_{0}\xi\|_{\mathcal{H}}\lesssim\|\xi\|_{\mathcal{H}},$$  hence we have
 $$(S_{\cdot}\xi)^{'}=0,$$ $$ |R^{S_{\cdot}\xi}|_{2\gamma,-2\gamma}=0,$$
\begin{equation}\label{2.18}
 \|S_{\cdot}\xi,0\|_{\mathcal{D}^{2\gamma,\eta}_{w}}\leq C\|\xi\|.
\end{equation}
Meanwhile, due to
\begin{equation}
\begin{aligned}\nonumber
&\left\|\int^{t}_{0}S_{tu}f(y_{u})du-S_{ts}\int^{s}_{0}S_{su}f(y_{u})du\right\|_{\mathcal{H}_{-2\gamma}}\\\nonumber
&=\left\|\int^{t}_{s}S_{tu}f(y_{u})du\right\|_{\mathcal{H}_{-2\gamma}}\\
&\leq\int^{t}_{s}\|f(y_{u})\|_{\mathcal{H}_{-2\gamma}}du\\
&=\|f(y)\|_{\infty,-2\gamma}(t-s),
\end{aligned}
\end{equation}

$$\left\|\int^{0}_{0}S_{0u}f(y_{u})du\right\|_{\mathcal{H}_{-2\gamma}}=0,$$

$$\left\|\int^{t}_{s}S_{tu}f(y_{u})du\right\|_{\mathcal{H}} \leq\int^{t}_{s}\|f(y_{u})\|_{\mathcal{H}}du\leq (t-s)\|f(y)\|_{\infty,0},$$
thus we have $$\left(\int^{t}_{0}S_{tu}f(y_{u})du\right)^{'}=0,$$ $$\left\|\int^{\cdot}_{0}S_{\cdot u}f(y_{u})du\right\|_{\eta,0}\leq\|f(y)\|_{\infty,0}|t-s|^{1-\eta},$$

$$|R^{\int^{\cdot}_{0}S_{\cdot u}f(y_{u})du}|_{2\gamma,-2\gamma}\leq\|f(y)\|_{\infty,-2\gamma}(t-s)^{1-2\gamma},$$
\begin{equation}\label{2.19}
\left\|\int^{\cdot}_{0}S_{\cdot u}f(y_{u})du,0\right\|_{\mathcal{D}^{2\gamma,\eta}_{w}}\leq C_{\gamma}(T^{1-2\gamma}\|f(y)\|_{\infty,_{-2\gamma}}+T^{1-\eta}\|f(y)\|_{\infty,0}).
\end{equation}
 Finally, \eqref{2.16} is proved, consequently, \eqref{2.17} can be easily obtained.
\end{proof}

In $\mathcal{D}^{2\gamma,\eta}_{w}([0,T];\mathcal{H})$, base on above preliminary results, Similar to Theorem 4.1 of \cite{MR4040992} one can then easily derive a local solution for \eqref{2.2} by a fixed-point argument, i.e.:
\begin{theorem}\label{Theorem2.2}
 Let $T>0$, given $\xi\in\mathcal{H}$ and $\mathbf{w}=(w,w^{2})\in\mathscr{C}^{\gamma}([0,T];\mathbb{R}^{d})$ with $\gamma\in(\frac{1}{3},\frac{1}{2}]$. Then there exists $0<T_{0}\leq T$ such that the rough evolution equation \eqref{2.2} has a unique local solution represented by a mildly controlled rough path $(y,y^{'})\in\mathcal{D}^{2\gamma,\eta}_{w}([0,T_{0}];\mathcal{H})$ with $y^{'}=g(y)$, for all $0\leq t\leq T_{0}$
\begin{equation}\label{2.20}
\begin{aligned}
y_{t}=S_{t}\xi+\int^{t}_{0}S_{tu}f(y_{u})du+\int^{t}_{0}S_{tu}g(y_{u})d\mathbf{w}_{u}.
\end{aligned}
\end{equation}
\end{theorem}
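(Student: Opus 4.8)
The plan is to realize the mild solution \eqref{2.20} as the unique fixed point of the map $\mathcal{M}$ that sends a mildly controlled rough path $(y,y')\in\mathcal{D}^{2\gamma,\eta}_{w}([0,T_{0}];\mathcal{H})$ (with $0\le\eta<\gamma$ fixed once and for all) to
\[
\mathcal{M}(y,y'):=\Big(S_{\cdot}\xi+\int_{0}^{\cdot}S_{\cdot u}f(y_{u})\,du+\int_{0}^{\cdot}S_{\cdot u}g(y_{u})\,d\mathbf{w}_{u},\ g(y)\Big).
\]
The choice of Gubinelli derivative $g(y)$ is forced by Lemma \ref{Lemma2.5}, since the drift and the $S_{\cdot}\xi$ term contribute nothing to the derivative; see \eqref{2.15}. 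The first task is to check that $\mathcal{M}$ is well defined, i.e. lands in $\mathcal{D}^{2\gamma,\eta}_{w}([0,T_{0}];\mathcal{H})$: for the rough-integral component this is exactly Lemma \ref{Lemma2.2} combined with Lemma \ref{Lemma2.1} (composition with $g\in\mathcal{C}^{3}_{-2\gamma,0}$ maps $\mathcal{D}^{2\gamma,\eta}_{w}$ into $\mathscr{D}^{2\gamma,2\gamma,0}_{S,w}$, so that the integrand is an admissible mildly controlled rough path), and for the drift and initial parts it is Lemma \ref{Lemma2.5}.

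Next I would run the contraction-mapping argument on the closed ball $B_{R}=\{(y,y')\in\mathcal{D}^{2\gamma,\eta}_{w}([0,T_{0}];\mathcal{H}):y_{0}=\xi,\ y_{0}'=g(\xi),\ \|y,y'\|_{\mathcal{D}^{2\gamma,\eta}_{w}}\le R\}$, which is complete for the metric induced by the norm. Combining \eqref{2.16} with \eqref{2.9} and \eqref{2.8}, and working throughout with $T_{0}\le1$ so that all constants are uniform, one obtains a self-map bound of the schematic form $\|\mathcal{M}(y,y')\|_{\mathcal{D}^{2\gamma,\eta}_{w}}\le C\,(1+|w|_{\gamma}+|w^{2}|_{2\gamma})^{N}\big(P_{0}+T_{0}^{\kappa}Q(R)\big)$, where $P_{0}$ collects the genuinely data-dependent quantities ($\|\xi\|$, $\|g(\xi)\|_{\mathcal{H}^{d}_{-2\gamma}}$, $\|Dg(\xi)g(\xi)\|_{\mathcal{H}^{d\times d}_{-2\gamma}}$, $\|f(\xi)\|$, and the like), $Q$ is a fixed polynomial, and $\kappa>0$ is the minimum of the positive exponents $\gamma$, $\gamma-\eta$, $1-\eta$, $1-2\gamma$ that appear in \eqref{2.16}, \eqref{2.19} and in the proof of Lemma \ref{Lemma2.2}. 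Fixing $R:=2C(1+|w|_{\gamma}+|w^{2}|_{2\gamma})^{N}P_{0}$ and then choosing $T_{0}$ so small that $C(1+|w|_{\gamma}+|w^{2}|_{2\gamma})^{N}T_{0}^{\kappa}Q(R)\le R/2$ makes $\mathcal{M}$ map $B_{R}$ into itself. For contraction I would invoke \eqref{2.14} and \eqref{2.17}: the key point is that any two inputs in $B_{R}$ share the same initial data, so $(y-v)_{0}=0$ and $(y-v)_{0}'=0$, and then every sup-norm of $y-v$ or $g(y)-g(v)$ that is not already paired with a positive power of $T_{0}$ can be converted into one via estimates of the type $\|y-v\|_{\infty,0}\lesssim T_{0}^{\eta}\|y-v\|_{\eta,0}$, as in the proof of Lemma \ref{Lemma2.3}. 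This yields $\|\mathcal{M}(y,y')-\mathcal{M}(v,v')\|_{\mathcal{D}^{2\gamma,\eta}_{w}}\le C_{R,g,\mathbf{w}}\,T_{0}^{\kappa}\|y-v,(y-v)'\|_{\mathcal{D}^{2\gamma,\eta}_{w}}$, so after shrinking $T_{0}$ once more $\mathcal{M}$ is a strict contraction on $B_{R}$; its fixed point is the asserted local solution, and uniqueness in $\mathcal{D}^{2\gamma,\eta}_{w}([0,T_{0}];\mathcal{H})$ is built into the fixed-point statement (a standard argument then shows two solutions on overlapping intervals coincide).

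The main obstacle — and the reason the enlarged space $\mathcal{D}^{2\gamma,\eta}_{w}$ with $\eta<\gamma$ is used rather than the space of \cite{MR4040992} — is precisely the bookkeeping in the previous paragraph. In the rough-evolution setting several terms in the bound for the rough integral, for instance $\|g(y)\|_{\infty,0}$ and the $w^{2}$-linear term $T^{\gamma}|w^{2}|_{2\gamma}\|(g(y))'\|_{\gamma,-2\gamma}$ visible in \eqref{2.10}, do not automatically come with a free positive power of $T_{0}$, so naive iteration on a fixed ball fails to close. One must cleanly separate the data-only contribution $P_{0}$ (which determines $R$) from the solution-dependent contribution, and verify that the latter, as well as the entire difference $\mathcal{M}(y)-\mathcal{M}(v)$ once the initial conditions are matched, carries a strictly positive power of $T_{0}$; this is exactly what Lemmas \ref{Lemma2.2}, \ref{Lemma2.4} and \ref{Lemma2.5} provide, with constants uniform over $T_{0}\in(0,1]$. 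Granting this, the remainder is the classical Banach fixed-point proof, parallel to Theorem 4.1 of \cite{MR4040992}.
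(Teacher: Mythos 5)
Your proposal is correct and follows essentially the same route as the paper: the same map $\mathcal{M}$ with Gubinelli derivative $g(y)$, the same space $\mathcal{D}^{2\gamma,\eta}_{w}([0,T_{0}];\mathcal{H})$, and a Banach fixed-point argument on a ball of paths with initial data $(\xi,g(\xi))$, fed by Lemmas \ref{Lemma2.1}, \ref{Lemma2.2}, \ref{Lemma2.4}, \ref{Lemma2.5} and by separating the data-only contributions from the $T_{0}$-small ones exactly as in the paper's invariance and contraction estimates. The only (cosmetic) differences are that the paper centres its ball at the reference path $t\mapsto\bigl(S_{t}\xi+S_{t}g(\xi)\delta w_{t,0},S_{t}g(\xi)\bigr)$ with radius $r=2C_{L_{f},g,\varrho(w,0)}$ instead of your full-norm ball $B_{R}$, and that you should take $\eta$ strictly positive and include it in your exponent $\kappa$, since the contraction rate is $T_{0}^{\eta\wedge(\gamma-\eta)}$, as in the paper.
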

\begin{proof}
Let $0<T\leq1$,
\begin{small}
$$\mathcal{M}(y,y^{'})_{t}=\left(S_{t}\xi+\int^{t}_{0}S_{tu}f(y_{u})du+\int^{t}_{0}S_{tu}g(y_{u})d\mathbf{w}_{u},g(y_{t})\right).$$
\end{small}
It is easy to obtain that if $(y_{0},y^{'}_{0})=(\xi,g(\xi))$ then the same is true for $\mathcal{M}(y,y^{'})$. Thus we can regard $\mathcal{M}_{T}$ as a mapping on the complete metric space:
\begin{small}
$$\{(y,y^{'})\in \mathcal{D}^{2\gamma,\eta}_{w}([0,T];\mathcal{H}):y_{0}=\xi,y^{'}_{0}=g(\xi)\}.$$
\end{small}
Meanwhile, since $$\|S_{\cdot}\xi+S_{\cdot}g(\xi)\delta w_{\cdot,0},S_{\cdot}g(\xi)\|_{w,2\gamma,-2\gamma}=0$$ hence we easily have that this is also true for the closed ball $B_{T}(w,r)$ centred at
\begin{small}
$t\rightarrow(S_{t}\xi+S_{t}g(\xi)\delta w_{t,0},S_{t}g(\xi))\in\mathcal{D}^{2\gamma,\eta}_{w}([0,T];\mathcal{H})$, i.e.
\end{small}
\begin{small}
\begin{equation}
\begin{aligned}\nonumber
B_{T}(w,r)&=\{(y,y^{'})\in \mathcal{D}^{2\gamma,\eta}_{w}([0,T];\mathcal{H}):y_{0}=\xi,y^{'}_{0}=g(\xi),  \|y-(S_{\cdot}\xi+S_{\cdot}g(\xi)\delta w_{\cdot,0})\|_{\eta,0}\\
          &\quad +\|y^{'}-S_{\cdot}g(\xi)\|_{\infty,0}+\|y-(S_{\cdot}\xi+S_{\cdot}g(\xi)\delta w_{\cdot,0}),y^{'}-S_{\cdot}g(\xi)\|_{w,2\gamma,-2\gamma}\leq r\}.
\end{aligned}
\end{equation}
\end{small}
Since, by triangle inequality, for $(y,y^{'})\in B_{T}(w,r)$ we have
 $$\|S_{\cdot}\xi+S_{\cdot}g(\xi)\delta w_{\cdot,0},S_{\cdot}g(\xi)\|_{w,2\gamma,2\gamma,\eta}\leq\|g(\xi)\|_{\mathcal{H}^{d}}+T^{\gamma-\eta}\|g(\xi)\|_{\mathcal{H}^{d}}|w|_{\gamma},$$ $$
\|y,y^{'}\|_{w,2\gamma,2\gamma,\eta}\leq r+\|g(\xi)\|_{\mathcal{H}^{d}}+\|g(\xi)\|_{\mathcal{H}^{d}}|w|_{\gamma}.
$$
 Then, one obtains
\begin{small}
\begin{equation}
\begin{aligned}\nonumber
&\|\mathcal{M}(y)-(S_{\cdot}\xi+S_{\cdot}g(\xi)\delta w_{\cdot,0}),g(y)-S_{\cdot}g(\xi)\|_{w,2\gamma,2\gamma,\eta}\\
&\leq\|\mathcal{M}(y),g(y)\|_{w,2\gamma,2\gamma,\eta}+\|S_{\cdot}\xi+S_{\cdot}g(\xi)\delta w_{\cdot,0},S_{\cdot}g(\xi)\|_{W,2\gamma,2\gamma,\eta}\\
&\leq
T^{1-2\gamma}(\|f(y)\|_{\infty,_{-2\gamma}}+\|f(y)\|_{\infty,0})+T^{\gamma-\eta}\|g(y)\|_{\infty,0}|w|_{\gamma}+\|g(y)\|_{\infty,0}\\
&\quad+T^{\gamma-\eta}(|w|_{\gamma}+|w^{2}|_{2\gamma})\|g(y),(g(y))^{'}\|_{w,2\gamma,-2\gamma}+(1+|w|_{\gamma}+|w^{2}|_{2\gamma})\|(g(y_{0}))^{'}\|_{\mathcal{H}^{d\times d}_{-2\gamma}}\\
&\quad+T^{2\gamma-\eta}\|(g(y))^{'}\|_{\infty,0}|w^{2}|_{2\gamma}+T^{\gamma}(1+|w|_{\gamma}+|w^{2}|_{2\gamma})\|g(y),(g(y))^{'}\|_{w,2\gamma,-2\gamma}\\
&\quad+T^{\gamma}|w^{2}|_{2\gamma}\|(g(y))^{'}\|_{\gamma,-2\gamma}+\|g(\xi)\|_{\mathcal{H}^{d}}+T^{\gamma-\eta}\|g(\xi)\|_{\mathcal{H}^{d}}|w|_{\gamma},
\end{aligned}
\end{equation}
\end{small}
since $g\in\mathcal{C}^{3}_{-2\gamma,0}(\mathcal{H}, \mathcal{H}^{d})$ and $\|g(0)\|_{\mathcal{H}^{d}_{\theta}}$ for $\theta\geq-2\gamma$, by mean value theorem we easily obtain $\|g(y)\|_{\mathcal{H}^{d}_{\theta}}\leq 1+\|y\|_{\mathcal{H}_{\theta}}$, consequently, base on \eqref{2.8} and above estimates we easily have
\begin{small}
\begin{equation}
\begin{aligned}\nonumber
&\|\mathcal{M}(y)-(S_{\cdot}\xi+S_{\cdot}g(\xi)\delta w_{\cdot,0}),g(y)-S_{\cdot}g(\xi)\|_{w,2\gamma,2\gamma,\eta}\\
&\leq C_{L_{f},g,\varrho(w,0)}+T^{\gamma-\eta}C_{g,\varrho(w,0),\|\xi\|,\|y,y^{'}\|_{w,2\gamma,2\gamma,\eta}}.
\end{aligned}
\end{equation}
\end{small}
Let $r=2C_{L_{f},g,\varrho(w,0)}$, then for $\forall (y,y^{'})\in B_{T}(w,r)$, we have
\begin{equation}
\begin{aligned}\nonumber
\|\mathcal{M}(y)-(S_{\cdot}\xi+S_{\cdot}g(\xi)\delta w_{\cdot,0}),g(y)-S_{\cdot}g(\xi)\|_{w,2\gamma,2\gamma,\eta}
\leq\frac{r}{2}+T^{\gamma-\eta}C_{g,\varrho(w,0),\|\xi\|,r}.
\end{aligned}
\end{equation}
By letting $T=T_{0}$ to be sufficient small such that
$$T_{0}^{\gamma-\eta}C_{g,\varrho(w,0),\|\xi\|,r}<\frac{r}{2},$$
then one otains
$$\mathcal{M}(B_{T_{0}}(w,r))\subseteq B_{T_{0}}(w,r).$$
For the sake of proving contractivity of $\mathcal{M}_{T_{0}}$, one can use steps that are similar to the previous steps to show
\begin{small}
\begin{equation}
\begin{aligned}\nonumber
\|\mathcal{M}(y)-\mathcal{M}(v),g(y)-g(v)\|_{w,2\gamma,2\gamma,\eta}
\leq T_{0}^{\eta\wedge(\gamma-\eta)}C_{g,\rho(w,0),\|\xi\|,r}\|y-v,y^{'}-v^{'}\|_{w,2\gamma,2\gamma,\eta}.
\end{aligned}
\end{equation}
\end{small}
This ensures contractivity when $T_{0}$ is sufficient small. By the Banach fixed point theorem,  one has that there is a unique $(y,y^{'})\in B_{T_{0}}(w,r)$ satisfies $\mathcal{M}(y)=y$, i.e. a solution to REE \eqref{2.2} on the small time interval $[0,T_{0}]$.
\end{proof}
\begin{remark}\label{remark5}
Our proof of Theorem \ref{Theorem2.2} is simpler than the one of Theorem 4.1 in \cite{MR4040992}. This is also the key why we choose to study \eqref{2.2} in the space $\mathcal{D}^{2\gamma,\eta}_{w}([0,T];\mathcal{H})$. Here, we directly view $\mathcal{M}_{T}$ as mapping from the space $\mathcal{D}^{2\gamma,\eta}_{w}([0,T];\mathcal{H})$ into itself, however, in \cite{MR4040992},  the technique is to take $\varepsilon\in(1/3,\gamma]$ and view $\mathcal{M}_{T}$ as mapping from $\mathscr{D}^{2\gamma,2\gamma,\varepsilon}_{S,w}([0,T];\mathcal{H}_{2\varepsilon-2\gamma})$, rather than $\mathscr{D}^{2\gamma,2\gamma,\gamma}_{S,w}([0,T];\mathcal{H})$.
\end{remark}
\subsection{Global in time solution of rough evolution equation}
As we all known, the global in time solution is the key that allows one to consider the longtime behaviour of rough evolution equation \eqref{2.2}, so in this subsection we will focus on this issue. Similar to \cite{MR4097587} and \cite{MR11111}, we will derive the following result which is fundamental importance for the discussion of global in time solution for \eqref{2.2}. according to \eqref{2.8}, \eqref{2.10}, \eqref{2.18} and \eqref{2.19}, we obtain that:
\begin{corollary}\label{Corollary2.1}
Let $(y,g(y))\in\mathcal{D}^{2\gamma,\eta}_{w}([0,T];\mathcal{H})$ with $0<T\leq1$ be the solution of \eqref{2.2} with the initial condition $y_{0}=\xi\in\mathcal{H}$. Then one has the following estimate
\begin{equation}\label{2.21}
\|y,g(y)\|_{\mathcal{D}^{2\gamma,\eta}_{w}}\lesssim1+\|\xi\|+T^{\gamma-\eta}\|y,g(y)\|_{\mathcal{D}^{2\gamma,\eta}_{w}}.
\end{equation}
\end{corollary}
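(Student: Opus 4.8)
The plan is to simply unpack the norm $\|y,g(y)\|_{\mathcal{D}^{2\gamma,\eta}_{w}}$ for the solution $(y,g(y))$ of \eqref{2.2} using the mild formulation \eqref{2.20}, split $y$ into its drift part $S_{\cdot}\xi+\int_{0}^{\cdot}S_{\cdot u}f(y_{u})\,du$ and its rough part $\int_{0}^{\cdot}S_{\cdot u}g(y_{u})\,d\mathbf{w}_{u}$, and estimate each using the lemmas already established. Recall that $y^{'}=g(y)$, so $\|y,g(y)\|_{\mathcal{D}^{2\gamma,\eta}_{w}}$ decomposes by the triangle inequality into the $\mathcal{D}^{2\gamma,\eta}_{w}$-norm of the drift term (with mildly Gubinelli derivative $0$ by \eqref{2.15}) plus the $\mathcal{D}^{2\gamma,\eta}_{w}$-norm of the rough integral term (with derivative $g(y)$).

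First I would apply Lemma \ref{Lemma2.5}, specifically \eqref{2.16} together with the explicit bounds \eqref{2.18} and \eqref{2.19}: the drift contribution is controlled by $C_{\gamma,T}(\|\xi\|+\|f(y)\|_{\infty,-2\gamma}+\|f(y)\|_{\infty,0})$, and since $f$ is globally Lipschitz with $f\in\mathcal{C}_{-2\gamma,0}(\mathcal{H},\mathcal{H})$, one has $\|f(y)\|_{\infty,\theta}\lesssim 1+\|y\|_{\infty,\theta}$ for $\theta\in\{-2\gamma,0\}$; moreover $\|y\|_{\infty,\theta}$ is bounded by the $\mathcal{D}^{2\gamma,\eta}_{w}$-norm, and in fact the crucial point is that \eqref{2.19} carries a prefactor $T^{1-2\gamma}$ resp. $T^{1-\eta}$, both of which are powers of $T$ with positive exponent strictly larger than $\gamma-\eta$ (using $\gamma\le1/2$), so these pieces can be absorbed into the $1+\|\xi\|$ term and a $T^{\gamma-\eta}\|y,g(y)\|_{\mathcal{D}^{2\gamma,\eta}_{w}}$ remainder. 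Next, for the rough integral term I would invoke Lemma \ref{Lemma2.2}, estimate \eqref{2.9}/\eqref{2.10}, which bounds the $\mathcal{D}^{2\gamma,\eta}_{w}$-norm of $\bigl(\int_{0}^{\cdot}S_{\cdot u}g(y_{u})\,d\mathbf{w}_{u},g(y)\bigr)$ by $C_{\gamma,d,T}(1+|w|_{\gamma}+|w^{2}|_{2\gamma})\|g(y),(g(y))^{'}\|_{\mathscr{D}^{2\gamma,2\gamma,0}_{S,w}}$; then Lemma \ref{Lemma2.1}, estimate \eqref{2.8}, bounds $\|g(y),(g(y))^{'}\|_{\mathscr{D}^{2\gamma,2\gamma,0}_{S,w}}$ in terms of the $\mathcal{D}^{2\gamma,\eta}_{w}$-data of $(y,g(y))$, using again $\|g(y_{0})\|\lesssim 1+\|\xi\|$ and the fact that all the purely-$T^{0}$ contributions in \eqref{2.10} come with either the fixed initial data $\|g(y_{0})\|$ (absorbed into $1+\|\xi\|$) or with a positive power of $T$ attached to $\|y,g(y)\|_{\mathcal{D}^{2\gamma,\eta}_{w}}$.

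The one genuinely delicate bookkeeping point — and the step I expect to be the main obstacle — is that a naive application of \eqref{2.10} leaves a term of the form $(1+|w|_{\gamma}+|w^{2}|_{2\gamma})\|(g(y_{0}))^{'}\|$ and a term $\|g(y)\|_{\infty,0}$ with \emph{no} positive power of $T$ multiplying them; one must check that these are all absorbed into $1+\|\xi\|$. For $(g(y_0))' = Dg(\xi)g(\xi)$ this is immediate from boundedness of $Dg$ and $\|g(\xi)\|\lesssim 1+\|\xi\|$. For $\|g(y)\|_{\infty,0}$ one uses $\|g(y)\|_{\infty,0}\lesssim 1+\|y\|_{\infty,0}$ and then $\|y\|_{\infty,0}\lesssim \|\xi\|+T^{\eta}\|y\|_{\eta,0}\le\|\xi\|+T^{\eta}\|y,g(y)\|_{\mathcal{D}^{2\gamma,\eta}_{w}}$, so it splits into an absorbable constant plus a $T^{\eta}$-weighted (hence, for the purpose of \eqref{2.21}, a $T^{\gamma-\eta}$-type after relabeling the implied constant, or simply kept as a separate small term) remainder. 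Since $0\le\eta<\gamma\le1/2$, every power of $T$ appearing — $T^{\gamma-\eta}$, $T^{\eta}$, $T^{2\gamma-\eta}$, $T^{1-2\gamma}$, $T^{1-\eta}$ — is positive on $(0,1]$, so after collecting terms and bounding $|w|_{\gamma}$, $|w^{2}|_{2\gamma}$, $T$ by absolute constants inside $C=C_{\gamma,d,T,f,g,\varrho_{\gamma}(\mathbf{w},0)}$ (which is legitimate since the lemma constants are uniform over $T\in(0,1]$), one arrives exactly at \eqref{2.21}: the estimate $\|y,g(y)\|_{\mathcal{D}^{2\gamma,\eta}_{w}}\lesssim 1+\|\xi\|+T^{\gamma-\eta}\|y,g(y)\|_{\mathcal{D}^{2\gamma,\eta}_{w}}$, where one may take $\gamma-\eta$ as the representative small exponent since $T^{\eta}\le T^{\gamma-\eta}$ need not hold — so more carefully I would state the remainder as a sum of positive powers of $T$ times $\|y,g(y)\|_{\mathcal{D}^{2\gamma,\eta}_{w}}$ and note that all exponents are bounded below by a positive constant, which is all that is needed for the subsequent global existence argument; writing it as $T^{\gamma-\eta}$ in \eqref{2.21} is then a harmless normalization.
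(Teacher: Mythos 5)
Your proposal is correct and follows essentially the same route as the paper's proof: split the mild solution into $S_{\cdot}\xi$, the drift integral and the rough integral, then apply \eqref{2.18}--\eqref{2.19} (Lemma \ref{Lemma2.5}), \eqref{2.9}--\eqref{2.10} (Lemma \ref{Lemma2.2}) and \eqref{2.8} (Lemma \ref{Lemma2.1}) together with $\|g(y)\|_{\mathcal{H}^{d}_{\theta}}\leq 1+\|y\|_{\mathcal{H}_{\theta}}$, and absorb all terms carrying positive powers of $T$ into $T^{\gamma-\eta}\|y,g(y)\|_{\mathcal{D}^{2\gamma,\eta}_{w}}$. Your bookkeeping of the $T$-free terms ($\|g(y)\|_{\infty,0}$ and $(g(y_{0}))'$), and the remark that the leftover $T^{\eta}$-weighted piece is only notationally normalized to $T^{\gamma-\eta}$, is in fact slightly more careful than the paper's own computation, which performs the same absorption implicitly.
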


\begin{proof}
Since $(y,g(y))$ is the solution of \eqref{2.2} we have
\begin{small}
\begin{equation}
\begin{aligned}\nonumber
\|y,g(y)\|_{\mathcal{D}^{2\gamma,\eta}_{w}}&\leq \|S_{\cdot}\xi,0\|_{\mathcal{D}^{2\gamma,\eta}_{w}}+\left\|\int^{\cdot}_{0}S_{\cdot u}f(y_{u})du,0\right\|_{\mathcal{D}^{2\gamma,\eta}_{w}}\\
&\quad+\left\|\int^{\cdot}_{0}S_{\cdot u}g(y_{u})d\mathbf{w}_{u},g(y)\right\|_{\mathcal{D}^{2\gamma,\eta}_{w}}.
\end{aligned}
\end{equation}
\end{small}
According to \eqref{2.10}, \eqref{2.18} and \eqref{2.19}, we obtain
\begin{small}
\begin{equation}
\begin{aligned}\nonumber
&\|y,g(y)\|_{\mathcal{D}^{2\gamma,\eta}_{w}}\\ &\lesssim\|\xi\|+T^{1-2\gamma}(\|f(y)\|_{\infty,-2\gamma}+\|f(y)\|_{\infty,0})+\|g(y_{0})\|_{\mathcal{H}^{d}_ {-2\gamma}}+\|(g(y_{0}))^{'}\|_{\mathcal{H}^{d\times d}_{-2\gamma}}\\
&\quad+\|g(y)\|_{\infty,0}+T^{2\gamma-\eta}\|(g(y))^{'}\|_{\infty,0}+T^{\gamma-\eta}\|g(y),(g(y))^{'}\|_{w,2\gamma,-2\gamma}.
\end{aligned}
\end{equation}
\end{small}
Meanwhile, from \eqref{2.8} we have
$$\|g(y),(g(y))^{'}\|_{w,2\gamma,-2\gamma}\lesssim1+\|y,g(y)\|_{\mathcal{D}^{2\gamma,\eta}_{w}}.$$
Combining $\|g(y)\|_{\mathcal{H}^{d}_{\theta}}\leq 1+\|y\|_{\mathcal{H}_{\theta}}$ and the bounds of its derivatives with previous estimates, we obtain
\begin{equation}
\begin{aligned}\nonumber
\|y,g(y)\|_{\mathcal{D}^{2\gamma,\eta}_{w}}&\lesssim 1+\|\xi\|+T^{1-2\gamma}(1+\|y\|_{\infty,-2\gamma}+\|y\|_{\infty,0})\\
&\quad+T^{2\gamma-\eta}\|(g(y))^{'}\|_{\infty,0}+T^{\gamma-\eta}\|g(y),(g(y))^{'}\|_{w,2\gamma,-2\gamma}\\
&\lesssim1+\|\xi\|+T^{2\gamma-\eta}(1+\|y\|_{\infty,-2\gamma}+\|y\|_{\infty,0}+\|g(y)\|_{\infty,0})\\
&\quad+T^{\gamma-\eta}\|y,g(y)\|_{\mathcal{D}^{2\gamma,\eta}_{w}}\\
&\lesssim1+\|\xi\|+T^{\gamma-\eta}\|y,g(y)\|_{\mathcal{D}^{2\gamma,\eta}_{w}}.
\end{aligned}
\end{equation}
Finally we obtain the desired result.
\end{proof}

Applying a concatenation discussion of \cite{MR4097587} and \cite{MR11111}, according to \eqref{2.21} we obtain an a-priori bound for the solution of \eqref{2.2}. The technique of proof is identical to the one of \cite{MR4097587} Lemma 5.8, we omit here.
\begin{lemma}\label{Lemma2.6}
Let $T>0$, $(y,g(y))\in\mathcal{D}^{2\gamma,\eta}_{w}([0,T];\mathcal{H})$ be the solution of \eqref{2.2}, where the initial condition $y_{0}=\xi\in\mathcal{H}$ with $\|\xi\|\leq\rho$. Let $\tilde{r}=1\vee\rho$, then there exists constant $M$ such that
$$\|y\|_{\infty,0,[0,T]}\leq M\tilde{r}e^{MT}.$$
\end{lemma}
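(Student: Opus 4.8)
The plan is to bootstrap from the local a-priori estimate in Corollary \ref{Corollary2.1} by a greedy concatenation argument, exactly in the spirit of Lemma 5.8 of \cite{MR4097587}. The first --- and only delicate --- point is to pin down what the implicit constant in \eqref{2.21} depends on: inspecting its proof, it is governed only by $\gamma$, $d$, the coefficients $f,g$ (and the bounds of the derivatives of $g$), and the rough-path seminorms $|w|_{\gamma}$, $|w^{2}|_{2\gamma}$ \emph{over the whole interval $[0,T]$}. Since these seminorms are non-increasing under restriction of the time interval and since \eqref{2.2} is autonomous, Corollary \ref{Corollary2.1} then holds, \emph{with one and the same constant} $C=C(\gamma,d,f,g,\mathbf{w})$, on every subinterval $[a,b]\subseteq[0,T]$ with $b-a\le 1$, the role of $\xi$ being played by $y_{a}$; here one invokes the uniqueness in Theorem \ref{Theorem2.2} to identify $(y,g(y))|_{[a,b]}$ with the solution started from the datum $y_{a}$ and the time-shifted rough path.

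With this in hand I would fix once and for all a step size $\tau\in(0,1]$ so small that $C\tau^{\gamma-\eta}\le\tfrac{1}{2}$. Applying Corollary \ref{Corollary2.1} on an interval $[a,a+\tau]\subseteq[0,T]$ and absorbing the last term of \eqref{2.21} into the left-hand side gives
\[
\|y,g(y)\|_{\mathcal{D}^{2\gamma,\eta}_{w}([a,a+\tau];\mathcal{H})}\le 2C\bigl(1+\|y_{a}\|\bigr),
\]
uniformly in $a$. Combining this with the elementary bound $\|y\|_{\infty,0,[a,a+\tau]}\le C\|y_{a}\|+\tau^{\eta}\|y\|_{\eta,0,[a,a+\tau]}$ (which follows from $\hat{\delta}y_{t,a}=y_{t}-S_{ta}y_{a}$ and \eqref{2.1} with $\alpha=\beta=0$) and with $\tau\le1$, one obtains a clean recursion inequality $\|y\|_{\infty,0,[a,a+\tau]}\le C_{1}(1+\|y_{a}\|)$ with $C_{1}$ independent of the base point $a$.

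It then remains to iterate. Setting $t_{k}=(k\tau)\wedge T$ and $a_{k}=\|y_{t_{k}}\|$, the previous inequality on $[t_{k},t_{k+1}]$ gives $1+a_{k+1}\le(1+C_{1})(1+a_{k})$, hence $1+a_{k}\le(1+C_{1})^{k}(1+\|\xi\|)\le 2\tilde{r}\,(1+C_{1})^{k}$, using $1+\|\xi\|\le 1+\rho\le 2\tilde{r}$. Since $[0,T]$ is covered by at most $N\le T/\tau+1$ such intervals, taking the maximum over $k$ yields $\|y\|_{\infty,0,[0,T]}\le 2C_{1}\tilde{r}\,(1+C_{1})^{T/\tau+1}$, and any $M\ge\max\{2C_{1}(1+C_{1}),\ \tau^{-1}\log(1+C_{1})\}$ turns the right-hand side into $M\tilde{r}e^{MT}$, which is the asserted bound.

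The substantive work, then, is entirely in the first paragraph: making sure that the local estimate \eqref{2.21} is \emph{uniform} over all length-$\tau$ subintervals of $[0,T]$, so that the constant $C_{1}$ does not silently blow up as the base point varies or as the number of pieces grows. This is secured by bounding the subinterval rough-path seminorms by their values on $[0,T]$ and by freezing $\tau$ before starting the iteration; everything afterwards is the routine Gronwall-type bookkeeping indicated above.
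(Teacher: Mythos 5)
Your proposal is correct and follows essentially the same route the paper intends: the paper omits the proof and refers to the concatenation argument of Lemma 5.8 in \cite{MR4097587}, which is exactly your scheme of applying the a-priori bound \eqref{2.21} on subintervals of a fixed small length (chosen so that the $T^{\gamma-\eta}$ term can be absorbed, with rough-path seminorms on subintervals dominated by those on $[0,T]$) and then iterating to get the exponential bound $M\tilde{r}e^{MT}$. No gap to report.
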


Lemma \ref{Lemma2.6} guarantees that the solution of \eqref{2.2} does not explode in any finite time, therefore, in $\mathcal{D}^{2\gamma,\eta}_{w}([0,T];\mathcal{H})$, according to above preliminary results, base on Theorem \ref{Theorem2.2}, we have the following result that the local solution of \eqref{2.2} can be extended to global one by a standard concatenation discussion, the details of proof one can refer to \cite{MR4097587} Theorem 5.10 and \cite{MR11111} Theorem 3.9, we omit here.
\begin{theorem}\label{theorem3}
Let $T>0$, given $\xi\in\mathcal{H}$ and $\mathbf{w}=(w,w^{2})\in\mathscr{C}^{\gamma}([0,T];\mathbb{R}^{d})$. The rough evolution equation \eqref{2.2} has a unique global solution represented by a mildly controlled rough path $(y,y^{'})\in\mathcal{D}^{2\gamma,\eta}_{w}([0,T];\mathcal{H})$ given by
\begin{equation}\label{2.22}
\begin{aligned}
(y,y^{'})=\left(S_{\cdot}\xi+\int^{\cdot}_{0}S_{\cdot u}f(y_{u})du+\int^{\cdot}_{0}S_{\cdot u}g(y_{u})d\mathbf{w}_{u},g(y)\right).
\end{aligned}
\end{equation}
\end{theorem}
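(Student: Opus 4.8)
The plan is to promote the local solution furnished by Theorem \ref{Theorem2.2} to a global one by iterating the local construction on abutting time windows, the uniform control on the window length being supplied by the a-priori bound of Lemma \ref{Lemma2.6}. First I would extract the quantitative content implicit in the proof of Theorem \ref{Theorem2.2}: the local existence time there can be chosen as $T_{0}=T_{0}(\rho,K)>0$, non-increasing in an upper bound $\rho$ for $\|\xi\|$ and in an upper bound $K$ for $|w|_{\gamma}+|w^{2}|_{2\gamma}$, and this dependence is uniform over $T_{0}\in(0,1]$ because every constant appearing in Lemmas \ref{Lemma2.2}--\ref{Lemma2.5} is uniform over $T\in(0,1]$. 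By the same token, for any restart time $\tau\in[0,T)$ and any initial datum $\zeta\in\mathcal{H}$, the equation started at time $\tau$ from $\zeta$ and driven by $\mathbf{w}$ restricted to $[\tau,T]$ has, on $[\tau,(\tau+T_{0}(\|\zeta\|,K))\wedge T]$, a unique mildly controlled rough path solution with Gubinelli derivative $g$ of that solution; here one fixes once and for all $K:=|w|_{\gamma;[0,T]}+|w^{2}|_{2\gamma;[0,T]}$, which dominates the corresponding seminorms on every subinterval, so the same $K$ is admissible at every restart.

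Next I would invoke Lemma \ref{Lemma2.6}: applied to the solution already constructed on some interval $[0,t_{k}]\subseteq[0,T]$, it gives $\sup_{t\in[0,t_{k}]}\|y_{t}\|\le M\tilde r e^{MT}=:\rho^{\ast}$ with $\tilde r=1\vee\|\xi\|$, where (enlarging $M$ if necessary) $\rho^{\ast}\ge\|\xi\|$ depends only on $\|\xi\|$ and $T$ — in particular not on how far the solution has been continued, nor on $k$. Put $h^{\ast}:=T_{0}(\rho^{\ast},K)>0$. Since $\rho^{\ast}\ge\|\xi\|$, the first step $[0,t_{1}]$ may already be taken with $t_{1}\ge h^{\ast}$; evaluating at $t_{1}$ gives $\|y_{t_{1}}\|\le\rho^{\ast}$, so restarting at $t_{1}$ from $y_{t_{1}}$ yields a solution on $[t_{1},t_{2}]$ with $t_{2}-t_{1}\ge h^{\ast}$, and iterating, every increment is at least $h^{\ast}$, so after at most $\lceil T/h^{\ast}\rceil$ steps the construction exhausts $[0,T]$. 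Local uniqueness (Theorem \ref{Theorem2.2}) forces consecutive pieces to coincide on overlaps, hence they glue to a single continuous $y\colon[0,T]\to\mathcal{H}$, and we set $y':=g(y)$.

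It then remains to check that $(y,g(y))$ genuinely lies in $\mathcal{D}^{2\gamma,\eta}_{w}([0,T];\mathcal{H})$ and satisfies \eqref{2.20} on all of $[0,T]$. Additivity of the rough integral \eqref{2.6} over two abutting subintervals shows that $y$ satisfies the mild formulation \eqref{2.20} on $[0,T]$; feeding this back into Theorem \ref{Theorem2.1}, Lemma \ref{Lemma2.2} and Lemma \ref{Lemma2.5} (as in the proof of Lemma \ref{Lemma2.6}, splitting $[0,T]$ into finitely many unit-length windows if $T>1$) yields finiteness of the $\mathcal{D}^{2\gamma,\eta}_{w}([0,T];\mathcal{H})$-norm, so $(y,g(y))$ is a global solution. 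Global uniqueness is inherited from local uniqueness by the same step-by-step comparison: two global solutions agree on $[0,t_{1}]$, hence on $[t_{1},t_{2}]$, and so on across the partition.

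The step I expect to be the main obstacle is the bookkeeping for the two-parameter (``cross-interval'') quantities when $s$ and $t$ lie in different pieces of the partition — controlling $\hat\delta y_{t,s}$, $R^{y}_{t,s}$ and the analogous objects for $g(y)$ uniformly over all such pairs. This is handled via Chen's relation, the semigroup decomposition $\hat\delta y_{t,s}=\hat\delta y_{t,u}+S_{tu}\hat\delta y_{u,s}$ together with its counterpart $\hat\delta p_{t,u,s}$ for the remainder, and the smoothing estimates \eqref{2.1}; it is precisely this part of the argument that is routine but lengthy and that parallels \cite{MR4097587} and \cite{MR11111}, which is why the detailed computation is omitted above.
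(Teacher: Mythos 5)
Your proposal is correct and is essentially the argument the paper itself invokes: it combines the local well-posedness of Theorem \ref{Theorem2.2} with the a-priori bound of Lemma \ref{Lemma2.6} to get a uniform step size and then concatenates, which is exactly the ``standard concatenation discussion'' the paper delegates to \cite{MR4097587} (Theorem 5.10) and \cite{MR11111} (Theorem 3.9) without writing it out. You simply supply the bookkeeping (uniform local time $T_{0}(\rho^{\ast},K)$, gluing via $\hat\delta y_{t,s}=\hat\delta y_{t,u}+S_{tu}\hat\delta y_{u,s}$, additivity of the rough integral) that the paper omits.
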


\begin{remark}\label{remark4}
We emphasis the fact that the solution of \eqref{2.2} is global in time. Rough paths and rough drivers are usually defined on compact intervals, according to \cite{MR3624539} and \cite{MR11111}, we say $\mathbf{w}=(w,w^{2})\in\mathscr{C}^{\gamma}(\mathbb{R};\mathbb{R}^{d})$ is a $\gamma$-H\"{o}lder rough path if $\mathbf{w}|_{I}\in\mathscr{C}^{\gamma}(I;\mathbb{R}^{d})$ for every compact interval $I\subseteq\mathbb{R}$ containing 0. Hence, in our setting, we have that $(y,y^{'})\in\mathcal{D}^{2\gamma,\eta}_{w}([0,\infty);\mathcal{H})$ if $(y,y^{'})\in\mathcal{D}^{2\gamma,\eta}_{w}([0,T];\mathcal{H})$ for every $T>0$, therefore, we set $C_{g}|_{[0,\infty)}=\mathop{\max}\limits_{I\subseteq[0,\infty)}C_{g}|_{I},\varrho(w,0)|_{[0,\infty)}=\mathop{\max}\limits_{I\subseteq[0,\infty)}\varrho(w,0)|_{I}$, according to Theorem 2.2, let  $r=2C_{L_{f},g,\varrho(w,0)}|_{[0,\infty)}$ and previous deliberations we have that $r$ is keep invariant in concatenation arguments and one can obtain a unique solution of \eqref{2.2} in $\mathcal{D}^{2\gamma,\eta}_{w}([0,\infty);\mathcal{H})$.
\end{remark}
\subsection{Truncated rough evolution equation}
We will prove a  global unstable manifold for a version of \eqref{2.2} with cut-off over a random neighborhood in the following section 4. Hence we will construct a local unstable manifolds depend on the size of perturbations and the spectral gap of the linear part of \eqref{2.2}. In order to consider the existence of local invariant manifold by using Lyapunov-Perron method, in this subsection, we modify these nonlinear $f$ and $g$ by applying appropriate cut-off techniques to make their Lipschitz constants small enough. Since in contrast to the classical cut-off techniques (as in \cite{MR3376184}, \cite{MR2016614}, \cite{MR2110052} and so on), in our case, similar to \cite{MR11112} and \cite{MR4284415}, we truncate the norm of mildly controlled rough path $(y,y^{'})$. Due to the technical reasons of Lyapunov-Perron method, which we will use in section 4, we fix the time interval as $[0,1]$ in this subsection.

Meanwhile, we assume the following restrictions on the drift and diffusion coefficients:
\begin{itemize}
\item $f\in\mathcal{ C}^{1}_{-2\gamma,0}(\mathcal{H}, \mathcal{H})$ is global Lipschitz continuous with $ f(0)=Df(0)=0;$

\item $g\in\mathcal{ C}^{3}_{-2\gamma,0}(\mathcal{H}, \mathcal{H}^{d})$  with $ g(0)=Dg(0)=D^{2}g(0)=0,$
\end{itemize}
so one easily obtains that $(y=0,y^{'}=0)$ is a stationary solution of \eqref{2.2}.

Let $\chi:\mathcal{D}^{2\gamma,\eta}_{w}([0,1];\mathcal{H})\rightarrow\mathcal{D}^{2\gamma,\eta}_{w}([0,1];\mathcal{H})$ be a  Lipschitz continuous cut-off function:
$$\chi(y):=\left\{
\begin{aligned}
y,& \qquad \|y,y^{'}\|_{\mathcal{D}^{2\gamma,\eta}_{w}}\leq\frac{1}{2},    \\
0,& \qquad \|y,y^{'}\|_{\mathcal{D}^{2\gamma,\eta}_{w}}\geq1.
\end{aligned}
\right.
$$
As examples in subsection 2.1 of \cite{MR4284415}, we can take $\varphi:\mathbb{R}^{+}\rightarrow[0,1]$ is a $C^{3}_{b}$ Lipschitz cut-off function,
then $\chi(y)$ can be constructed as
$$\chi(y)=y\varphi(\|y,y^{'}\|_{\mathcal{D}^{2\gamma,\eta}_{w}}).$$
In the following, we assume that $\chi$ is constructed by $\varphi$. According to Definition \ref{Definition2.2}, one has
$$\chi^{'}(y)=y^{'}\varphi(\|y,y^{'}\|_{\mathcal{D}^{2\gamma,\eta}_{w}}),$$
this construction indicates that
$$(\chi(y),\chi^{'}(y)):=\left\{
\begin{aligned}
(y,y^{'}),& \qquad \|y,y^{'}\|_{\mathcal{D}^{2\gamma,\eta}_{w}}\leq\frac{1}{2},    \\
0,& \qquad \|y,y^{'}\|_{\mathcal{D}^{2\gamma,\eta}_{w}}\geq1.
\end{aligned}
\right.
$$
For a positive number $R$, we define
$$\chi_{R}(y)=R\chi(y/R),$$
this means that
$$\chi_{R}(y):=\left\{
\begin{aligned}
y,& \qquad \|y,y^{'}\|_{\mathcal{D}^{2\gamma,\eta}_{w}}\leq\frac{R}{2},    \\
0,& \qquad \|y,y^{'}\|_{\mathcal{D}^{2\gamma,\eta}_{w}}\geq R,
\end{aligned}
\right.
$$
then
$$(\chi_{R}(y),\chi^{'}_{R}(y)):=\left\{
\begin{aligned}
(y,y^{'}),& \qquad \|y,y^{'}\|_{\mathcal{D}^{2\gamma,\eta}_{w}}\leq\frac{R}{2},    \\
0,& \qquad \|y,y^{'}\|_{\mathcal{D}^{2\gamma,\eta}_{w}}\geq R.
\end{aligned}
\right.
$$
For a mildly controlled rough path $(y,y^{'})\in\mathcal{D}^{2\gamma,\eta}_{w}([0,1];\mathcal{H})$, we introduce the operators
$$f_{R}(y_{t}):=f\circ \chi_{R}(y_{t}),\qquad g_{R}(y_{t}):=g\circ \chi_{R}(y_{t}).$$
Base on Lemma \ref{Lemma2.1}, we obtain the mildly Gubinelli derivative of $g_{R}(y)$ :
$$(g_{R}(y))^{'}=Dg(\chi_{R}(y))\chi_{R}^{'}(y)=Dg(y \varphi(\|y,y^{'}\|_{\mathcal{D}^{2\gamma,\eta}_{w}}/R))y^{'} \varphi(\|y,y^{'}\|_{\mathcal{D}^{2\gamma,\eta}_{w}}/R).$$
It directly obtained that if $\|y,y^{'}\|_{\mathcal{D}^{2\gamma,\eta}_{w}}\leq R/2$, we have that $f_{R}(y)=f(y)$ and $g_{R}(y)=g(y)$.



Next, we will discuss the Lipschitz continuity of $f_{R}$ and $g_{R}$, and the Lipschitz constants are supposed to be strictly increasing in $R$.
\begin{lemma}\label{Lemma2.7}
Let $(y,y^{'})$ and $(v,v^{'})\in\mathcal{D}^{2\gamma,\eta}_{w}([0,1];\mathcal{H})$, then there exists a constant $C=C_{f,\chi,|w|_{\gamma}}$ such that
\begin{equation}\label{2.23}
\begin{aligned}
\|f_{R}(y)-f_{R}(v)\|_{\infty,0}+\|f_{R}(y)-f_{R}(v)\|_{\infty,-2\gamma}\leq CR\|y-v,y^{'}-v^{'}\|_{\mathcal{D}^{2\gamma,\eta}_{w}}.
\end{aligned}
\end{equation}
\end{lemma}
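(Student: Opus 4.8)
The plan is to reduce the estimate for $f_R$ to the Lipschitz continuity of $f$ (with $f(0)=Df(0)=0$), the Lipschitz continuity of the cut-off map $\chi_R$ in the norm $\|\cdot,\cdot\|_{\mathcal{D}^{2\gamma,\eta}_{w}}$, and the embeddings of $\mathcal{D}^{2\gamma,\eta}_{w}([0,1];\mathcal{H})$ into $\mathcal{C}([0,1];\mathcal{H})$ and $\mathcal{C}([0,1];\mathcal{H}_{-2\gamma})$. First I would observe that, since $f(0)=0$, global Lipschitz continuity of $f$ on $\mathcal{H}_{-2\gamma}$ gives $\|f(a)-f(b)\|_{\mathcal{H}_{-2\gamma}}\le L_f\|a-b\|_{\mathcal{H}_{-2\gamma}}$ and, taking $b=0$, $\|f(a)\|_{\mathcal{H}_{-2\gamma}}\le L_f\|a\|_{\mathcal{H}_{-2\gamma}}$; the same holds on $\mathcal{H}$ (i.e. in $\|\cdot\|_{\infty,0}$). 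Hence for each $t\in[0,1]$,
\begin{equation}\nonumber
\|f_R(y_t)-f_R(v_t)\|_{\mathcal{H}_{-2\gamma}}=\|f(\chi_R(y)_t)-f(\chi_R(v)_t)\|_{\mathcal{H}_{-2\gamma}}\le L_f\|\chi_R(y)_t-\chi_R(v)_t\|_{\mathcal{H}_{-2\gamma}},
\end{equation}
and likewise in $\mathcal{H}$, so that $\|f_R(y)-f_R(v)\|_{\infty,0}+\|f_R(y)-f_R(v)\|_{\infty,-2\gamma}\le L_f\big(\|\chi_R(y)-\chi_R(v)\|_{\infty,0}+\|\chi_R(y)-\chi_R(v)\|_{\infty,-2\gamma}\big)$. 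Since both of those sup-norms are dominated by $\|\chi_R(y)-\chi_R(v),\chi_R'(y)-\chi_R'(v)\|_{\mathcal{D}^{2\gamma,\eta}_{w}}$ (recall the embedding estimates such as $\|\cdot\|_{\infty,0}\lesssim T^{\eta}\|\cdot\|_{\eta,0}+\|\cdot_0\|_{\mathcal H}$ used repeatedly in Lemmas~\ref{Lemma2.3}--\ref{Lemma2.5}, with $T=1$ here), it remains to control $\|\chi_R(y)-\chi_R(v),\chi_R'(y)-\chi_R'(v)\|_{\mathcal{D}^{2\gamma,\eta}_{w}}$ by $R\,\|y-v,y'-v'\|_{\mathcal{D}^{2\gamma,\eta}_{w}}$.

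For that last step I would use the explicit form $\chi_R(y)=R\,y\,\varphi(\|y,y'\|_{\mathcal{D}^{2\gamma,\eta}_{w}}/R)$ and $\chi_R'(y)=R\,y'\,\varphi(\|y,y'\|_{\mathcal{D}^{2\gamma,\eta}_{w}}/R)$. Writing $\Phi(y):=\varphi(\|y,y'\|_{\mathcal{D}^{2\gamma,\eta}_{w}}/R)$, the difference decomposes as
\begin{equation}\nonumber
\chi_R(y)-\chi_R(v)=R\big(y-v\big)\Phi(y)+R\,v\,\big(\Phi(y)-\Phi(v)\big),
\end{equation}
and similarly for the Gubinelli-derivative component with $y',v'$ in place of $y,v$. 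Since $\varphi$ is bounded by $1$, the first terms are handled by the fact that $\mathcal{D}^{2\gamma,\eta}_{w}$ is a normed space on which multiplication by a scalar acts isometrically, giving a contribution $\le R\,\|y-v,y'-v'\|_{\mathcal{D}^{2\gamma,\eta}_{w}}$. For the second terms, $\varphi$ being $C^3_b$ and in particular Lipschitz, together with the reverse triangle inequality $\big|\,\|y,y'\|_{\mathcal{D}^{2\gamma,\eta}_{w}}-\|v,v'\|_{\mathcal{D}^{2\gamma,\eta}_{w}}\,\big|\le\|y-v,y'-v'\|_{\mathcal{D}^{2\gamma,\eta}_{w}}$, yields $|\Phi(y)-\Phi(v)|\le \mathrm{Lip}(\varphi)\,R^{-1}\|y-v,y'-v'\|_{\mathcal{D}^{2\gamma,\eta}_{w}}$; and since $\varphi$ is supported (up to the plateau) so that $v\,\Phi(v)$-type factors are controlled — more precisely, on the set where $\Phi(y)\ne\Phi(v)$ one has $\|v,v'\|_{\mathcal{D}^{2\gamma,\eta}_{w}}\lesssim R$ — the product $R\,\|v,v'\|_{\mathcal{D}^{2\gamma,\eta}_{w}}\,|\Phi(y)-\Phi(v)|$ is again $\lesssim R\,\|y-v,y'-v'\|_{\mathcal{D}^{2\gamma,\eta}_{w}}$. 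Collecting the four contributions (two for $\chi_R$, two for $\chi_R'$) and feeding the result into the reduction of the first paragraph gives \eqref{2.23} with $C=C_{f,\chi,|w|_\gamma}$; the dependence on $|w|_\gamma$ enters only through the embedding constants relating $\|\cdot\|_{\infty,\cdot}$ to $\|\cdot,\cdot\|_{\mathcal{D}^{2\gamma,\eta}_w}$ (cf.\ \eqref{2.5}).

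The main obstacle I anticipate is the bookkeeping for the term $R\,v\,(\Phi(y)-\Phi(v))$ and its derivative analogue: one must argue carefully that whenever $\varphi(\|y,y'\|/R)\ne\varphi(\|v,v'\|/R)$ at least one of the two norms lies in the region where $\varphi$ is non-constant, i.e.\ both norms are $\lesssim R$, so that the factor $\|v,v'\|_{\mathcal{D}^{2\gamma,\eta}_w}$ can be absorbed into the constant times $R$ rather than blowing up. This is exactly the standard device (used in \cite{MR4284415} and \cite{MR11112}) that makes the cut-off Lipschitz constant scale linearly in $R$ and, crucially, strictly increasing in $R$, which is what later sections need for the Lyapunov--Perron contraction. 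Everything else is routine application of the triangle inequality and the embeddings already recorded in the excerpt.
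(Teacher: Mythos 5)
Your reduction breaks down at the key step, because it rests on a misreading of the cut-off. In the paper $\chi_R(y)=R\chi(y/R)=y\,\varphi\bigl(\|y,y'\|_{\mathcal{D}^{2\gamma,\eta}_{w}}/R\bigr)$ (there is no extra prefactor $R$): indeed $\chi_R(y)=y$ whenever $\|y,y'\|_{\mathcal{D}^{2\gamma,\eta}_{w}}\leq R/2$, which is incompatible with your formula $\chi_R(y)=R\,y\,\varphi(\cdot)$ except for $R=1$. Consequently your central claim, that $\chi_R$ is Lipschitz in the $\mathcal{D}^{2\gamma,\eta}_{w}$-norm with constant of order $R$, is false: taking $y,v$ with norms at most $R/2$ gives $\chi_R(y)-\chi_R(v)=y-v$, so the Lipschitz constant of $\chi_R$ is of order one, uniformly in $R$ (and this is all the paper uses about $\chi_R$). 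Feeding an $O(1)$-Lipschitz truncation into the plain global Lipschitz bound $\|f(a)-f(b)\|\leq L_f\|a-b\|$, which is the only property of $f$ you actually invoke, yields an estimate with a constant independent of $R$, not the bound \eqref{2.23} with the factor $R$ that the later Lyapunov--Perron contraction needs.

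The missing idea is the one assumption you mention but never use: $Df(0)=0$. The paper writes $f(\chi_R(y_t))-f(\chi_R(v_t))=\int_0^1 Df\bigl(r\chi_R(y_t)+(1-r)\chi_R(v_t)\bigr)dr\,\bigl(\chi_R(y_t)-\chi_R(v_t)\bigr)$ and uses $Df(0)=0$ (together with the regularity of $Df$) to bound the derivative along the segment by $C_f\max\{\|\chi_R(y_t)\|_{\mathcal{H}_{-2\gamma}},\|\chi_R(v_t)\|_{\mathcal{H}_{-2\gamma}}\}$; by \eqref{2.24} this maximum is at most $C_{|w|_\gamma}R$, and that is the sole source of the factor $R$. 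The remaining factor $\|y-v,(y-v)'\|_{\mathcal{D}^{2\gamma,\eta}_{w}}$ then comes from the $R$-independent Lipschitz estimate for $\chi_R$ (where, as you correctly anticipate, one uses that $\|D\varphi\|_\infty$-terms only act where $\|v,v'\|_{\mathcal{D}^{2\gamma,\eta}_{w}}\lesssim R$, so the $1/R$ is absorbed). So the linear-in-$R$ scaling is produced by the smallness of $Df$ at arguments of size $O(R)$, not by the cut-off map itself; your argument needs to be restructured along these lines to prove \eqref{2.23}.
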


\begin{proof}
 We easily have
 $$\sup_{t\in[0,1]}\|f_{R}(y_{t})-f_{R}(v_{t})\|_{\mathcal{H}_{-2\gamma}}=\sup_{t\in[0,1]}\|f(\chi_{R}(y_{t}))-f(\chi_{R}(v_{t}))\|_{\mathcal{H}_{-2\gamma}}.$$
Firstly, since $f\in\mathcal{ C}^{1}_{-2\gamma,0}(\mathcal{H}, \mathcal{H})$ is global Lipschitz continuous and $Df(0)=0$, thus we have
\begin{equation}\nonumber
\begin{aligned}
&\|f(\chi_{R}(y_{t}))-f(\chi_{R}(v_{t}))\|_{\mathcal{H}_{-2\gamma}}\\
&\leq\int^{1}_{0}\|Df(r\chi_{R}(y_{t})+(1-r)\chi_{R}(v_{t}))\|_{\mathcal{L}(\mathcal{H}_{-2\gamma},\mathcal{H}_{-2\gamma})}dr \|\chi_{R}(y_{t})-\chi_{R}(v_{t})\|_{\mathcal{H}_{-2\gamma}}\\
&\leq C_{f}\max\{\|\chi_{R}(y_{t})\|_{\mathcal{H}_{-2\gamma}},\|\chi_{R}(v_{t})\|_{\mathcal{H}_{-2\gamma}}\}\|\chi_{R}(y_{t})-\chi_{R}(v_{t})\|_{\mathcal{H}_{-2\gamma}}.
\end{aligned}
\end{equation}
Secondly, due to $$\|y\|_{\infty,-2\gamma}\leq\|y_{0}\|_{\mathcal{H}_{-2\gamma}}+\|y\|_{\gamma,-2\gamma}\leq\|y_{0}\|_{\mathcal{H}}+\|y\|_{\gamma,-2\gamma}$$
and
\begin{equation}\nonumber
\begin{aligned} \|y\|_{\gamma,-2\gamma}&\leq(1+|w|_{\gamma})(\|y^{'}_{0}\|_{\mathcal{H}^{d}_{-2\gamma}}+\|y,y^{'}\|_{w,2\gamma,-2\gamma})\\
&\leq(1+|w|_{\gamma})\|y,y^{'}\|_{\mathcal{D}^{2\gamma,\eta}_{w}},
\end{aligned}
\end{equation}
 hence we obtain
\begin{equation}\nonumber
\begin{aligned}
\|y\|_{\infty,-2\gamma}&\leq \|y_{0}\|_{\mathcal{H}}+(1+|w|_{\gamma})(\|y^{'}_{0}\|_{\mathcal{H}^{d}_{-2\gamma}}+\|y,y^{'}\|_{w,2\gamma,-2\gamma})\\
&\leq(1+|w|_{\gamma})(\|y_{0}\|_{\mathcal{H}}+\|y^{'}_{0}\|_{\mathcal{H}^{d}_{-2\gamma}}+\|y,y^{'}\|_{w,2\gamma,-2\gamma})\\
&\leq(1+|w|_{\gamma})\|y,y^{'}\|_{\mathcal{D}^{2\gamma,\eta}_{w}}.
\end{aligned}
\end{equation}
In addition,
\begin{equation}\label{2.24}
\begin{aligned}
\|\chi_{R}(y)\|_{\infty,-2\gamma}&=\|y\varphi(\|y,y^{'}\|_{\mathcal{D}^{2\gamma,\eta}_{w}}/R)\|_{\infty,-2\gamma}
=\|y\|_{\infty,-2\gamma}\varphi(\|y,y^{'}\|_{\mathcal{D}^{2\gamma,\eta}_{w}}/R)\\
&\leq C(1+|w|_{\gamma})\|y,y^{'}\|_{\mathcal{D}^{2\gamma,\eta}_{w}}\leq C_{|w|_{\gamma}}R,
\end{aligned}
\end{equation}
and $\varphi:\mathbb{R}^{+}\rightarrow[0,1]$ is $C^{3}_{b}$, then we have
\begin{equation}
\begin{aligned}\nonumber
\|\chi_{R}(y_{t})-\chi_{R}(v_{t})\|_{\mathcal{H}_{-2\gamma}}&=\|y_{t}\varphi(\|y,y^{'}\|_{\mathcal{D}^{2\gamma,\eta}_{w}}/R)-v_{t}\varphi(\|v,v^{'}\|_{\mathcal{D}^{2\gamma,\eta}_{w}}/R)\|_{\mathcal{H}_{-2\gamma}}\\
&\leq\|(y_{t}-v_{t})\varphi(\|y,y^{'}\|_{\mathcal{D}^{2\gamma,\eta}_{w}}/R)\|_{\mathcal{H}_{-2\gamma}}\\
&\quad+\|v_{t}(\varphi(\|y,y^{'}\|_{\mathcal{D}^{2\gamma,\eta}_{w}}/R)-\varphi(\|v,v^{'}\|_{\mathcal{D}^{2\gamma,\eta}_{w}}/R))\|_{\mathcal{H}_{-2\gamma}}\\
&\leq \|y-v\|_{\infty,-2\gamma}\\
&\quad+\|v\|_{\infty,-2\gamma}\|D\varphi\|_{\infty}(\|y,y^{'}\|_{\mathcal{D}^{2\gamma,\eta}_{w}}/R-\|v,v^{'}\|_{\mathcal{D}^{2\gamma,\eta}_{w}}/R)\\
&\leq C_{\chi,|w|_{\gamma}}\|y-v,(y-v)^{'}\|_{\mathcal{D}^{2\gamma,\eta}_{w}}.
\end{aligned}
\end{equation}
Consequently, we have
\begin{equation}\nonumber
\begin{aligned}
\|f(\chi_{R}(y))-f(\chi_{R}(v))\|_{\infty,-2\gamma}
&\leq C_{\chi,|w|_{\gamma},f}R\|y-v,(y-v)^{'}\|_{\mathcal{D}^{2\gamma,\eta}_{w}}.
\end{aligned}
\end{equation}
Similarly, we have
$$
\|f(\chi_{R}(y))-f(\chi_{R}(v))\|_{\infty,0}\leq C_{\chi,f}R\|y-v,(y-v)^{'}\|_{\mathcal{D}^{2\gamma,\eta}_{w}}.
$$
Finally, according to above estimates, we easily obtain the desired result.
\end{proof}

\begin{lemma}\label{Lemma2.8}
Let $(y,y^{'})$ and $(v,v^{'})\in\mathcal{D}^{2\gamma,\eta}_{w}([0,1];\mathcal{H})$, then there exists a constant $C=C[g,\chi,|w|_{\gamma}]$ such that
\begin{equation}\label{2.25}
\begin{aligned}
\|g_{R}(y)-g_{R}(v),(g_{R}(y)-g_{R}(v))^{'}\|_{\mathscr{D}^{2\gamma,2\gamma,0}_{S,w}}\leq C(R)\|y-v,(y-v)^{'}\|_{\mathcal{D}^{2\gamma,\eta}_{w}}.
\end{aligned}
\end{equation}
\end{lemma}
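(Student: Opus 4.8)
The plan is to mimic the structure of Lemma \ref{Lemma2.3}, but with the cut-off functions $\chi_R$ inserted, exploiting three facts: first, $g_R(y) = g(\chi_R(y))$ with $g\in\mathcal{C}^3_{-2\gamma,0}(\mathcal{H},\mathcal{H}^d)$ and $g(0)=Dg(0)=D^2g(0)=0$, so $g$ has a cubic-type control near the origin; second, the map $(y,y')\mapsto(\chi_R(y),\chi_R'(y))$ is Lipschitz from $\mathcal{D}^{2\gamma,\eta}_w([0,1];\mathcal{H})$ into itself, which is exactly the content of the cut-off construction (it was already used in the proof of Lemma \ref{Lemma2.7} via the estimate \eqref{2.24} and the bound on $\|\chi_R(y_t)-\chi_R(v_t)\|$); and third, when $\|y,y'\|_{\mathcal{D}^{2\gamma,\eta}_w}\geq R$ the truncated quantity is identically zero, so one only ever needs to estimate on the region where the relevant norms are bounded by $R$.

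First I would record the composition bound: set $\tilde y = \chi_R(y)$, $\tilde v = \chi_R(v)$, which both lie in $\mathcal{D}^{2\gamma,\eta}_w([0,1];\mathcal{H})$ with $\mathcal{D}^{2\gamma,\eta}_w$-norm bounded by a constant multiple of $R$ (from \eqref{2.24} and the analogous estimates for the Gubinelli derivative and remainder, which follow because $\varphi$ is $C^3_b$). Then I would apply Lemma \ref{Lemma2.3} with $y,v$ there replaced by $\tilde y,\tilde v$: this gives
\[
\|g(\tilde y)-g(\tilde v),(g(\tilde y)-g(\tilde v))'\|_{\mathscr{D}^{2\gamma,2\gamma,0}_{S,w}}\leq C_{CR,g,1}(1+|w|_\gamma)^2\|\tilde y-\tilde v,(\tilde y-\tilde v)'\|_{\mathcal{D}^{2\gamma,\eta}_w},
\]
where the constant now depends on $R$ through the bound $CR$ on the norms. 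The second step is to control the right-hand side by $\|y-v,(y-v)'\|_{\mathcal{D}^{2\gamma,\eta}_w}$; this is exactly the Lipschitz continuity of the cut-off map $\chi_R$ in the full $\mathcal{D}^{2\gamma,\eta}_w$-norm. For the $\mathcal{H}_{-2\gamma}$, $\mathcal{H}$ and Hölder-seminorm components this is precisely the kind of computation carried out at the end of Lemma \ref{Lemma2.7}; the remaining pieces (the Gubinelli-derivative seminorm $\|\cdot\|_{\infty,\mathcal{H}^d_{-2\gamma}}$ and the remainder seminorm $|R^{\cdot}|_{2\gamma,-2\gamma}$) are handled by expanding $\chi_R(y)-\chi_R(v) = (y-v)\varphi(\|y,y'\|/R) + v(\varphi(\|y,y'\|/R)-\varphi(\|v,v'\|/R))$, differentiating, and using that $\varphi,D\varphi,D^2\varphi$ are bounded together with the crude inequality $|\,\|y,y'\|_{\mathcal{D}^{2\gamma,\eta}_w}-\|v,v'\|_{\mathcal{D}^{2\gamma,\eta}_w}\,|\leq\|y-v,(y-v)'\|_{\mathcal{D}^{2\gamma,\eta}_w}$; on the support of $\varphi$ the prefactor $\|v\|$-type norms are $\lesssim R$, so each term is $\lesssim C(R)\|y-v,(y-v)'\|_{\mathcal{D}^{2\gamma,\eta}_w}$. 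Chaining the two steps yields \eqref{2.25} with $C(R)$ strictly increasing in $R$.

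The main obstacle I anticipate is bookkeeping rather than anything conceptual: one must check that the cut-off map $\chi_R$ really is Lipschitz with respect to the \emph{entire} $\mathcal{D}^{2\gamma,\eta}_w$-norm — including the mildly-controlled-rough-path seminorm $\|\cdot\|_{w,2\gamma,\alpha}$, i.e. the bounds on both the Gubinelli derivative $\chi_R'(y)=y'\varphi(\|y,y'\|/R)$ and the remainder $R^{\chi_R(y)}$. The remainder expansion is the delicate part, since $R^{\chi_R(y)}_{t,s}$ picks up cross terms of the form $v_s$ times $\hat\delta$-increments of $\varphi(\|y,y'\|/R)$ (a scalar, hence $\hat\delta=\delta$ on it) as well as $R^y_{t,s}\varphi(\|y,y'\|/R)$, and one must verify each such term is $2\gamma$-Hölder in $(t,s)$ with constant $\lesssim C(R)\|y-v,(y-v)'\|_{\mathcal{D}^{2\gamma,\eta}_w}$; this uses $\eqref{2.5}$ to bound $\|y\|_{\gamma,-2\gamma}$ by the $\mathcal{D}^{2\gamma,\eta}_w$-norm (times $1+|w|_\gamma$), and the fact that $T=1$ is fixed so no small-time factors are needed. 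Since the structure is entirely parallel to Lemma \ref{Lemma2.3} and Lemma \ref{Lemma2.7}, I would present it compactly, stating that the computation ``is analogous to the proof of Lemma \ref{Lemma2.3} combined with the Lipschitz estimates for $\chi_R$ established in Lemma \ref{Lemma2.7}'' and only spelling out the genuinely new cross terms in the remainder.
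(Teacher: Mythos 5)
Your two-step plan (apply Lemma \ref{Lemma2.3} to $\tilde y=\chi_R(y)$, $\tilde v=\chi_R(v)$, then use Lipschitz continuity of the cut-off map) produces \emph{a} bound of the form \eqref{2.25}, but it misses the quantitative point that this lemma exists to deliver: a Lipschitz constant that becomes small with $R$ (the paper's proof yields $C(R)\sim C_{g,|w|_\gamma,\chi}(R+R^2)$). The constant $C_{M,g,T}$ in Lemma \ref{Lemma2.3} does not tend to zero as $M\to 0$ — inspect its proof: the leading terms are of the form $C_g\|y-v\|_{\gamma,-2\gamma}$ and $C_g\|y-v\|_{\infty,0}$ with $C_g$ the (nonzero) Lipschitz/derivative bound of $g$, with no factor of $M$ at all, because Lemma \ref{Lemma2.3} never uses $g(0)=Dg(0)=D^2g(0)=0$. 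Likewise the Lipschitz constant of $\chi_R$ is $C_{\chi,|w|_\gamma}$, not small in $R$. So chaining the two steps gives a constant bounded below uniformly in $R$, and then the later machinery breaks: equation \eqref{2.29} defining $\tilde R(w)$ need not have a solution, and the contraction argument in Theorem \ref{theorem4} (``choosing $R$ small enough'') as well as the smallness $K\le 1/2$ needed for the gap condition \eqref{4.11} cannot be achieved. You flag the ``cubic-type control near the origin'' as one of your three facts, but your actual estimates never invoke it.

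The paper therefore does not black-box Lemma \ref{Lemma2.3}; it redoes the composition estimates for the truncated paths, using the refined inequality \eqref{2.26} (which carries the extra factor $\max\{\|x_1\|,\|x_2\|\}$ coming from the vanishing of $g$ and its derivatives at $0$) together with $\|Dg(\chi_R(\cdot))\|\lesssim\|\chi_R(\cdot)\|$, and the a priori bounds $\|\chi_R(y)\|_{\infty,0}\le R$, $\|\chi_R(y)\|_{\gamma,-2\gamma}\le C_{|w|_\gamma}R$, so that every term in the expansion of $R^{g(\chi_R(y))}-R^{g(\chi_R(v))}$ (the same six-term decomposition as in \eqref{2.13}) picks up a factor $R$ or $R^2$ multiplying $\|y-v,(y-v)'\|_{\mathcal{D}^{2\gamma,\eta}_{w}}$. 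To repair your argument you would have to carry out exactly this refinement rather than cite Lemma \ref{Lemma2.3}; the parts of your proposal concerning the Lipschitz estimates for $\chi_R$, $\chi_R'$ and the identity $R^{\chi_R(y)}_{t,s}=R^{y}_{t,s}\varphi(\|y,y'\|_{\mathcal{D}^{2\gamma,\eta}_{w}}/R)$ (note $\varphi(\cdot)$ is a time-independent scalar, so it has no increments of its own) are correct and coincide with the paper's preliminary estimates.
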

\begin{proof}
 In the beginning, we give the inequality below which will be used in the following process of proof. Let $g\in \mathcal{C}^{3}_{-2\gamma,0}(\mathcal{H}, \mathcal{H}^{d})$, $x_{1}$, $x_{2}$, $x_{3}$, $x_{4}\in\mathcal{H}_{\theta},\theta\geq-2\gamma$, the estimate as below holds true
\begin{small}
\begin{equation}\label{2.26}
\begin{aligned}
&\|g(x_{1})-g(x_{2})-g(x_{3})+g(x_{4})\|_{\mathcal{H}^{d}_{\theta}}\\
&\leq C_{g}\max\{\|x_{1}\|_{\mathcal{H}_{\theta}},\|x_{2}\|_{\mathcal{H}_{\theta}}\}\|x_{1}-x_{2}-x_{3}+x_{4}\|_{\mathcal{H}_{\theta}}\\
&\quad+C_{g}(\|x_{1}-x_{3}\|_{\mathcal{H}_{\theta}}+\|x_{2}-x_{4}\|_{\mathcal{H}_{\theta}})\|x_{3}-x_{4}\|_{\mathcal{H}_{\theta}}.
\end{aligned}
\end{equation}
\end{small}
The key point of proof is to estimate terms of $\|g(\chi_{R}(y))-g(\chi_{R}(v))\|_{\gamma,-2\gamma}$, $\|(g(\chi_{R}(y))-g(\chi_{R}(v)))^{'}\|_{\gamma,-2\gamma}$ and $|R^{g(\chi_{R}(y))}-R^{g(\chi_{R}(v))}|_{2\gamma,-2\gamma}$. According to the construction of $\chi_{R}(y)$, we easily have the following estimates
\begin{equation}\nonumber
\begin{aligned}
R^{\chi_{R}(y)}_{t,s}&=\hat{\delta}\chi_{R}(y)_{t,s}-S_{ts}\chi_{R}^{'}(y)_{s}\delta w_{t,s}\\
&=\hat{\delta}y_{t,s}\varphi(\|y,y^{'}\|_{\mathcal{D}^{2\gamma,\eta}_{w}}/R)-S_{ts}y^{'}_{s}\varphi(\|y,y^{'}\|_{\mathcal{D}^{2\gamma,\eta}_{w}}/R)\delta w_{t,s}\\
&=R^{y}_{t,s}\varphi(\|y,y^{'}\|_{\mathcal{D}^{2\gamma,\eta}_{w}}/R),
\end{aligned}
\end{equation}
\begin{small}
\begin{equation}\nonumber
\begin{aligned}
\|\chi_{R}(y)\|_{\gamma,-2\gamma}
&\leq\|y\varphi(\|y,y^{'}\|_{\mathcal{D}^{2\gamma,\eta}_{w}}/R)\|_{\gamma,-2\gamma}\leq\varphi(\|y,y^{'}\|_{\mathcal{D}^{2\gamma,\eta}_{w}}/R)\|y\|_{\gamma,-2\gamma}\\
&\leq(1+|w|_{\gamma})\|y,y^{'}\|_{\mathcal{D}^{2\gamma,\eta}_{w}}\leq C_{|w|_{\gamma}}R,
\end{aligned}
\end{equation}
\end{small}
\begin{small}
\begin{equation}\nonumber
\begin{aligned}
\|\chi_{R}(y)\|_{\infty,0}
&=\|y\varphi(\|y,y^{'}\|_{\mathcal{D}^{2\gamma,\eta}_{w}}/R)\|_{\infty,0}=\varphi(\|y,y^{'}\|_{\mathcal{D}^{2\gamma,\eta}_{w}}/R)\|y\|_{\infty,0}\\
&\leq\|y\|_{\infty,0}\leq\|y,y^{'}\|_{\mathcal{D}^{2\gamma,\eta}_{w}}\leq R,
\end{aligned}
\end{equation}
\end{small}
\begin{small}
\begin{equation}\nonumber
\begin{aligned}
|\chi_{R}(y)-\chi_{R}(v)|_{\gamma,-2\gamma}&\leq\varphi(\|y,y^{'}\|_{\mathcal{D}^{2\gamma,\eta}_{w}}/R)|y-v|_{\gamma,-2\gamma}\\
&\quad+\|v\|_{\gamma,-2\gamma}\|D\varphi\|_{\infty}(\|y,y^{'}\|_{\mathcal{D}^{2\gamma,\eta}_{w}}/R-\|v,v^{'}\|_{\mathcal{D}^{2\gamma,\eta}_{w}}/R)\\
&\leq C_{|w|_{\gamma},\chi}\|y-v,(y-v)^{'}\|_{\mathcal{D}^{2\gamma,\eta}_{w}},
\end{aligned}
\end{equation}
\end{small}
\begin{small}
\begin{equation}\nonumber
\begin{aligned}
|\chi_{R}^{'}(y)-\chi_{R}^{'}(v)|_{\gamma,-2\gamma}&=|y^{'}\varphi(\|y,y^{'}\|_{\mathcal{D}^{2\gamma,\eta}_{w}}/R)-v^{'}\varphi(\|v,v^{'}\|_{\mathcal{D}^{2\gamma,\eta}_{w}}/R)|_{\gamma,-2\gamma}\\
&\leq\varphi(\|y,y^{'}\|_{\mathcal{D}^{2\gamma,\eta}_{w}}/R)|y^{'}-v^{'}|_{\gamma,-2\gamma}\\
&\quad+\|v^{'}\|_{\gamma,-2\gamma}\|D\varphi\|_{\infty}(\|y,y^{'}\|_{\mathcal{D}^{2\gamma,\eta}_{w}}/R-\|v,v^{'}\|_{\mathcal{D}^{2\gamma,\eta}_{w}}/R)\\
&\leq C_{|w|_{\gamma},\chi}\|y-v,(y-v)^{'}\|_{\mathcal{D}^{2\gamma,\eta}_{w}},
\end{aligned}
\end{equation}
\end{small}
\begin{small}
\begin{equation}\nonumber
\begin{aligned}
|R^{\chi_{R}(y)}-R^{\chi_{R}(v)}|_{2\gamma,-2\gamma}&=|R^{y}\varphi(\|y,y^{'}\|_{\mathcal{D}^{2\gamma,\eta}_{w}}/R)-R^{v}\varphi(\|v,v^{'}\|_{\mathcal{D}^{2\gamma,\eta}_{w}}/R)|_{2\gamma,-2\gamma}\\
&\leq\varphi(\|y,y^{'}\|_{\mathcal{D}^{2\gamma,\eta}_{w}}/R)|R^{y}-R^{v}|_{2\gamma,-2\gamma}\\
&\quad+|R^{v}|_{2\gamma,-2\gamma}\|D\varphi\|_{\infty}(\|y,y^{'}\|_{\mathcal{D}^{2\gamma,\eta}_{w}}/R-\|v,v^{'}\|_{\mathcal{D}^{2\gamma,\eta}_{w}}/R)\\
&\leq C_{|w|_{\gamma},\chi}\|y-v,(y-v)^{'}\|_{\mathcal{D}^{2\gamma,\eta}_{w}}.
\end{aligned}
\end{equation}
\end{small}

Firstly, applying above estimates and \eqref{2.24}, we have
\begin{small}
\begin{equation}\nonumber
\begin{aligned}
\|g(\chi_{R}(y))-g(\chi_{R}(v))\|_{\gamma,-2\gamma}
\leq |g(\chi_{R}(y))-g(\chi_{R}(v))|_{\gamma,-2\gamma}+\|g(\chi_{R}(y))-g(\chi_{R}(v))\|_{\infty,0},
\end{aligned}
\end{equation}
\begin{equation}\nonumber
\begin{aligned}
&|g(\chi_{R}(y))-g(\chi_{R}(v))|_{\gamma,-2\gamma}\\
&\leq C_{g}\max\{\|\chi_{R}(y)\|_{\infty,-2\gamma},\|\chi_{R}(v)\|_{\infty,-2\gamma}\}|\chi_{R}(y)-\chi_{R}(v)|_{\gamma,-2\gamma}\\
&\quad+C_{g}\left(|\chi_{R}(y)|_{\gamma,-2\gamma}+|\chi_{R}(v)|_{\gamma,-2\gamma}\right)\|\chi_{R}(y)-\chi_{R}(v)\|_{\infty,-2\gamma}\\
&\leq C_{g,|w|_{\gamma},\chi}R(\|\chi_{R}(y)-\chi_{R}(v)\|_{\gamma,-2\gamma}+\|\chi_{R}(y)-\chi_{R}(v)\|_{\infty,0})\\
&\quad+C_{g}\left(\|\chi_{R}(y)\|_{\gamma,-2\gamma}+\|\chi_{R}(y)\|_{\infty,0}+\|\chi_{R}(v)\|_{\gamma,-2\gamma}+\|\chi_{R}(v)\|_{\infty,0}\right)\|\chi_{R}(y)-\chi_{R}(v)\|_{\infty,-2\gamma}\\
&\leq C_{g,|w|_{\gamma},\chi}R\|y-v,(y-v)^{'}\|_{\mathcal{D}^{2\gamma,\eta}_{w}}
\end{aligned}
\end{equation}
\end{small}
and
\begin{small}
\begin{equation}\nonumber
\begin{aligned}
\|g(\chi_{R}(y))-g(\chi_{R}(v))\|_{\infty,0}&\leq C_{g}\max\{\|\chi_{R}(y)\|_{\infty,0},\|\chi_{R}(v)\|_{\infty,0}\}\|\chi_{R}(y)-\chi_{R}(v)\|_{\infty,0}\\
&\leq C_{g,\chi}R\|\chi_{R}(y)-\chi_{R}(v)\|_{\infty,0},
\end{aligned}
\end{equation}
\end{small}
hence, base on above estimates we obtain
\begin{small}
\begin{equation}\nonumber
\begin{aligned}
\|g(\chi_{R}(y))-g(\chi_{R}(v))\|_{\gamma,-2\gamma}&\leq C_{g,|w|_{\gamma},\chi}R\|y-v,(y-v)^{'}\|_{\mathcal{D}^{2\gamma,\eta}_{w}}.
\end{aligned}
\end{equation}
\end{small}

Secondly, since
\begin{small}
\begin{equation}\nonumber
\begin{aligned}
\|Dg(\chi_{R}(y))\chi^{'}_{R}(y)-Dg(\chi_{R}(v))\chi^{'}_{R}(v)\|_{\gamma,-2\gamma}&\leq |Dg(\chi_{R}(y))\chi^{'}_{R}(y)-Dg(\chi_{R}(v))\chi^{'}_{R}(v)|_{\gamma,-2\gamma}\\
&\quad+\|Dg(\chi_{R}(y))\chi^{'}_{R}(y)-Dg(\chi_{R}(v))\chi^{'}_{R}(v)\|_{\infty,0},
\end{aligned}
\end{equation}
\end{small}
\begin{small}
\begin{equation}\nonumber
\begin{aligned}
&|Dg(\chi_{R}(y))\chi^{'}_{R}(y)-Dg(\chi_{R}(v))\chi^{'}_{R}(v)|_{\gamma,-2\gamma}\\
&\leq|Dg(\chi_{R}(y))(\chi^{'}_{R}(y)-\chi^{'}_{R}(v))|_{\gamma,-2\gamma}+|(Dg(\chi_{R}(y))-(Dg(\chi_{R}(v)))\chi^{'}_{R}(v)|_{\gamma,-2\gamma}\\
&\leq\|Dg(\chi_{R}(y))\|_{\infty,\mathcal{L}(\mathcal{H}_{-2\gamma}\otimes\mathbb{R}^{d},\mathcal{H}_{-2\gamma})}|(\chi^{'}_{R}(y)-\chi^{'}_{R}(v))|_{\gamma,-2\gamma}\\
&\quad+\|Dg(\chi_{R}(y))\|_{\gamma,\mathcal{L}(\mathcal{H}_{-2\gamma}\otimes\mathbb{R}^{d},\mathcal{H}_{-2\gamma})}|(\chi^{'}_{R}(y)-\chi^{'}_{R}(v))|_{\infty,-2\gamma}\\
&\quad+|Dg(\chi_{R}(y))-Dg(\chi_{R}(v))|_{-2\gamma,\mathcal{L}(\mathcal{H}_{-2\gamma}\otimes\mathbb{R}^{d},\mathcal{H}_{-2\gamma})}\|\chi^{'}_{R}(v)\|_{\infty,-2\gamma}\\
&\quad+|Dg(\chi_{R}(y))-Dg(\chi_{R}(v))|_{\infty,\mathcal{L}(\mathcal{H}_{-2\gamma}\otimes\mathbb{R}^{d},\mathcal{H}_{-2\gamma})}\|\chi^{'}_{R}(v)\|_{\gamma,-2\gamma}\\
&\leq C_{g}(\|\chi_{R}(y)\|_{\infty,-2\gamma}|\chi^{'}_{R}(y)-\chi^{'}_{R}(v)|_{\gamma,-2\gamma}+|\chi_{R}(y)|_{\gamma,-2\gamma}\|\chi^{'}_{R}(y)-\chi^{'}_{R}(v)\|_{\infty,-2\gamma})\\
&\quad+ C_{g}(\|\chi^{'}_{R}(v)\|_{\infty,-2\gamma}|\chi_{R}(y)-\chi_{R}(v)|_{\gamma,-2\gamma}+|\chi^{'}_{R}(v)|_{\gamma,-2\gamma}\|\chi_{R}(y)-\chi_{R}(v)\|_{\infty,-2\gamma})\\
&\leq C_{g,|w|_{\gamma},\chi}R(\|\chi^{'}_{R}(y)-\chi^{'}_{R}(v)\|_{\gamma,-2\gamma}+\|\chi^{'}_{R}(y)-\chi^{'}_{R}(v)\|_{\infty,0}+\|\chi^{'}_{R}(y)-\chi^{'}_{R}(v)\|_{\infty,-2\gamma})\\
&\quad+C_{g,\chi}R(\|\chi_{R}(y)-\chi_{R}(v)\|_{\gamma,-2\gamma}+\|\chi_{R}(y)-\chi_{R}(v)\|_{\infty,0}+\|\chi_{R}(y)-\chi_{R}(v)\|_{\infty,-2\gamma})
\end{aligned}
\end{equation}
\end{small}
and
\begin{small}
\begin{equation}\nonumber
\begin{aligned}
\|Dg(\chi_{R}(y))\chi^{'}_{R}(y)-Dg(\chi_{R}(v))\chi^{'}_{R}(v)\|_{\infty,0}
&\leq\|Dg(\chi_{R}(y))(\chi^{'}_{R}(y)-\chi^{'}_{R}(v))\|_{\infty,0}\\
&\quad+\|(Dg(\chi_{R}(y))-Dg(\chi_{R}(v)))\chi^{'}_{R}(v)\|_{\infty,0}\\
&\leq C_{g}(\|\chi_{R}(y)\|_{\infty,0}\|\chi^{'}_{R}(y)-\chi^{'}_{R}(v)\|_{\infty,0}\\
&\quad+\|\chi_{R}(y)-\chi_{R}(v))\|_{\infty,0}\|\chi^{'}_{R}(v)\|_{\infty,0})\\
&\leq C_{g,|w|_{\gamma},\chi}R(\|\chi^{'}_{R}(y)-\chi^{'}_{R}(v)\|_{\infty,0}\\
&\quad+\|\chi_{R}(y)-\chi_{R}(v))\|_{\infty,0}).
\end{aligned}
\end{equation}
\end{small}
Using above estimates we easily obtain
\begin{small}
\begin{equation}\nonumber
\begin{aligned}
\|Dg(\chi_{R}(y))\chi^{'}_{R}(y)-Dg(\chi_{R}(v))\chi^{'}_{R}(v)\|_{\gamma,-2\gamma}
\leq C_{g,|w|_{\gamma},\chi}R\|y-v,(y-v)^{'}\|_{\mathcal{D}^{2\gamma,\eta}_{w}}.
\end{aligned}
\end{equation}
\end{small}
For the remainder term of $g(\chi_{R}(y))-g(\chi_{R}(v))$, using \eqref{2.13} we have
\begin{equation}\nonumber
\begin{aligned}
&R^{g(\chi_{R}(y))}_{t,s}-R^{g(\chi_{R}(v))}_{t,s}\\
&=g(\chi_{R}(y_{t}))-g(\chi_{R}(y_{s}))-Dg(\chi_{R}(y_{s}))\delta \chi_{R}(y)_{t,s}-(g(\chi_{R}(v_{t}))-g(\chi_{R}(v_{s}))\\
&\quad-Dg(\chi_{R}(v_{s}))\delta \chi_{R}(v)_{t,s})+Dg(\chi_{R}(y))_{s}R^{\chi_{R}(y)}_{t,s}-Dg(\chi_{R}(v))_{s}R^{\chi_{R}(v)}_{t,s}\\
&\quad-(S_{ts}-I)(g(\chi_{R}(y_{s}))-g(\chi_{R}(v_{s})))\\
&\quad+Dg(\chi_{R}(y_{s}))(S_{ts}-I)\chi_{R}^{'}(y)_{s}\delta w_{t,s}-(Dg(\chi_{R}(v_{s}))(S_{ts}-I)\chi_{R}^{'}(v)_{s}\delta w_{t,s})\\
&\quad-(S_{ts}-I)(Dg(\chi_{R}(y_{s}))\chi_{R}^{'}(y)_{s}\delta w_{t,s}-Dg(\chi_{R}(v)_{s})\chi_{R}^{'}(v)_{s}\delta w_{t,s})\\
&\quad+Dg(\chi_{R}(y_{s}))(S_{ts}-I)\chi_{R}(y)_{s}-Dg(\chi_{R}(v_{s}))(S_{ts}-I)\chi_{R}(v)_{s}\\
&=\romannumeral1+\romannumeral2+\romannumeral3
+\romannumeral4+\romannumeral5+\romannumeral6.
\end{aligned}
\end{equation}
For $\romannumeral1$, using (44) of \cite{MR4284415} twice, we have
\begin{small}
\begin{equation}
\begin{aligned}
&\|\romannumeral1\|_{\mathcal{H}^{d}_{-2\gamma}}\nonumber\\
&=\|\int^{1}_{0}\int^{1}_{0}C_{g}[\tau r^{2}(\chi_{R}(y_{t})-\chi_{R}(v_{t})+(r-\tau r^{2})(\chi_{R}(y_{s})-\chi_{R}(v_{s}))]d\tau dr(\delta\chi_{R}(y)_{t,s})^{2}\|_{\mathcal{H}_{-2\gamma}}\nonumber\\
\end{aligned}
\end{equation}
\end{small}
\begin{small}
\begin{equation}
\begin{aligned}
&\quad+\|\int^{1}_{0}\int^{1}_{0}C_{g}[\tau r^{2}\chi_{R}(v_{t})+(r-\tau r^{2})\chi_{R}(v_{s})]d\tau dr\left((\delta \chi_{R}(y)_{t,s})^{2}-(\delta \chi_{R}(v)_{t,s})^{2}\right)\|_{\mathcal{H}_{-2\gamma}}\nonumber\\
&\leq C_{g}\|\chi_{R}(y)-\chi_{R}(v)\|_{\infty,-2\gamma}\|(\hat{\delta }\chi_{R}(y)_{t,s}+(S_{ts}-I)\chi_{R}(y_{s}))^{2}\|_{\mathcal{H}_{-2\gamma}}\nonumber\\
&\quad+C_{g}\|\chi_{R}(v)\|_{\infty,-2\gamma}\|\hat{\delta }\chi_{R}(y)_{t,s}+(S_{ts}-I)\chi_{R}(y_{s})+\hat{\delta}\chi_{R}(v)_{t,s}+(S_{ts}-I)\chi_{R}(v_{s})\|_{\mathcal{H}_{-2\gamma}}\nonumber\\
&\quad\cdot\|\hat{\delta}(\chi_{R}(y)-\chi_{R}(v))_{t,s}+(S_{ts}-I)(\chi_{R}(y)-\chi_{R}(v))_{s}\|_{\mathcal{H}_{-2\gamma}}\nonumber\\
&\leq C_{g}\|\chi_{R}(y)-\chi_{R}(v)\|_{\infty,-2\gamma}(\|\chi_{R}(y)\|_{\gamma,-2\gamma}|t-s|^{\gamma}+\|\chi_{R}(y)\|_{\infty,0}|t-s|^{2\gamma})^{2}\nonumber\\
&\quad+C_{g}\|\chi_{R}(v)\|_{\infty,-2\gamma}(\|\chi_{R}(y)\|_{\gamma,-2\gamma}|t-s|^{\gamma}+\|\chi_{R}(y)\|_{\infty,0}|t-s|^{2\gamma}+\|\chi_{R}(v)\|_{\gamma,-2\gamma}|t-s|^{\gamma}\\
&\quad+\|\chi_{R}(v)\|_{\infty,0}|t-s|^{2\gamma})\cdot(\|\chi_{R}(y)-\chi_{R}(v)\|_{\gamma,-2\gamma}|t-s|^{\gamma}+\|\chi_{R}(y)-\chi_{R}(v)\|_{\infty,0}|t-s|^{2\gamma}),
\end{aligned}
\end{equation}
\end{small}
hence,
\begin{small}
\begin{equation}\nonumber
\begin{aligned}
\|\romannumeral1\|_{2\gamma,-2\gamma}\leq C_{g,|w|_{\gamma},\chi}R^{2}\|y-v,(y-v)^{'}\|_{\mathcal{D}^{2\gamma,\eta}_{w}}.
\end{aligned}
\end{equation}
\end{small}
For $\romannumeral2$,
\begin{small}
\begin{equation}\nonumber
\begin{aligned}
\|\romannumeral2\|_{\mathcal{H}^{d}_{-2\gamma}}
&\leq\|Dg(\chi_{R}(y_{s}))R^{\chi_{R}(y)}_{t,s}-Dg(\chi_{R}(y_{s}))R^{\chi_{R}(v)}_{t,s}+Dg(\chi_{R}(y_{s}))R^{\chi_{R}(v)}_{t,s}\\
&\quad-Dg(\chi_{R}(v_{s}))R^{\chi_{R}(v)}_{t,s}\|_{\mathcal{H}^{d}_{-2\gamma}}\nonumber\\
&\leq\|Dg(\chi_{R}(y_{s}))(R^{\chi_{R}(y)}_{t,s}-R^{\chi_{R}(v)}_{t,s})\|_{\mathcal{H}^{d}_{-2\gamma}}+\|(Dg(\chi_{R}(y_{s}))-Dg(\chi_{R}(v_{s}))R^{\chi_{R}(v)}_{t,s}\|_{\mathcal{H}^{d}_{-2\gamma}}\nonumber\\
&\leq C_{g}\|\chi_{R}(y)\|_{\infty,-2\gamma}|R^{\chi_{R}(y)}-R^{\chi_{R}(v)}|_{2\gamma,-2\gamma}|t-s|^{2\gamma}\\
&\quad+C_{g}\|\chi_{R}(y)-\chi_{R}(v)\|_{\infty,-2\gamma}|R^{\chi_{R}(v)}|_{2\gamma,-2\gamma}|t-s|^{2\gamma},
\end{aligned}
\end{equation}
\end{small}
hence,
\begin{small}
\begin{equation}\nonumber
\begin{aligned}
\|\romannumeral2\|_{2\gamma,-2\gamma}\leq C_{g,|w|_{\gamma},\chi}R\|y-v,(y-v)^{'}\|_{\mathcal{D}^{2\gamma,\eta}_{w}}.
\end{aligned}
\end{equation}
\end{small}
For $\romannumeral3$, we easily have
\begin{small}
\begin{equation}\nonumber
\begin{aligned}
\|\romannumeral3\|_{\mathcal{H}^{d}_{-2\gamma}}&\leq \|Dg(\chi_{R}(y_{s}))(\chi_{R}(y_{s})-\chi_{R}(v_{s}))\|_{\mathcal{H}^{d}}|t-s|^{2\gamma}\\
&\leq C_{g}\|\chi_{R}(y)\|_{\infty,0}\|\chi_{R}(y)-\chi_{R}(v)\|_{\infty,0}|t-s|^{2\gamma}\\
&\leq C_{g,|w|_{\gamma},\chi}R\|\chi_{R}(y)-\chi_{R}(v)\|_{\infty,0}|t-s|^{2\gamma}
\end{aligned}
\end{equation}
\end{small}
hence,
\begin{small}
\begin{equation}\nonumber
\begin{aligned}
\|\romannumeral3\|_{2\gamma,-2\gamma}\leq C_{g,|w|_{\gamma},\chi}R\|y-v,(y-v)^{'}\|_{\mathcal{D}^{2\gamma,\eta}_{w}}.
\end{aligned}
\end{equation}
\end{small}
For $\romannumeral4$, we have
\begin{small}
\begin{equation}
\begin{aligned}
\|\romannumeral4\|_{\mathcal{H}^{d}_{-2\gamma}}
&\leq\|(Dg(\chi_{R}(y_{s}))-Dg(\chi_{R}(v_{s})))(S_{ts}-I)\chi_{R}^{'}(y)_{s}\delta w_{t,s}\|_{\mathcal{H}^{d}_{-2\gamma}}\nonumber\\
&\quad+\|Dg(\chi_{R}(v_{s}))(S_{ts}-I)(\chi_{R}^{'}(y)_{s}-\chi_{R}^{'}(v)_{s})\delta w_{t,s}\|_{\mathcal{H}^{d}_{-2\gamma}}\nonumber\\
&\leq C_{g}\|\chi_{R}(y)-\chi_{R}(v)\|_{\infty,-2\gamma}\|\chi_{R}^{'}(y)\|_{\infty,0}|w|_{\gamma}|t-s|^{3\gamma}\\
&\quad+C_{g}\|\chi_{R}(y)\|_{\infty,-2\gamma}\|\chi_{R}^{'}(y)-\chi_{R}^{'}(v)\|_{\infty,0}|w|_{\gamma}|t-s|^{3\gamma}\\
&\leq C_{g,|w|_{\gamma},\chi}R\|\chi_{R}(y)-\chi_{R}(v)\|_{\infty,-2\gamma}|w|_{\gamma}|t-s|^{3\gamma}\\
&\quad+C_{g,|w|_{\gamma},\chi}R\|\chi_{R}^{'}(y)-\chi_{R}^{'}(v)\|_{\infty,0}|w|_{\gamma}|t-s|^{3\gamma},
\end{aligned}
\end{equation}
\end{small}
hence,
\begin{small}
\begin{equation}\nonumber
\begin{aligned}
\|\romannumeral4\|_{2\gamma,-2\gamma}\leq C_{g,|w|_{\gamma},\chi}R\|y-v,(y-v)^{'}\|_{\mathcal{D}^{2\gamma,\eta}_{w}}.
\end{aligned}
\end{equation}
\end{small}
For $\romannumeral5$, we have
\begin{small}
\begin{equation}
\begin{aligned}
\|\romannumeral5\|_{\mathcal{H}^{d}_{-2\gamma}}
&\leq\|(S_{ts}-I)(Dg(\chi_{R}(y_{s}))-Dg(\chi_{R}(v_{s})))\chi_{R}^{'}(y)_{s}\delta w_{t,s}\|_{\mathcal{H}^{d}_{-2\gamma}}\nonumber\\
&\quad+\|(S_{ts}-I)Dg(\chi_{R}(v_{s}))(\chi_{R}^{'}(y)_{s}-\chi_{R}^{'}(v)_{s})\delta w_{t,s}\|_{\mathcal{H}^{d}_{-2\gamma}}\nonumber\\
&\leq C_{g}\|\chi_{R}(y)-\chi_{R}(v)\|_{\infty,0}\|\chi_{R}^{'}(y)\|_{\infty,0}|w|_{\gamma}|t-s|^{3\gamma}\\
&\quad+C_{g}\|\chi_{R}(y)\|_{\infty,0}\|\chi_{R}^{'}(y)-\chi_{R}^{'}(v)\|_{\infty,0}|w|_{\gamma}|t-s|^{3\gamma}\\
&\leq C_{g,\chi}R\|\chi_{R}(y)-\chi_{R}(v)\|_{\infty,0}|w|_{\gamma}|t-s|^{3\gamma}\\
&\quad+C_{g,\chi}R\|\chi_{R}^{'}(y)-\chi_{R}^{'}(v)\|_{\infty,0}|w|_{\gamma}|t-s|^{3\gamma},
\end{aligned}
\end{equation}
\end{small}
hence,
\begin{equation}\nonumber
\begin{aligned}
\|\romannumeral5\|_{2\gamma,-2\gamma}\leq C_{g,|w|_{\gamma},\chi}R\|y-v,(y-v)^{'}\|_{\mathcal{D}^{2\gamma,\eta}_{w}}.
\end{aligned}
\end{equation}
For $\romannumeral6$, we have
\begin{small}
\begin{equation}
\begin{aligned}
\|\romannumeral6\|_{\mathcal{H}^{d}_{-2\gamma}}
&\leq\|(Dg(\chi_{R}(y_{s}))-Dg(\chi_{R}(v_{s})))(S_{ts}-I)\chi_{R}(y)_{s}\|_{\mathcal{H}^{d}_{-2\gamma}}\nonumber\\
&\quad+\|Dg(\chi_{R}(v_{s}))(S_{ts}-I)(\chi_{R}(y)_{s}-\chi_{R}(v)_{s})\|_{\mathcal{H}^{d}_{-2\gamma}}\nonumber\\
\end{aligned}
\end{equation}
\end{small}
\begin{small}
\begin{equation}
\begin{aligned}\nonumber
&\leq C_{g}\|\chi_{R}(y)-\chi_{R}(v)\|_{\infty,-2\gamma}\|\chi_{R}(y)\|_{\infty,0}|t-s|^{2\gamma}\\
&\quad+C_{g}\|\chi_{R}(y)\|_{\infty,-2\gamma}\|\chi_{R}(y)-\chi_{R}(v)\|_{\infty,0}|t-s|^{2\gamma}\\
&\leq C_{g,\chi}R\|\chi_{R}(y)-\chi_{R}(v)\|_{\infty,-2\gamma}|t-s|^{2\gamma}\\
&\quad+C_{g,|w|_{\gamma},\chi}R\|\chi_{R}^{'}(y)-\chi_{R}^{'}(v)\|_{\infty,0}|t-s|^{2\gamma},
\end{aligned}
\end{equation}
\end{small}
hence,
\begin{equation}\nonumber
\begin{aligned}
\|\romannumeral6\|_{2\gamma,-2\gamma}\leq C_{g,|w|_{\gamma},\chi}R\|y-v,(y-v)^{'}\|_{\mathcal{D}^{2\gamma,\eta}_{w}}.
\end{aligned}
\end{equation}
Consequently, according to above estimates, we obtain
\begin{small}
\begin{equation}\nonumber
\begin{aligned}
|R^{g(\chi_{R}(y))}-R^{g(\chi_{R}(v))}|_{2\gamma,-2\gamma}&\leq C_{g,|w|_{\gamma},\chi}(R+R^{2})\|y-v,(y-v)^{'}\|_{\mathcal{D}^{2\gamma,\eta}_{w}}\\
&\leq C_{g,|w|_{\gamma},\chi}(R)\|y-v,(y-v)^{'}\|_{\mathcal{D}^{2\gamma,\eta}_{w}}.
\end{aligned}
\end{equation}
\end{small}

Finally, one can easily obtain \eqref{2.25}.
\end{proof}
According to above Lemmas, we will prove that the modified equation \eqref{2.2} obtained by replacing $f$ and $g$ with $f_{R}$ and $g_{R}$ has
a unique solution. To this aim, for $(y,y^{'})\in\mathcal{D}^{2\gamma,\eta}_{w}([0,1];\mathcal{H})$ and $t\in[0,1]$, we introduce
\begin{equation}\label{2.27}
\begin{aligned}
\mathcal{T}_{R}(w,y,y^{'})[t]:=\int^{t}_{0}S_{t u}f_{R}(y_{u})du+\int^{t}_{0}S_{tu}g_{R}(y_{u})d\mathbf{w}_{u},
\end{aligned}
\end{equation}
with mildly Gubinelli derivative $\mathcal{T}_{R}(w,y,y^{'})^{'}=g_{R}(y)$. Base on the estimates derived in the previous lemmas, we have the following result.
\begin{theorem}\label{theorem4}
The mapping
$\mathcal{T}_{R}:\mathcal{D}^{2\gamma,\eta}_{w}([0,1];\mathcal{H})\rightarrow\mathcal{D}^{2\gamma,\eta}_{w}([0,1];\mathcal{H})$,
$$(\mathcal{T}_{R}(w,y,y^{'})[\cdot],\mathcal{T}_{R}(w,y,y^{'})^{'}[\cdot]):=\left(\int^{\cdot}_{0}S_{\cdot u}f_{R}(y_{u})du+\int^{\cdot}_{0}S_{\cdot u}g_{R}(y_{u})d\mathbf{w}_{u},g_{R}(y_{\cdot})\right)$$
has a fixed-point.
\end{theorem}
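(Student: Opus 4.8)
The plan is to realise $(y,g_R(y))$ as the unique fixed point of $\mathcal{T}_R$ via the Banach fixed-point theorem on a small closed ball of $\mathcal{D}^{2\gamma,\eta}_w([0,1];\mathcal{H})$. The first step is to check that $\mathcal{T}_R$ is a well-defined self-map. For the drift term $\int_0^\cdot S_{\cdot u}f_R(y_u)\,du$ this is Lemma \ref{Lemma2.5} applied with $f_R=f\circ\chi_R$ in place of $f$, which is legitimate because $f_R$ is globally Lipschitz, $f_R(0)=0$, and, since $\chi_R(y)=y\,\varphi(\|y,y'\|_{\mathcal{D}^{2\gamma,\eta}_w}/R)$ has a time-independent scalar factor in $[0,1]$, one has $\|\chi_R(y),\chi_R'(y)\|_{\mathcal{D}^{2\gamma,\eta}_w}\leq R$ uniformly in $(y,y')$, hence $\|f_R(y)\|_{\infty,-2\gamma}+\|f_R(y)\|_{\infty,0}$ is bounded uniformly in $(y,y')$. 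For the rough integral $\int_0^\cdot S_{\cdot u}g_R(y_u)\,d\mathbf{w}_u$ one first uses Lemma \ref{Lemma2.1} for the bounded mildly controlled rough path $\chi_R(y)$ to obtain $(g_R(y),(g_R(y))')\in\mathscr{D}^{2\gamma,2\gamma,0}_{S,w}([0,1];\mathcal{H}^d)$ with a norm bounded by a constant $C(R,\varrho(\mathbf{w},0))$ uniformly in $(y,y')$, and then Lemma \ref{Lemma2.2} (with $T=1$) to conclude that the integral lies in $\mathcal{D}^{2\gamma,\eta}_w([0,1];\mathcal{H})$ with norm $\lesssim(1+|w|_\gamma+|w^2|_{2\gamma})\,C(R,\varrho(\mathbf{w},0))$. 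Thus $\mathcal{T}_R$ maps the whole space into a fixed ball; using in addition the vanishing conditions $f(0)=Df(0)=0$ and $g(0)=Dg(0)=D^2g(0)=0$ (which force the estimates above to gain a positive power of $R$), this ball can be arranged to be $\overline{B}(0,r_0)$ with $r_0\leq R/2$ once $R$ is chosen small relative to $\varrho(\mathbf{w},0)$; on $\overline{B}(0,r_0)$ the cut-off is inactive, so $\mathcal{T}_R$ there coincides with the genuine nonlinear solution operator.

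The second step is the contraction estimate. Combining Lemma \ref{Lemma2.7} (Lipschitz bound for $f_R$, constant of order $R$), Lemma \ref{Lemma2.8} (Lipschitz bound for $g_R$ in the $\mathscr{D}^{2\gamma,2\gamma,0}_{S,w}$-norm, constant of order $R+R^2$), the continuity of the rough-integral map from Lemma \ref{Lemma2.2}, and the drift estimate \eqref{2.17}, one obtains
\begin{equation}\nonumber
\begin{aligned}
&\big\|\mathcal{T}_R(w,y,y')-\mathcal{T}_R(w,v,v'),\;g_R(y)-g_R(v)\big\|_{\mathcal{D}^{2\gamma,\eta}_w}\\
&\qquad\leq C_{f,g,\chi}(R)\,(1+|w|_\gamma+|w^2|_{2\gamma})(1+|w|_\gamma)^2\;\|y-v,\;y'-v'\|_{\mathcal{D}^{2\gamma,\eta}_w},
\end{aligned}
\end{equation}
with $C_{f,g,\chi}(R)\to0$ as $R\to0$. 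Choosing $R$ (equivalently $r_0$) small enough that this constant is $<1$, the restriction $\mathcal{T}_R\colon\overline{B}(0,r_0)\to\overline{B}(0,r_0)$ is a contraction of a complete metric space, and Banach's theorem produces the unique fixed point $(y,g_R(y))$, i.e.\ the solution on $[0,1]$ of the truncated equation.

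The point where care is needed is that the time horizon $[0,1]$ is frozen, so none of the $T^{\gamma-\eta}$, $T^{1-2\gamma}$, $T^{\eta\wedge(\gamma-\eta)}$ factors that generated smallness in Theorems \ref{Theorem2.2} and \ref{theorem3} are available here; the contraction must instead be produced by the cut-off alone. This is exactly why the vanishing conditions on $f,g$ at the origin are imposed — through \eqref{2.12} and \eqref{2.26} they make the Lipschitz constants of $f_R,g_R$, and the size of $\mathcal{T}_R$ on small balls, proportional to a positive power of $R$, which can then be tuned to beat the rough-path factor $(1+|w|_\gamma+|w^2|_{2\gamma})(1+|w|_\gamma)^2$; quantifying this dependence on $\varrho(\mathbf{w},0)$ is the technical core, and the resulting smallness is what will be paired with the spectral gap in the Lyapunov–Perron construction of Section 4 (cf.\ \cite{MR4284415,MR11112}). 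Alternatively, for an arbitrary fixed $R>0$ one can combine the short-time contraction with the uniform-in-$(y,y')$ bound on the image of $\mathcal{T}_R$ and concatenate as in Theorem \ref{theorem3} and \cite{MR4097587,MR11111}, the non-locality in time of the cut-off being absorbed into that uniform bound.
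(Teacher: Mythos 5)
Your proposal is correct and follows essentially the paper's own route: the paper likewise combines the drift estimate \eqref{2.17} with Lemma \ref{Lemma2.7}, and Lemma \ref{Lemma2.4} with Lemma \ref{Lemma2.8}, to obtain the Lipschitz bound \eqref{2.28} for $\mathcal{T}_{R}$ on $\mathcal{D}^{2\gamma,\eta}_{w}([0,1];\mathcal{H})$, and then applies Banach's fixed-point theorem, the contraction coming precisely from choosing the cut-off radius $R$ small since no smallness in $T$ is available (the paper contracts on the whole space, using $v\equiv0$ and $f_{R}(0)=g_{R}(0)=0$ for the self-map property, rather than restricting to a small ball as you do). Your closing ``concatenation'' alternative is the only shaky aside, because the cut-off depends on $\|y,y^{'}\|_{\mathcal{D}^{2\gamma,\eta}_{w}}$ over all of $[0,1]$ and hence the truncated equation is not restartable on subintervals, but this remark is not needed for your main argument.
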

\begin{proof}
The proof relies on Banach's fixed-point theorem and is similar to the one of \cite{MR4040992} Theorem 4.1 and \cite{MR4284415} Theorem 2.16. In our evolution setting, let $(y,y^{'})$ and $(v,v^{'})\in\mathcal{D}^{2\gamma,\eta}_{w}([0,1];\mathcal{H})$ with $y_{0}=v_{0}$. Base on \eqref{2.17} and \eqref{2.23}, we have that
\begin{small}
\begin{equation}
\begin{aligned}
&\left\|\int^{\cdot}_{0}S_{\cdot u}(f_{R}(y_{u})-f_{R}(v_{u}))du,0\right\|_{\mathcal{D}^{2\gamma,\eta}_{w}}\nonumber\\
&\leq C\left(\|f_{R}(y)-f_{R}(v)\|_{\infty}+\|f_{R}(y)-f_{R}(v)\|_{\infty,-2\gamma}\right)\leq CR\|y-v,y^{'}-v^{'}\|_{\mathcal{D}^{2\gamma,\eta}_{w}}.
\end{aligned}
\end{equation}
\end{small}
Using Lemma \ref{Lemma2.4} and \eqref{2.25}, we obtain
\begin{small}
\begin{equation}
\begin{aligned}
&\left\|\int^{\cdot}_{0}S_{\cdot u}(g_{R}(y_{u})-g_{R}(v_{u}))d\mathbf{w}_{u},g_{R}(y)-g_{R}(v)\right\|_{\mathcal{D}^{2\gamma,\eta}_{w}}\nonumber\\
&\leq C(1+|w|_{\gamma}+|w^{2}|_{2\gamma})(1+|w|_{\gamma})^{2}\|g_{R}(y)-g_{R}(v),(g_{R}(y)-g_{R}(v))^{'}\|_{\mathscr{D}^{2\gamma,2\gamma,0}_{S,w}}\nonumber\\
&\leq C(1+|w|_{\gamma}+|w^{2}|_{2\gamma})(1+|w|_{\gamma})^{2}C(R)\|y-v,(y-v)^{'}\|_{\mathcal{D}^{2\gamma,\eta}_{w}}.
\end{aligned}
\end{equation}
\end{small}
according to above estimates, we derive
\begin{small}
\begin{equation}\label{2.28}
\begin{aligned}
&\left\|\int^{\cdot}_{0}S_{\cdot u}(f_{R}(y_{u})-f_{R}(v_{u}))du+\int^{\cdot}_{0}S_{\cdot u}(g_{R}(y_{u})-g_{R}(v_{u}))d\mathbf{w}_{u},g_{R}(y)-g_{R}(v)
\right\|_{\mathcal{D}^{2\gamma,\eta}_{w}}\\
&\leq(C_{f,\chi,|w|_{\gamma}}R+C_{g,\chi,|w|_{\gamma}}C(R)(1+|w|_{\gamma}+|w^{2}|_{2\gamma})(1+|w|_{\gamma})^{2})\|y-v,(y-v)^{'}\|_{\mathcal{D}^{2\gamma,\eta}_{w}}.
\end{aligned}
\end{equation}
\end{small}
Letting $v\equiv0$ and using $f_{R}(0)=g_{R}(0)=0$, we see from the previous analyze that $\mathcal{T}_{R}$ maps
$\mathcal{D}^{2\gamma,\eta}_{w}([0,1];\mathcal{H})$ into itself. Furthermore, by choosing $R$ small enough we obtain that $\mathcal{T}_{R}$ is a contraction. Consequently, due to Banach's fixed-point theorem, this mapping has a unique fixed-point, i.e. there
exists a unique $(y,y^{'})\in\mathcal{D}^{2\gamma,\eta}_{w}([0,1];\mathcal{H})$ such that $(\mathcal{T}_{R}(\cdot,y,y^{'}),\mathcal{T}_{R}(\cdot,y,y^{'})^{'})=(y,y^{'})$.
\end{proof}

Return to our consideration, in order to decrease the Lipschitz constants of $f$ and $g$ by $\chi_{R}$, the next aim is to characterize $R$ as required.  As
already seen we have to choose $R$ as small as possible. Since in our discussions, it is always required that $R\leq1$ and $C(R)$ is strictly increasing in $R$.
As is often encountered in the theory of stochastic dynamical systems \cite{MR4284415}, since all the estimates depend on the random input, it is meaningful to employ a cut-off technique for a random variable, i.e. $R = R(w)$.
Such an argument will also be used here as follows.

We fix $K>0$ and regard to \eqref{2.28}, set $\tilde{R}(w)$ be the unique solution of

\begin{equation}\label{2.29}
\begin{aligned}
C_{f,\chi,|w|_{\gamma}}\tilde{R}(w)+C_{g,\chi,|w|_{\gamma}}(\tilde{R}(w))(1+|w|_{\gamma}+|w^{2}|_{2\gamma})(1+|w|_{\gamma})^{2}=K
\end{aligned}
\end{equation}
and set
\begin{equation}\label{2.30}
\begin{aligned}
R(w):=\min\{\tilde{R}(w),1\}.
\end{aligned}
\end{equation}
This means that if $R(w)=1$, we apply the cut-off procedure for $\|y,y^{'}\|_{\mathcal{D}^{2\gamma,\eta}_{w}}\leq1/2$ or else if $R(w) < 1$
for $\|y,y^{'}\|_{\mathcal{D}^{2\gamma,\eta}_{w}}\leq R(w)/2$.

In the following sections, we work with a modified version of \eqref{2.2}, where the
drift and diffusion coefficients $f$ and $g$ are replaced by $f_{R(w)}$ and $g_{R(w)}$. For notational simplicity,
the $w$-dependence of $R$ will be dropped whenever there is no confusion.

According to \eqref{2.29}, we have
\begin{lemma}\label{Lemma2.9}
Let $(y,y^{'})$ and $(v,v^{'})\in\mathcal{D}^{2\gamma,\eta}_{w}([0,1];\mathcal{H})$, we have
\begin{equation}\label{2.31}
\begin{aligned}
&\|\mathcal{T}_{R}(w,y,y^{'})-\mathcal{T}_{R}(w,v,v^{'}),(\mathcal{T}_{R}(w,y,y^{'})-\mathcal{T}_{R}(w,v,v^{'}))^{'}\|_{\mathcal{D}^{2\gamma,\eta}_{w}}\\
&\leq K\|y-v,(y-v)^{'}\|_{\mathcal{D}^{2\gamma,\eta}_{w}}.
\end{aligned}
\end{equation}
\end{lemma}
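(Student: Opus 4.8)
The plan is to obtain \eqref{2.31} essentially for free from the contraction estimate \eqref{2.28} derived in the proof of Theorem \ref{theorem4}, combined with the monotonicity in $R$ of its right-hand side and the way the truncation radius $R(w)$ was calibrated in \eqref{2.29}--\eqref{2.30}. No genuinely new estimate is needed beyond Lemmas \ref{Lemma2.4}, \ref{Lemma2.7}, \ref{Lemma2.8} and \eqref{2.28}; the argument is purely a calibration/bookkeeping step, so I do not expect any serious obstacle.

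First I would introduce the shorthand
$$\Phi_{w}(R):=C_{f,\chi,|w|_{\gamma}}R+C_{g,\chi,|w|_{\gamma}}(R)(1+|w|_{\gamma}+|w^{2}|_{2\gamma})(1+|w|_{\gamma})^{2},$$
so that \eqref{2.28}, applied with the cut-off radius $R=R(w)$ and to the given pair $(y,y^{'}),(v,v^{'})\in\mathcal{D}^{2\gamma,\eta}_{w}([0,1];\mathcal{H})$, reads
$$\|\mathcal{T}_{R}(w,y,y^{'})-\mathcal{T}_{R}(w,v,v^{'}),(\mathcal{T}_{R}(w,y,y^{'})-\mathcal{T}_{R}(w,v,v^{'}))^{'}\|_{\mathcal{D}^{2\gamma,\eta}_{w}}\leq\Phi_{w}(R(w))\,\|y-v,(y-v)^{'}\|_{\mathcal{D}^{2\gamma,\eta}_{w}}.$$
Next I would record that $R\mapsto\Phi_{w}(R)$ is continuous and strictly increasing on $[0,\infty)$, with $\Phi_{w}(0)=0$ and $\Phi_{w}(R)\to\infty$ as $R\to\infty$: the term $C_{f,\chi,|w|_{\gamma}}R$ is increasing, while by Lemma \ref{Lemma2.8} one may take $C_{g,\chi,|w|_{\gamma}}(R)$ of the form $C_{g,|w|_{\gamma},\chi}(R+R^{2})$, which is strictly increasing and vanishes at $R=0$. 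Hence for the fixed $K>0$ equation \eqref{2.29} does admit a unique root $\tilde R(w)$, i.e. $\Phi_{w}(\tilde R(w))=K$. Since $R(w)=\min\{\tilde R(w),1\}\leq\tilde R(w)$ by \eqref{2.30}, monotonicity gives $\Phi_{w}(R(w))\leq\Phi_{w}(\tilde R(w))=K$, and inserting this into the displayed inequality yields exactly \eqref{2.31}.

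The only point requiring a bit of care — the closest thing to an obstacle here — is purely a matter of matching constants: one must make sure that $C_{f,\chi,|w|_{\gamma}}$ and the function $R\mapsto C_{g,\chi,|w|_{\gamma}}(R)$ entering \eqref{2.28} are literally the same objects used in the defining relation \eqref{2.29}, and that the strict monotonicity of $C(R)$ in $R$ (recorded right after \eqref{2.30}) is what licenses the inequality $\Phi_{w}(R(w))\leq\Phi_{w}(\tilde R(w))$ in the case $R(w)=1<\tilde R(w)$. Once this matching is verified, \eqref{2.31} follows at once.
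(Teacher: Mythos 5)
Your proposal is correct and follows essentially the same route the paper intends: the paper states Lemma \ref{Lemma2.9} as an immediate consequence of the contraction estimate \eqref{2.28} together with the calibration \eqref{2.29}--\eqref{2.30}, and your argument simply spells out the monotonicity in $R$ and the inequality $R(w)\leq\tilde R(w)$ that make this work. The constant-matching point you flag is indeed the only thing to check, and it holds since \eqref{2.29} is built from exactly the constants appearing in \eqref{2.28}.
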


\section{ Random Dynamical system}
In this section we will analyze the dynamics of REEs \eqref{2.2}. Firstly we recall some basic concepts and results on the random dynamical systems theory (\cite{MR1723992}, \cite{MR3624539}), which allow us to study invariant manifolds for \eqref{2.2}.
\begin{definition}\label{definition5}
 Let $(\Omega, \mathcal{F}, \mathbb{P})$ stand for a probability space and $\theta:R\times\Omega\rightarrow\Omega$ be a family of $\mathbb{P}$-preserving
transformations (i.e., $\theta_{t}\mathbb{P} = \mathbb{P}$ for $t\in \mathbb{R}$) having following properties:
\begin{itemize}
 \item the mapping $(t,\omega)\mapsto\theta_{t}\omega$ is $(\mathcal{B}(\mathbb{R})\otimes \mathcal{F},\mathcal{F})$-measurable, where $\mathcal{B}(\cdot)$ denotes the Borel sigma-algebra;
 \item  $\theta_{0}= Id_{\Omega};$
 \item $\theta_{t+s} = \theta_{t}\circ\theta_{s}$ for all $t, s\in\mathbb{ R}.$
 \end{itemize}
Then the quadruple $(\Omega,\mathcal{F}, \mathbb{P},(\theta_{t})_{t\in \mathbb{R}})$ is called a metric dynamical system.
\end{definition}

In our evolution setting, the construction of metric dynamical system depends on the construction of shift map $\Theta$. According to \cite{MR4284415} we know that shifts act quite naturally on rough paths. For an $\gamma$-H\"{o}lder rough path
$\mathbf{w}=(w,w^{2})$ and $t,\tau\in\mathbb{R}$, let us define the time-shift $\Theta_{\tau}\mathbf{w}=(\theta_{\tau}w,\tilde{\theta}_{\tau}w^{2})$ by
$$\qquad\theta_{\tau}w_{t}:=w_{t+\tau}-w_{\tau},$$
$$\tilde{\theta}_{\tau}w^{2}_{t,s}:=w^{2}_{t+\tau,s+\tau}.$$
Note that $\delta(\theta_{\tau}w)_{t,s}=w_{t+\tau}-w_{s+\tau}$. Furthermore, the shift leaves the path space invariant:
\begin{lemma}[\cite{MR4284415}]\label{lemma31}
Let $T_{1},T_{2},\tau\in\mathbb{R}$, and $\mathbf{w}=(w,w^{2})$ be an $\gamma$-H\"{o}lder rough path on $[T_{1}, T_{2}]$ for $\gamma\in(\frac{1}{3},\frac{1}{2}]$. Then the time-shift $\Theta_{\tau}\mathbf{w}=(\theta_{\tau}w,\tilde{\theta}_{\tau}w^{2})$ is also an $\gamma$-H\"{o}lder rough path on $[T_{1}-\tau, T_{2}-\tau]$.
\end{lemma}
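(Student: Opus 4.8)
The plan is to verify directly, for the pair $\Theta_{\tau}\mathbf{w}=(\theta_{\tau}w,\tilde{\theta}_{\tau}w^{2})$ on the shifted interval $[T_{1}-\tau,T_{2}-\tau]$, the three defining conditions of a $\gamma$-H\"older rough path from Definition~\ref{Definition2.1}: that $\theta_{\tau}w$ is $\gamma$-H\"older, that $\tilde{\theta}_{\tau}w^{2}$ is $2\gamma$-H\"older as a two-parameter object, and that the two levels satisfy Chen's relation. The only tool used throughout is the change of variables $t\mapsto t+\tau$, which is a bijection of $[T_{1}-\tau,T_{2}-\tau]$ onto $[T_{1},T_{2}]$ preserving differences of time points.

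First I would record the basic increment identities. For $s,t\in[T_{1}-\tau,T_{2}-\tau]$ one has $\delta(\theta_{\tau}w)_{t,s}=(w_{t+\tau}-w_{\tau})-(w_{s+\tau}-w_{\tau})=\delta w_{t+\tau,s+\tau}$, so that the additive constant $w_{\tau}$ cancels, while $\tilde{\theta}_{\tau}w^{2}_{t,s}=w^{2}_{t+\tau,s+\tau}$ by definition. Since $|t-s|=|(t+\tau)-(s+\tau)|$ and $t\mapsto t+\tau$ is onto the original interval, taking suprema gives exactly $|\theta_{\tau}w|_{\gamma}=|w|_{\gamma}$ and $|\tilde{\theta}_{\tau}w^{2}|_{2\gamma}=|w^{2}|_{2\gamma}$ (each seminorm being computed on its respective interval). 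In particular both quantities are finite, so $\theta_{\tau}w\in\mathcal{C}^{\gamma}([T_{1}-\tau,T_{2}-\tau];\mathbb{R}^{d})$ and $\tilde{\theta}_{\tau}w^{2}\in\mathcal{C}^{2\gamma}_{2}([T_{1}-\tau,T_{2}-\tau];\mathbb{R}^{d}\otimes\mathbb{R}^{d})$; continuity of $\theta_{\tau}w$ is inherited from that of $w$.

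Next I would check Chen's relation. Given $s\le u\le t$ in $[T_{1}-\tau,T_{2}-\tau]$, the shifted points satisfy $s+\tau\le u+\tau\le t+\tau$ in $[T_{1},T_{2}]$, so applying Chen's relation for $\mathbf{w}$ at these points and then rewriting via the increment identities above yields
$$\tilde{\theta}_{\tau}w^{2}_{t,s}-\tilde{\theta}_{\tau}w^{2}_{t,u}-\tilde{\theta}_{\tau}w^{2}_{u,s}=w^{2}_{t+\tau,s+\tau}-w^{2}_{t+\tau,u+\tau}-w^{2}_{u+\tau,s+\tau}=\delta w_{u+\tau,s+\tau}\otimes\delta w_{t+\tau,u+\tau}=\delta(\theta_{\tau}w)_{u,s}\otimes\delta(\theta_{\tau}w)_{t,u},$$
which is precisely Chen's relation for $\Theta_{\tau}\mathbf{w}$. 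Together with the previous paragraph this shows $\Theta_{\tau}\mathbf{w}\in\mathscr{C}^{\gamma}([T_{1}-\tau,T_{2}-\tau];\mathbb{R}^{d})$, as claimed.

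I do not anticipate any genuine obstacle: the statement is a bookkeeping lemma, and the only point requiring attention is that the second level is shifted \emph{without} subtracting a base point (in contrast to the first level), which is exactly what keeps the computation consistent---subtracting a constant from $w^{2}_{t+\tau,s+\tau}$ would destroy the bilinear right-hand side of Chen's relation. The argument is purely algebraic and metric, so it applies verbatim for every $\gamma\in(1/3,1/2]$, and it also shows that the rough path seminorms (hence, after appropriate choices, the relevant norms) are literally unchanged by the shift, a fact that will be convenient when constructing the metric dynamical system below.
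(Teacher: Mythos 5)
Your proof is correct: the change-of-variables verification of the H\"older seminorms and of Chen's relation is exactly the standard argument, and the paper itself gives no proof but cites \cite{MR4284415}, where the lemma is established in the same elementary way. Nothing is missing; your remark that the second level is shifted without subtracting a base point is precisely the point that makes the computation go through.
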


According to \cite{MR3624539}, we consider the follwing concept:
\begin{definition}[\cite{MR3624539}]\label{definition6}
Let $(\Omega,\mathcal{F}, \mathbb{P},(\theta_{t})_{t\in \mathbb{R}})$ be a metric dynamical system. We call $\mathbf{w}=(w,w^{2})$ a rough path cocycle if the identity
 $$\mathbf{w}_{t+s,s}(\omega)=\mathbf{w}_{t,0}(\theta_{s}\omega)$$
holds true for every $\omega\in\Omega$, $s\in \mathbb{R}$ and $t>0$.
\end{definition}
The previous definitions imply that one can use a space of paths as a probability space $\Omega$. As example 3.5 in \cite{MR4284415}, fractional Brownian motion $\mathbf{B^{H}}=(B^{H},\mathbb{B^{H}})$ represents a rough path cocycle, by the same construction of path-space  $(\Omega_{B^{H}},\mathcal{F}_{B^{H}}, \mathbb{P}_{B^{H}})$ of fractional Brownian motion(for further details see \cite{MR3624539}), we have the abstract definition of  metric dynamical systems for our problem modelling the underlying rough
driving process. Now we also need to define the dynamical system structure of the solution operators of our rough evolution equations \eqref{2.2}. Meanwhile, we recall the classical definition of random dynamical system \cite{MR1723992}.

\begin{definition}\label{definition7}
A random dynamical system $\varphi$ on $\mathcal{H}$ over a metric dynamical system $(\Omega,\mathcal{F}, \mathbb{P},(\theta_{t})_{t\in \mathbb{R}})$
is a measurable mapping
$$\varphi:[0,\infty)\times\Omega\times\mathcal{H}\rightarrow\mathcal{H},\quad(t,\omega,x)\mapsto\varphi(t,\omega,x),$$
such that:
\begin{itemize}
 \item $\varphi(0,\omega,\cdot)=Id_{x}$ for all $\omega\in\Omega$;
 \item  $\varphi(t+\tau,\omega,x)=\varphi(t,\theta_{\tau}\omega,\varphi(\tau,\omega,x))$, for all $x\in\mathcal{H}$, $t,\tau\in[0,\infty)$, $\omega\in\Omega$;
 \item $\varphi(t,\omega,\cdot):\mathcal{H}\rightarrow\mathcal{H}$ is continuous for all $t\in[0,\infty)$ and all $\omega\in\Omega$.
 \end{itemize}
\end{definition}

 Now one can hope that the solution operators of \eqref{2.2} generate random dynamical systems. As for all we know, the rough integral given in \eqref{2.6} is pathwise, no exceptional sets occur. For completeness, we give a proof of this fact, see also \cite{MR4284415}.

\begin{lemma}\label{lemma32}
Let $\mathbf{w}$ be a rough path cocycle, then the solution operator
$$t\mapsto\varphi(t,w,\xi)=y_{t}=S_{t}\xi+\int^{t}_{0}S_{tu}f(y_{u})du+\int^{t}_{0}S_{tu}g(y_{u})d\mathbf{w}_{u},$$
for any $t\in[0,\infty)$ of the REE \eqref{2.2} generates a random dynamical system over the metric dynamical
system $(\Omega_{w},\mathcal{F}_{w},\mathbb{P},(\theta_{t})_{t\in \mathbb{R}})$.
\end{lemma}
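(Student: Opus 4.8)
The plan is to verify, for the map $\varphi(t,w,\xi):=y_t$ with $(y,g(y))\in\mathcal{D}^{2\gamma,\eta}_{w}([0,\infty);\mathcal{H})$ the unique global solution from Theorem~\ref{theorem3} (in the sense of Remark~\ref{remark4}), the three defining properties of a random dynamical system in Definition~\ref{definition7}, together with measurability. The initial-value property $\varphi(0,w,\cdot)=\mathrm{Id}$ is immediate from the mild formulation \eqref{2.22}, since $S_{0}=\mathrm{Id}$ and both integrals over $[0,0]$ vanish. Continuity of $\varphi(t,w,\cdot)$ on $\mathcal{H}$ follows from Lipschitz dependence of the fixed point on the initial datum: on a small interval $[0,T_{0}]$ this is built into the contraction argument of Theorem~\ref{Theorem2.2}, and the a priori bound of Lemma~\ref{Lemma2.6} propagates it to any finite time by the same concatenation used in the proof of Theorem~\ref{theorem3}. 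Measurability of $(t,w,\xi)\mapsto\varphi(t,w,\xi)$ is obtained from continuity in $t$ together with joint continuity of the solution map in the rough-path metric $\varrho_{\gamma}$ and in $\xi$ (again from the contraction estimates, the rough integral \eqref{2.6} and the bounds of Theorem~\ref{Theorem2.1} being continuous in $\mathbf{w}$), combined with measurability of $\omega\mapsto\mathbf{w}(\omega)$ on $\Omega_{w}$; a Carath\'{e}odory-type argument then yields joint measurability.

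The core of the proof is the cocycle identity $\varphi(t+\tau,w,\xi)=\varphi(t,\theta_{\tau}w,\varphi(\tau,w,\xi))$. Fix $\tau\ge0$ and write $y_{t}=\varphi(t,w,\xi)$. Using the semigroup law $S_{t+\tau,\tau}S_{\tau,u}=S_{t+\tau,u}$ and, crucially, the additivity of the pathwise rough integral \eqref{2.6} over the adjacent intervals $[0,\tau]$ and $[\tau,t+\tau]$ (which holds with no exceptional set because the integral is a genuine limit of the compensated Riemann sums), one splits \eqref{2.22} at time $\tau$ to obtain, for all $t\ge0$,
\begin{equation*}
y_{t+\tau}=S_{t}y_{\tau}+\int_{\tau}^{t+\tau}S_{t+\tau,u}f(y_{u})\,du+\int_{\tau}^{t+\tau}S_{t+\tau,u}g(y_{u})\,d\mathbf{w}_{u}.
\end{equation*}
Next I substitute $u=v+\tau$ and set $(\theta_{\tau}y)_{v}:=y_{v+\tau}$, $(\theta_{\tau}y)'_{v}:=y'_{v+\tau}=g(y_{v+\tau})$. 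From $\delta(\theta_{\tau}w)_{v',v}=\delta w_{v'+\tau,v+\tau}$, $\tilde{\theta}_{\tau}w^{2}_{v',v}=w^{2}_{v'+\tau,v+\tau}$ and $S_{v'-v}=S_{v'+\tau,v+\tau}$ one checks that $\hat{\delta}(\theta_{\tau}y)_{v',v}=\hat{\delta}y_{v'+\tau,v+\tau}$ and $R^{\theta_{\tau}y}_{v',v}=R^{y}_{v'+\tau,v+\tau}$, so $(\theta_{\tau}y,\theta_{\tau}y')\in\mathcal{D}^{2\gamma,\eta}_{\theta_{\tau}w}([0,\infty);\mathcal{H})$ (here $\Theta_{\tau}\mathbf{w}$ is again a $\gamma$-H\"{o}lder rough path by Lemma~\ref{lemma31}), and the shifted Riemann sums for the last integral coincide termwise with those defining $\int_{0}^{t}S_{t,v}g((\theta_{\tau}y)_{v})\,d(\Theta_{\tau}\mathbf{w})_{v}$. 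Hence $v\mapsto y_{v+\tau}$ satisfies \eqref{2.22} on $[0,\infty)$ with driver $\Theta_{\tau}\mathbf{w}$ and initial datum $y_{\tau}$.

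Finally, the rough-path cocycle property of Definition~\ref{definition6}, $\mathbf{w}_{\cdot+\tau,\tau}(\omega)=\mathbf{w}_{\cdot,0}(\theta_{\tau}\omega)$, identifies $\Theta_{\tau}\mathbf{w}$ with the driver associated to $\theta_{\tau}w$; therefore $v\mapsto y_{v+\tau}$ is a global solution of the REE \eqref{2.2} over $\theta_{\tau}w$ with initial value $\varphi(\tau,w,\xi)$, and by the uniqueness part of Theorem~\ref{theorem3} it must equal $\varphi(\cdot,\theta_{\tau}w,\varphi(\tau,w,\xi))$. This gives $\varphi(t+\tau,w,\xi)=\varphi(t,\theta_{\tau}w,\varphi(\tau,w,\xi))$ for all $t\ge0$, completing the proof. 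I expect the main obstacle to be the bookkeeping in this shift step — verifying carefully that the translated path is a mildly controlled rough path with the correct Gubinelli derivative and remainder relative to $\Theta_{\tau}\mathbf{w}$, and that the semigroup-weighted rough integral is genuinely additive over concatenated intervals in this setting — rather than the two easier axioms or the measurability claim.
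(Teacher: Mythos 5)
Your proposal is correct and follows essentially the same route as the paper: the key steps — checking that the time-shifted path $(y_{\cdot+\tau},y'_{\cdot+\tau})$ is a mildly controlled rough path over $\Theta_{\tau}\mathbf{w}$, identifying the shifted rough integral through a termwise reindexing of the compensated Riemann sums as in \eqref{3.1}, and invoking the rough-path cocycle property together with uniqueness of the global solution — are exactly the ones the paper uses for the cocycle property. The remaining axioms and measurability, which you sketch via the contraction estimates, are likewise only sketched in the paper (deferred to \cite{MR11112} and \cite{MR4284415}).
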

\begin{proof}
The proof is analogous to \cite{MR11112} and \cite{MR4284415} Lemma 3.7. The difficultly is to check the cocycle property for the solution operator. Here we just prove the cocycle property. Firstly, we easily check that if $(y,y^{'})\in\mathcal{D}^{2\gamma,\eta}_{w}([T_{1}+\tau,T_{2}+\tau];\mathcal{H})$ then $(y_{\cdot+\tau},y^{'}_{\cdot+\tau})\in\mathcal{D}^{2\gamma,\eta}_{\theta_{\tau}w}([T_{1},T_{2}];\mathcal{H})$, here $T_{1},T_{2}\in\mathbb{R}$ with $T_{1}<T_{2}$. The $\gamma$-H\"{o}lder continuity of $y_{\cdot+\tau}$ and $y^{'}_{\cdot+\tau}$ is obvious. For the remainder we have
\begin{equation}
\begin{aligned}
\|R^{y_{\cdot+\tau}}_{t,s}\|_{\mathcal{H}_{-2\gamma}}&=\|\hat{\delta}y_{t+\tau,s+\tau}-S_{ts}y^{'}_{s+\tau}\delta w_{t+\tau,s+\tau}\|_{\mathcal{H}_{-2\gamma}}\nonumber\\
&=\|\hat{\delta}y_{t+\tau,s+\tau}-S_{(t+\tau)-(s+\tau)}y^{'}_{s+\tau}\delta w_{t+\tau,s+\tau}\|_{\mathcal{H}_{-2\gamma}}\\
&=\|R^{y}_{t+\tau,s+\tau}\|_{\mathcal{H}_{-2\gamma}}\leq|R^{y}|_{2\gamma,-2\gamma}|t-s|^{2\gamma}.
\end{aligned}
\end{equation}
Next, we will obtain the shift property of rough integral.  Let $\mathcal{P}$ be a partition of $[\tau,t+\tau]$, then we have
\begin{small}
\begin{equation}\label{3.1}
\begin{aligned}
&\int^{t+\tau}_{\tau}S_{t+\tau-u}g(y_{u})d\mathbf{w}_{u}\\
&=\lim\limits_{|\mathcal{P}|\rightarrow0}\sum\limits_{[u,v]\in\mathcal{P}}\left(S_{t+\tau-u}g(y_{u})\delta
w_{v,u}+S_{t+\tau-u}Dg(y_{u})y^{'}_{u}w^{2}_{v,u}\right)\\
&=\lim\limits_{|\mathcal{P}^{'}|\rightarrow0}\sum\limits_{[u^{'},v^{'}]\in\mathcal{P}^{'}}\left(S_{t-u^{'}}g(y_{u^{'}+\tau})\delta w_{v^{'}+\tau,u^{'}+\tau}+
  S_{t-u^{'}}Dg(y_{u^{'}+\tau})y^{'}_{u^{'}+\tau}w^{2}_{v^{'}+\tau,u^{'}+\tau}\right)\\
&=\lim\limits_{|\mathcal{P}^{'}|\rightarrow0}\sum\limits_{[u^{'},v^{'}]\in\mathcal{P}^{'}}\left(S_{t-u^{'}}g(y_{u^{'}+\tau})\delta(\theta_{\tau}w)_{v^{'},u^{'}}+
  S_{t-u^{'}}Dg(y_{u^{'}+\tau})y^{'}_{u^{'}+\tau}\tilde{\theta}_{\tau}w^{2}_{v^{'},u^{'}}\right)\\
&=\int^{t}_{0}S_{t-u^{'}}g(y_{u^{'}+\tau})d\Theta_{\tau}\mathbf{w}_{u^{'}},
\end{aligned}
\end{equation}
\end{small}
where $\mathcal{P}^{'}$ is a partition of $[0,t]$ given by $\mathcal{P}^{'}:=\{[s-\tau,t-\tau]:[s,t]\in\mathcal{P}\}$.
The proof of the cocycle property and mensurability of solution operators is similar to \cite{MR11112} and \cite{MR4284415}, here we omit.
\end{proof}
The next concept of tempered random
variables \cite{MR1723992} is of fundamental importance in the study of local random invariant manifolds.
\begin{definition}\label{definition8}
A random variable $\tilde{R}:\Omega\rightarrow(0,\infty)$ is called tempered from above, with respect to a
metric dynamical system $(\Omega,\mathcal{F}, \mathbb{P},(\theta_{t})_{t\in \mathbb{R}})$, if
\begin{equation}\label{3.2}
\begin{aligned}
\limsup_{t\rightarrow\pm\infty}\frac{\ln^{+}\tilde{R}(\theta_{t}\omega)}{|t|}=0,\quad\mbox{for}\quad\mbox{all}\quad \omega\in\Omega,
\end{aligned}
\end{equation}

where $\ln^{+}a:=\max\{\ln a,0\}$. A random variable is called tempered from below if $1/\tilde{R}$ is tempered from
above. A random variable is tempered if and only if is tempered from above and from below.
\end{definition}
The temperedness reflexes the subexponential growth of the mapping $t\rightarrow\tilde{R}(\theta_{t}\omega)$, according to \cite{MR1723992} Proposition 4.1.3, a sufficient condition for temperedness is
\begin{equation}\label{3.3}
\begin{aligned}
\mathbb{E}\sup_{t\in[0,1]}\tilde{R}(\theta_{t}\omega)<\infty.
\end{aligned}
\end{equation}
Moreover, if the random variable $\tilde{R}$ is tempered from below with $t\rightarrow\tilde{R}(\theta_{t}\omega)$ continuous for all $\omega\in\Omega$,
then for every $\varepsilon>0$ there exists a constant $C[\varepsilon,\omega]>0$ such that
\begin{equation}\label{3.4}
\begin{aligned}
\tilde{R}(\theta_{t}\omega)\geq C[\varepsilon,\omega]e^{-\varepsilon|t|},
\end{aligned}
\end{equation}
for any $\omega\in\Omega$.

According to \cite{MR4284415} Lemma 3.9 and Lemma 3.10, we can assume that $\mathbf{w}=(w,w^{2})$ is a rough path cocycle such that the random variables $$R_{1}(w)=|w|_{\gamma}\quad\mbox{and}\quad R_{2}(w^{2})=|w^{2}|_{2\gamma}$$
are tempered from above. These will be necessary for the proof of existence for a local unstable manifold. One needs to ensure that for initial conditions belonging to a ball with a sufficiently small tempered from below radius, the corresponding trajectories remain within such a ball( for further details, refer to \cite{MR2110052}, \cite{MR3289240}, \cite{MR2593602}, \cite{MR4284415}). By previous discussions, we easily obtain the result below:

\begin{lemma}\label{lemma33}
The random variable $R(w)$ in \eqref{2.30} is tempered from below.
\end{lemma}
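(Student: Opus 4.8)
The plan is to establish temperedness from below of $R(w)$ by showing directly that $1/R(w)$ is tempered from above, which is exactly what Definition~\ref{definition8} requires. Since $R(w)=\min\{\tilde R(w),1\}\le 1$, the bound from above on $R(w)$ is trivial, and all the work lies in bounding $R(w)$ \emph{from below} by a suitably slowly decaying random variable. First I would record that $\tilde R(w)$ is a genuinely well-defined positive number: the left-hand side of \eqref{2.29}, viewed as a function $F_{w}(\rho)$ of $\rho\ge 0$, is continuous, vanishes at $\rho=0$, is strictly increasing in $\rho$ (cf. Lemma~\ref{Lemma2.8} and the discussion preceding \eqref{2.29}), and is unbounded as $\rho\to\infty$; hence \eqref{2.29} has a unique positive root, $R(w)>0$ pointwise, and $1/R$ is a well-defined random variable with values in $(0,\infty)$.

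The key step is a pointwise lower bound for $R(w)$ in terms of $|w|_{\gamma}$ and $|w^{2}|_{2\gamma}$. Inspecting the proofs of Lemmas~\ref{Lemma2.7} and \ref{Lemma2.8}, the constants $C_{f,\chi,|w|_{\gamma}}$ and $C_{g,\chi,|w|_{\gamma}}$ enter only through finitely many factors $(1+|w|_{\gamma})$, so they are dominated by a polynomial in $|w|_{\gamma}$; moreover $C_{g,\chi,|w|_{\gamma}}(\rho)\lesssim(1+|w|_{\gamma})^{k}(\rho+\rho^{2})\le 2(1+|w|_{\gamma})^{k}\rho$ for $\rho\in(0,1]$. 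Hence for $\rho\in(0,1]$ one has $F_{w}(\rho)\le G(w)\,\rho$ with
\begin{equation}\nonumber
G(w):=C_{f,\chi}(1+|w|_{\gamma})^{m}+2\,C_{g,\chi}(1+|w|_{\gamma})^{k}(1+|w|_{\gamma}+|w^{2}|_{2\gamma})(1+|w|_{\gamma})^{2},
\end{equation}
and $G(w)=Q(|w|_{\gamma},|w^{2}|_{2\gamma})$ for a polynomial $Q$ with nonnegative coefficients. Then I would split cases on $\tilde R(w)$: if $\tilde R(w)\le 1$, then $K=F_{w}(\tilde R(w))\le G(w)\tilde R(w)$ forces $R(w)=\tilde R(w)\ge K/G(w)$; if $\tilde R(w)>1$, then $R(w)=1$. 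In either case
\begin{equation}\nonumber
R(w)\ \ge\ \min\Big\{1,\ \frac{K}{Q(|w|_{\gamma},|w^{2}|_{2\gamma})}\Big\},\qquad\text{so}\qquad \frac{1}{R(w)}\ \le\ 1+\frac{1}{K}\,Q(|w|_{\gamma},|w^{2}|_{2\gamma}).
\end{equation}

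It then remains to check that the right-hand side is tempered from above. By the standing assumption (via \cite{MR4284415}, Lemmas~3.9--3.10) the random variables $R_{1}(w)=|w|_{\gamma}$ and $R_{2}(w^{2})=|w^{2}|_{2\gamma}$ are tempered from above, and the class of nonnegative tempered-from-above random variables is closed under sums, products, and multiplication by positive constants (since $\ln^{+}(x+y)\le\ln 2+\ln^{+}x+\ln^{+}y$, $\ln^{+}(xy)\le\ln^{+}x+\ln^{+}y$ and $\ln^{+}(cx)\le\ln^{+}c+\ln^{+}x$). Hence $Q(|w|_{\gamma},|w^{2}|_{2\gamma})$, and therefore $1+K^{-1}Q(|w|_{\gamma},|w^{2}|_{2\gamma})$, is tempered from above; replacing $w$ by $\Theta_{t}w$ and using monotonicity of $\ln^{+}$ in the inequality above gives
\begin{equation}\nonumber
0\ \le\ \limsup_{t\to\pm\infty}\frac{\ln^{+}\!\big(1/R(\theta_{t}\omega)\big)}{|t|}\ \le\ \limsup_{t\to\pm\infty}\frac{\ln^{+}\!\big(1+K^{-1}Q(R_{1}(\theta_{t}\omega),R_{2}(\theta_{t}\omega))\big)}{|t|}\ =\ 0,
\end{equation}
so $R(w)$ is tempered from below. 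I expect the only real obstacle to be the bookkeeping in the key step, namely verifying from the proofs of Lemmas~\ref{Lemma2.7} and \ref{Lemma2.8} that the $w$-dependent constants grow at most polynomially in $|w|_{\gamma}$ so that $G(w)$ admits the polynomial majorant $Q$; everything afterwards (the case split, the passage to $1/R$, and stability of temperedness under polynomial operations) is routine. If continuity of $t\mapsto R(\theta_{t}\omega)$ is needed later to apply \eqref{3.4}, it follows from continuity of the shift on rough-path space together with continuity of $\tilde R(\cdot)$, which is immediate from the implicit description \eqref{2.29} and the strict monotonicity of $F_{w}$.
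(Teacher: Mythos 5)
Your proposal is correct and follows the route the paper itself intends: the paper gives no written proof of this lemma, asserting only that it follows ``by previous discussions'' from the assumed temperedness from above of $|w|_{\gamma}$ and $|w^{2}|_{2\gamma}$, and your argument --- bounding $1/R(w)$ by a polynomial in these two quantities via the defining relation \eqref{2.29}, the bound $C(\rho)\lesssim\rho$ for $\rho\le 1$, and the case split on $\tilde R(w)\le 1$ versus $\tilde R(w)>1$ --- supplies exactly the details that are left implicit. The one point you rightly flag, the at-most-polynomial dependence on $|w|_{\gamma}$ of the constants in Lemmas \ref{Lemma2.7} and \ref{Lemma2.8}, does hold since those constants enter only through finitely many factors of $(1+|w|_{\gamma})$, so the proof goes through as written.
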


\section{Local unstable manifolds for REEs }
In this section, we will study the existence of local unstable manifolds for (2.2) by the Lyapunov-Perron method which is similar to the one employed in \cite{MR2593602}, \cite{MR11112} and \cite{MR4284415}. However, here we want to connect the theory of random invariant manifolds for REEs as in (\cite{MR3376184}, \cite{MR2110052}, \cite{MR2593602}, \cite{MR11112}) to rough paths theory.

Firstly, as in \cite{MR2593602} and \cite{MR3289240}, we assume that the spectrum $\sigma(A)$ of linear operator $A$ only consists of a countable number of eigenvalues, and it splits as
\begin{equation}\label{4.1}
\begin{aligned}
\sigma(A)=\{\lambda_{k},k\in\mathbb{N}\}=\sigma_{u}\bigcup\sigma_{s},
\end{aligned}
\end{equation}
with both $\sigma_{u}$ and $\sigma_{s}$ nonempty, and
$$\sigma_{u}\subset\{z\in\mathbb{C}:Re z>0\}\quad \mbox{and}\quad \sigma_{s}\subset\{z\in\mathbb{C}:Re z<0\},$$
where $\mathbb{C}$ denotes the set of complex numbers and $\sigma_{u}=\{\lambda_{k},\cdot\cdot\cdot\lambda_{N}\}$ for some $N>0$.
Denote the corresponding eigenvectors for $\{\lambda_{k},k\in\mathbb{N}\}$ by $\{e_{1},\cdot\cdot\cdot,e_{N},e_{N+1},\cdot\cdot\cdot\}$,
furthermore, assume that the eigenvectors form an orthonormal basis of $\mathcal{H}$. Thus there is an invariant orthogonal decomposition $\mathcal{H}=\mathcal{H}_{u}\oplus\mathcal{H}_{s}$ with dim$\mathcal{H}_{u}=N$, such that for the restrictions which are $A_{u}=A|_{\mathcal{H}_{u}}$, $A_{s}=A|_{\mathcal{H}_{s}}$, one has $\sigma_{u}=\{z\in\sigma(A_{u})\}$ and $\sigma_{s}=\{z\in\sigma(A_{s})\}$. Moreover, $e^{A_{u}t}$ is a group of linear operators on $\mathcal{H}_{u}$, and there
exist projections $\pi^{u}$ and $\pi^{s}$, such that $\pi^{u}+\pi^{s}=Id_{\mathcal{H}}$, $A_{u}=\pi^{u}A$ and $A_{s}=\pi^{s}A$.
Furthermore, we assume that the projections $\pi^{u}$ and $\pi^{s}$ commute with $A$. Additionally, suppose that there are constants $0<\beta<\alpha$ such that
\begin{equation}\label{4.2}
\begin{aligned}
\|e^{tA_{u}}x\|\leq e^{\alpha t}\|x\|,\quad t\leq0,
\end{aligned}
\end{equation}
\begin{equation}\label{4.3}
\begin{aligned}
\|e^{tA_{s}}x\|\leq e^{-\beta t}\|x\|,\quad t\geq0.
\end{aligned}
\end{equation}

\begin{definition}\label{definition9}
 If a random set $\mathcal{M}^{u}(w)$, which is invariant respect to random dynamical system $\varphi$ (i.e. $\varphi(t,w,\mathcal{M}^{u}(w))\subset\mathcal{M}^{u}(\theta_{t}w)$) for $t\in\mathbb{R}$ and $w\in\Omega_{w}$, can be represented as
\begin{equation}\label{4.4}
\begin{aligned}
\mathcal{M}^{u}(w)=\{\xi+h^{u}(\xi,w):\xi\in\mathcal{H}^{u}\},
\end{aligned}
\end{equation}
where $h^{u}(\xi,W):\mathcal{H}^{u}\rightarrow\mathcal{H}^{s}$. Then we call $\mathcal{M}^{u}(w)$ an unstable manifold. Moreover, $h^{u}(0,w)=0$ and $\mathcal{M}^{u}(w)$ is tangent to $\mathcal{H}^{u}$ at the origin, meaning that the tangency condition $Dh^{u}(0,w)=0$ is satisfied.
\end{definition}


Here, we employ the Lyapunov-Perron method which is similar with \cite{MR11112} and \cite{MR4284415}. As well, in our case, the continuous-time Lyapunov-Perron
map for \eqref{2.2} is presented by 
\begin{equation}\label{4.5}
\begin{aligned}
J(w,y)[\tau]&:=S^{u}_{\tau}\xi^{u}+\int^{\tau}_{0}S^{u}_{\tau u}\pi^{u}f(y_{u})du+\int^{\tau}_{0}S^{u}_{\tau u}\pi^{u}g(y_{u})d\mathbf{w}_{u}\\
              &\quad+\int^{\tau}_{-\infty}S^{s}_{\tau u}\pi^{s}f(y_{u})du+\int^{\tau}_{-\infty}S^{s}_{\tau u}\pi^{s}g(y_{u})d\mathbf{w}_{u}
\end{aligned}
\end{equation}
for $\tau\leq0$.
  Thanks to the presence of the rough integral we couldn't directly deal with \eqref{4.5}, we need to track $|w|_{\gamma}$ and $|w^{2}|_{\gamma}$ that appear in \eqref{2.14} on a finite-time horizon. Similar to \cite{MR2593602} and \cite{MR11112}, we derive an appropriate discretized Lyapunov-Perron map and prove that it has a fixed-point in a suitable function space. The local unstable manifold will be developed for the discrete-time random dynamical system and will be shown that it holds true for the original continuous-time one, as in \cite{MR2593602}.

Analogous to \cite{MR2593602}, \cite{MR11112} and \cite{MR4284415}, we only need to deal with rough integral on time-interval $[0,1]$. Let $w\in\Omega_{w}$, $t\in[0,1]$ and $i\in\mathbb{Z}^{-}$, replacing $\tau$ by $t+i-1$ in \eqref{4.5}, we have
\begin{small}
\begin{equation}\label{4.6}
\begin{aligned}
&J(w,y)[t+i-1]\\
 &=S^{u}_{t+i-1}\xi^{u}\\
 &\quad-\sum^{i+1}_{k=0}S^{u}_{t+i-1-k}\left(\int^{1}_{0}S^{u}_{1-u}\pi^{u}f(y_{u+k-1})du+\int^{1}_{0}S^{u}_{1-u}\pi^{u}g(y_{u+k-1})d\Theta_{k-1}\mathbf{w}_{u}\right)\\
 &\quad-\int^{1}_{t}S^{u}_{t-u}\pi^{u}f(y_{u+i-1})du-\int^{1}_{t}S^{u}_{t-u}\pi^{u}g(y_{u+i-1})d\Theta_{i-1}\mathbf{w}_{u}\\
 &\quad+\sum^{i-1}_{k=-\infty}S^{s}_{t+i-1-k}\left(\int^{1}_{0}S^{s}_{1-u}\pi^{s}f(y_{u+k-1})du+\int^{1}_{0}S^{s}_{1-u}\pi^{s}g(y_{u+k-1})d\Theta_{k-1}\mathbf{w}_{u}\right)\\
 &\quad+\int^{t}_{0}S^{s}_{t-u}\pi^{s}f(y_{u+i-1})du+\int^{t}_{0}S^{s}_{t-u}\pi^{s}g(y_{u+i-1})d\Theta_{i-1}\mathbf{w}_{u},
\end{aligned}
\end{equation}
\end{small}
by applying \eqref{4.6}, we will give the structure of the discrete Lyapunov-Perron map.
for $(y,y^{'})\in\mathcal{D}^{2\gamma,\eta}_{w}([0,1];\mathcal{H})$, we denote
\begin{equation}\label{4.7}
\begin{aligned}
\mathcal{T}^{s/u}(w,y,y^{'})[\cdot]=\int^{\cdot}_{0}S^{s/u}_{t-u}\pi^{s/u}f(y_{u})du+\int^{\cdot}_{0}S^{s/u}_{t-u}\pi^{s/u}g(y_{u})d\mathbf{w}_{u},
\end{aligned}
\end{equation}
\begin{equation}\label{4.8}
\begin{aligned}
\tilde{\mathcal{T}}^{u}(w,y,y^{'})[\cdot]=\int^{1}_{\cdot}S^{u}_{t-u}\pi^{u}f(y_{u})du+\int^{1}_{\cdot}S^{u}_{t-u}\pi^{u}g(y_{u})d\mathbf{w}_{u},
\end{aligned}
\end{equation}
where $\big(\mathcal{T}^{s/u}(w,y,y^{'})[\cdot]\big)^{'}=\big(\tilde{\mathcal{T}}^{u}(w,y,y^{'})[\cdot]\big)^{'}=g(y_{\cdot})$. Meanwhile, in our evolution setting, we directly deal with solutions of the REEs \eqref{2.2}. It is essential problem that we need to find an appropriate space for the fixed-point argument. For this, similar to \cite{MR11112} and \cite{MR4284415} we introduce the following function space which helps us incorporate the discretized version of \eqref{4.5}.

Let $\delta=\frac{\alpha-\beta}{2}>0$,
we let $BC_{\delta}(\mathcal{D}^{2\gamma,\eta}_{w})$ be the space of a sequence of mildly controlled rough paths $(\mathbf{y},\mathbf{y}^{'}):=(y^{i-1},(y^{i-1})^{'})_{i\in\mathbb{Z}^{-}}$ with $y^{i-1}_{0}=y^{i-2}_{1}$, where $(y^{i-1},(y^{i-1})^{'})\in\mathcal{D}^{2\gamma,\eta}_{w}([0,1],\mathcal{H})$, if
\begin{equation}\label{4.9}
\|\mathbf{y},\mathbf{y}^{'}\|_{BC_{\delta}(\mathcal{D}^{2\gamma,\eta}_{W})}:=\sup_{i\in\mathbb{Z}^{-}}e^{-\delta(i-1)}\|y^{i-1},(y^{i-1})^{'}\|_{\mathcal{D}^{2\gamma,\eta}_{w}([0,1];\mathcal{H})}<\infty.
\end{equation}
In the following, for notational simplicity, we denote $\tilde{y}[i-1,t]=\tilde{y}^{i-1}_{t}$ for $t\in[0,1]$ and $\tilde{y}[\tau]=\tilde{y}[i-1,t]$ if $\tau=t+i-1$.

Next, we modify \eqref{2.2} by the cut-off function given in Section 2, i.e. we replace $f$ and $g$ by $f_{R}$ respectively $g_{R}$.
According to \eqref{4.6}, it is reasonable to introduce the discrete Lyapunov-Perron transform
$J_{d}(w,\mathbf{y},\xi)$ for a sequence of mildly controlled rough paths  as the pair $J_{d}(w,\mathbf{y},\xi):=(J^{1}_{d}(w,\mathbf{y},\xi),J^{2}_{d}(w,\mathbf{y},\xi))$, where $\mathbf{y}\in BC_{\delta}(\mathcal{D}^{2\gamma,\eta}_{w})$ and $\xi\in\mathcal{H}$, the precise structure is given below. The dependence of $J_{d}$ on the cut-off parameter $R$ is indicated by the subscript $R$. For $t\in[0,1]$, $w\in\Omega_{w}$ and $i\in\mathbb{Z}^{-}$, we define
\begin{small}
\begin{equation}\label{4.10}
\begin{aligned}
 &J^{1}_{R,d}(w,\mathbf{y},\xi)[i-1,t]\\
 &=S^{u}_{t+i-1}\xi^{u}
 -\sum^{i+1}_{k=0}S^{u}_{t+i-1-k}\left(\int^{1}_{0}S^{u}_{1-u}\pi^{u}f_{R}(y^{k-1}_{u})du+\int^{1}_{0}S^{u}_{1-u}\pi^{u}g_{R}(y^{k-1}_{u})d\Theta_{k-1}\mathbf{w}_{u}\right)\\
 &\quad-\int^{1}_{t}S^{u}_{t-u}\pi^{u}f_{R}(y^{i-1}_{u})du-\int^{1}_{t}S^{u}_{t-u}\pi^{u}g_{R}(y^{i-1}_{u})d\Theta_{i-1}\mathbf{w}_{u}\\
 &\quad+\sum^{i-1}_{k=-\infty}S^{s}_{t+i-1-k}\left(\int^{1}_{0}S^{s}_{1-u}\pi^{s}f_{R}(y^{k-1}_{u})du+\int^{1}_{0}S^{s}_{1-u}\pi^{s}g_{R}(y^{k-1}_{u})d\Theta_{k-1}\mathbf{w}_{u}\right)\\
 &\quad+\int^{t}_{0}S^{s}_{t-u}\pi^{s}f_{R}(y^{i-1}_{u})du+\int^{t}_{0}S^{s}_{t-u}\pi^{s}g_{R}(y^{i-1}_{u})d\Theta_{i-1}\mathbf{w}_{u},
\end{aligned}
\end{equation}
\end{small}
Moreover, $J^{2}_{R,d}(w,\mathbf{y},\xi)$ is denoted as the mildly Gubinelli derivative of $J^{1}_{R,d}(w,\mathbf{y},\xi)$, i.e. $J^{2}_{R,d}(w,\mathbf{y},\xi)[i-1,t]:=(J^{1}_{R,d}(w,\mathbf{y},\xi)[i-1,t])^{'}$. Notice that one can easily obtain $\xi^{u}=\pi^{u}J^{1}_{R,d}(w,\mathbf{y},\xi)[-1,1]$ by setting $i=0$ and $t=1$.

In the following, we will prove that \eqref{4.10} maps $(\mathbf{y},\mathbf{y}^{'})\in BC_{\delta}(\mathcal{D}^{2\gamma,\eta}_{w})$ into itself and is a contractive mapping if the constant $K$ given in \eqref{2.29} is chosen small enough.
\begin{theorem}\label{theorem14}
In our setting, if $K$ satisfies the gap condition
\begin{equation}\label{4.11}
\begin{aligned}
K\left(\frac{e^{\beta+\delta}(Ce^{-\delta}+1)}{1-e^{-(\beta+\delta)}}+\frac{(e^{-(\alpha-\delta)}-1)(Ce^{-\delta}+e^{\alpha-\delta})}{1-e^{\alpha-\delta}}\right)\leq \frac{1}{2},
\end{aligned}
\end{equation}
then, the map $J_{R,d}:\Omega\times BC_{\delta}(\mathcal{D}^{2\gamma,\eta}_{w})\rightarrow BC_{\delta}(\mathcal{D}^{2\gamma,\eta}_{w})$ possesses a unique fixed-point $\Gamma\in BC_{\delta}(\mathcal{D}^{2\gamma,\eta}_{w})$. Also, the map $\xi^{u}\rightarrow\Gamma(\xi^{u},w)\in BC_{\delta}(\mathcal{D}^{2\gamma,\eta}_{w})$ is Lipschitz continuous.
\end{theorem}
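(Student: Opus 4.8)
The plan is to apply the Banach fixed-point theorem to $J_{R,d}$ on the Banach space $BC_{\delta}(\mathcal{D}^{2\gamma,\eta}_{w})$, treating the space-index $i\in\mathbb{Z}^{-}$ as a discrete time. First I would verify that $J_{R,d}$ is well-defined, i.e.\ that for $(\mathbf{y},\mathbf{y}^{'})\in BC_{\delta}(\mathcal{D}^{2\gamma,\eta}_{w})$ the infinite series over $k$ in the stable part of \eqref{4.10} converges in $\mathcal{D}^{2\gamma,\eta}_{w}([0,1];\mathcal{H})$ and that the gluing condition $y^{i-1}_{0}=y^{i-2}_{1}$ is preserved by $J_{R,d}$; the latter follows from the cocycle/shift identity \eqref{3.1} and the concatenation structure built into \eqref{4.6}. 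The key quantitative input is that, after the cut-off, Lemma \ref{Lemma2.9} gives the uniform Lipschitz bound $K$ for $\mathcal{T}_{R}$ on each unit interval; combined with \eqref{4.7}--\eqref{4.8}, the maps $\mathcal{T}^{s/u}_{R}$ and $\tilde{\mathcal{T}}^{u}_{R}$ inherit Lipschitz constants bounded by (a constant times) $K$, uniformly over the unit interval and over shifts $\Theta_{k-1}\mathbf{w}$ — here one uses that $R=R(w)$ in \eqref{2.29}--\eqref{2.30} was calibrated so the constant is exactly $K$, and temperedness of $|w|_\gamma,|w^2|_{2\gamma}$ (Lemma \ref{lemma33} and the surrounding discussion) keeps everything controlled along the orbit.

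\textbf{Contraction estimate.} For two sequences $(\mathbf{y},\mathbf{y}^{'}),(\mathbf{v},\mathbf{v}^{'})\in BC_{\delta}(\mathcal{D}^{2\gamma,\eta}_{w})$, I would estimate $\|J_{R,d}(w,\mathbf{y},\xi)[i-1,\cdot]-J_{R,d}(w,\mathbf{v},\xi)[i-1,\cdot]\|_{\mathcal{D}^{2\gamma,\eta}_{w}}$ term by term in \eqref{4.10}. The $\xi^{u}$-term cancels. For the unstable sum $\sum_{k=0}^{i+1}$, each summand carries the semigroup factor $\|S^{u}_{t+i-1-k}\|\lesssim e^{-(\alpha-\delta)\,(\text{something})}$ via the backward bound \eqref{4.2} together with the interval-by-interval Lipschitz constant $K$; the weight $e^{-\delta(i-1)}$ then produces a geometric series in $e^{\alpha-\delta}$, yielding the second bracket in \eqref{4.11}. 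Symmetrically, the stable sum $\sum_{k=-\infty}^{i-1}$ uses the forward decay \eqref{4.3}, i.e.\ $\|S^{s}_{t+i-1-k}\|\lesssim e^{-\beta(i-1-k)}$, and summing the geometric series in $e^{-(\beta+\delta)}$ gives the first bracket in \eqref{4.11}; the factor $Ce^{-\delta}+1$ (resp.\ $Ce^{-\delta}+e^{\alpha-\delta}$) accounts for the "diagonal" single-interval integral terms $\int_0^t$ and $\int_t^1$ bundled with the corresponding series term, where $C$ is the universal constant from \eqref{2.7}/Lemma \ref{Lemma2.4}. Adding the two contributions and using the gap condition \eqref{4.11} gives
\begin{equation}\nonumber
\|J_{R,d}(w,\mathbf{y},\xi)-J_{R,d}(w,\mathbf{v},\xi)\|_{BC_{\delta}(\mathcal{D}^{2\gamma,\eta}_{w})}\leq\tfrac12\|\mathbf{y}-\mathbf{v},\mathbf{y}^{'}-\mathbf{v}^{'}\|_{BC_{\delta}(\mathcal{D}^{2\gamma,\eta}_{w})}.
\end{equation}
Taking $\mathbf{v}\equiv 0$ (using $f_R(0)=g_R(0)=0$) in the same computation shows $J_{R,d}$ maps $BC_{\delta}(\mathcal{D}^{2\gamma,\eta}_{w})$ into itself, so Banach's theorem provides the unique fixed-point $\Gamma=\Gamma(\xi^{u},w)$.

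\textbf{Lipschitz dependence on $\xi^{u}$.} For the last claim, note that $J_{R,d}(w,\mathbf{y},\xi)-J_{R,d}(w,\mathbf{y},\tilde\xi)$ differs only through the term $S^{u}_{t+i-1}(\xi^{u}-\tilde\xi^{u})$, which is bounded in $\mathcal{D}^{2\gamma,\eta}_{w}$ by $C e^{(\alpha-\delta)(i-1)}\|\xi^{u}-\tilde\xi^{u}\|$ (again by \eqref{4.2}), hence the $BC_{\delta}$-weight $e^{-\delta(i-1)}$ leaves $Ce^{(\alpha-2\delta)(i-1)}\le C$ for $i\le 0$ since $\alpha-2\delta=\beta>0$. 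Writing $\Gamma(\xi^{u})-\Gamma(\tilde\xi^{u})=J_{R,d}(w,\Gamma(\xi^{u}),\xi)-J_{R,d}(w,\Gamma(\tilde\xi^{u}),\tilde\xi)$, adding and subtracting $J_{R,d}(w,\Gamma(\tilde\xi^{u}),\xi)$, and using the $\tfrac12$-contraction in the $\mathbf{y}$-argument, one gets $\|\Gamma(\xi^{u},w)-\Gamma(\tilde\xi^{u},w)\|_{BC_{\delta}}\le 2C\|\xi^{u}-\tilde\xi^{u}\|$. The main obstacle is bookkeeping the interplay of the two geometric series with the correct semigroup exponents and the single-interval "diagonal" terms so that the constants assemble exactly into the bracket in \eqref{4.11}; the rough-path analytic difficulty has already been absorbed into Lemmas \ref{Lemma2.4} and \ref{Lemma2.9}, so the remaining work is the weighted-sequence-space estimate.
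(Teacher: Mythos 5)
Your proposal is correct and follows essentially the same route as the paper: a Banach fixed-point argument in the weighted sequence space $BC_{\delta}(\mathcal{D}^{2\gamma,\eta}_{w})$, using the cut-off Lipschitz constant $K$ from Lemma \ref{Lemma2.9} on each unit interval, the dichotomy bounds \eqref{4.2}--\eqref{4.3} to sum the stable and unstable geometric series into the two brackets of \eqref{4.11}, the choice $\mathbf{v}\equiv 0$ with $f_{R}(0)=g_{R}(0)=0$ for self-mapping, and the standard add-and-subtract argument (with the $\tfrac12$-contraction and the bound on $S^{u}_{\cdot+i-1}(\xi^{u}_{1}-\xi^{u}_{2})$) for Lipschitz dependence on $\xi^{u}$. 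The constants and bookkeeping you describe match the paper's computation, so no further comparison is needed.
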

\begin{proof}
Let $(\mathbf{y},\mathbf{y}^{'}):=(y^{i-1},(y^{i-1})^{'})_{i\in\mathbb{Z}^{-}}$ and $(\mathbf{v},\mathbf{v}^{'}):=(v^{i-1},(v^{i-1})^{'})_{i\in\mathbb{Z}^{-}}\in BC_{\delta}(\mathcal{D}^{2\gamma,\eta}_{w})$ with $\pi^{u}y^{-1}=\pi^{u}v^{-1}=\xi^{u}$. Firstly, we give several estimates which is essential for the proof. According to Lemma 2.5, we easily have
\begin{equation}\label{4.12}
\begin{aligned}
\|S^{u}_{\cdot+i-1}\xi^{u},0\|_{BC_{\delta}(\mathcal{D}^{2\gamma,\eta}_{w})}\leq Ce^{(\alpha-\delta)(i-1)}\|\xi^{u},0\|_{\mathcal{D}^{2\gamma,\eta}_{W}}\leq Ce^{(\alpha-\delta)(i-1)}\|\xi^{u}\|,
\end{aligned}
\end{equation}
the above expression keeps bounded for $i\in\mathbb{Z}^{-}$.
Denote
 $$\Lambda=\mathcal{T}^{s}_{R}(\theta_{k-1}w,y^{k-1},(y^{k-1})^{'})[1]-\mathcal{T}^{s}_{R}(\theta_{k-1}w,v^{k-1},(v^{k-1})^{'})[1],$$ from \eqref{2.14}, one has
 $$\|\Lambda\|_{\mathcal{H}}\leq\|y^{k-1}-v^{k-1},(y^{k-1}-v^{k-1})^{'}\|_{\mathcal{D}^{2\gamma,\eta}_{w}}$$
by \eqref{4.12}, we have
\begin{equation}\label{4.13}
\begin{aligned}
\|S^{s}_{\cdot+i-1-k}\Lambda,(S^{s}_{\cdot+i-1-k}\Lambda)^{'}\|_{\mathcal{D}^{2\gamma,\eta}_{w}}&=\|S^{s}_{\cdot+i-1-k}\Lambda,0\|_{\mathcal{D}^{2\gamma,\eta}_{w}}\\
&\leq Ce^{-\beta(i-1-k)}\|\Lambda\|_{\mathcal{H}}\\
&\leq Ce^{-\beta(i-1-k)}\|y^{k-1}-v^{k-1},(y^{k-1}-v^{k-1})^{'}\|_{\mathcal{D}^{2\gamma,\eta}_{w}}.
\end{aligned}
\end{equation}
Similarly, denote
 $$\tilde{\Lambda}=\mathcal{T}^{u}_{R}(\theta_{k-1}w,y^{k-1},(y^{k-1})^{'})[1]-\mathcal{T}^{u}_{R}(\theta_{k-1}w,v^{k-1},(v^{k-1})^{'})[1],$$we easily have
 \begin{equation}\label{4.14}
\begin{aligned}
\|S^{u}_{\cdot+i-1-k}\tilde{\Lambda},(S^{u}_{\cdot+i-1-k}\tilde{\Lambda})^{'}\|_{\mathcal{D}^{2\gamma,\eta}_{w}}
\leq Ce^{\alpha(i-1-k)}\|y^{k-1}-v^{k-1},(y^{k-1}-v^{k-1})^{'}\|_{\mathcal{D}^{2\gamma,\eta}_{w}}.
\end{aligned}
\end{equation}

Next, for the stable part of \eqref{4.10}, due to \eqref{2.31}, \eqref{4.13} and the norm of $BC_{\delta}(\mathcal{D}^{2\gamma,\eta}_{w})$, we have
\begin{small}
\begin{equation}
\begin{aligned}\nonumber
 &\sum^{i-1}_{k=-\infty}e^{-\delta(i-1)}\|S^{s}_{\cdot+i-1-k}(\mathcal{T}^{s}_{R}(\theta_{k-1}w,y^{k-1},(y^{k-1})^{'})[1]-\mathcal{T}^{s}_{R}(\theta_{k-1}w,v^{k-1},(v^{k-1})^{'})[1]),\\
 &\qquad \left(S^{s}_{\cdot+i-1-k}(\mathcal{T}^{s}_{R}(\theta_{k-1}w,y^{k-1},(y^{k-1})^{'})[1]-\mathcal{T}^{s}_{R}(\theta_{k-1}w,v^{k-1},(v^{k-1})^{'})[1])\right)^{'}\|_{\mathcal{D}^{2\gamma,\eta}_{w}}\\
 &\quad+e^{-\delta(i-1)}\|\mathcal{T}^{s}_{R}(\theta_{i-1}w,y^{i-1},(y^{i-1})^{'})[\cdot]-\mathcal{T}^{s}_{R}(\theta_{i-1}w,v^{i-1},(v^{i-1})^{'})[\cdot],\\
 &\qquad\left(\mathcal{T}^{s}_{R}(\theta_{i-1}w,y^{i-1},(y^{i-1})^{'})[\cdot]-\mathcal{T}^{s}_{R}(\theta_{i-1}w,v^{i-1},(v^{i-1})^{'})[\cdot]\right)^{'}\|_{\mathcal{D}^{2\gamma,\eta}_{w}}\\
 &\leq\sum^{i-1}_{k=-\infty}e^{-\delta(i-1)}Ce^{-\beta(i-1-k)}K\|y^{k-1}-v^{k-1},(y^{k-1}-v^{k-1})^{'}\|_{\mathcal{D}^{2\gamma,\eta}_{w}}\\
 &\quad+e^{-\delta(i-1)}K\|y^{i-1}-v^{i-1},(y^{i-1}-v^{i-1})^{'}\|_{\mathcal{D}^{2\gamma,\eta}_{w}}\\
 &\leq\sum^{i-1}_{k=-\infty}e^{-\delta(i-1)}Ce^{-\beta(i-1-k)}e^{\delta(k-1)}Ke^{-\delta(k-1)}\|y^{k-1}-v^{k-1},(y^{k-1}-v^{k-1})^{'}\|_{\mathcal{D}^{2\gamma,\eta}_{w}}\\
 &\quad+e^{-\delta(i-1)}K\|y^{i-1}-v^{i-1},(y^{i-1}-v^{i-1})^{'}\|_{\mathcal{D}^{2\gamma,\eta}_{w}}\\
\end{aligned}
\end{equation}
\end{small}
\begin{small}
\begin{equation}
\begin{aligned}\nonumber
 &\leq\sum^{i-1}_{k=-\infty}e^{-(\beta+\delta)(i-1-k)}Ce^{-\delta}Ke^{-\delta(k-1)}\|y^{k-1}-v^{k-1},(y^{k-1}-v^{k-1})^{'}\|_{\mathcal{D}^{2\gamma,\eta}_{w}}\\
 &\quad+e^{-(\beta+\delta)(i-1-i)}e^{-\delta(i-1)}K\|y^{i-1}-v^{i-1},(y^{i-1}-v^{i-1})^{'}\|_{\mathcal{D}^{2\gamma,\eta}_{w}}\\
 &\leq\sum^{i}_{k=-\infty}e^{-(\beta+\delta)(i-1-k)}K(Ce^{-\delta}+1)e^{-\delta(k-1)}\|y^{k-1}-v^{k-1},(y^{k-1}-v^{k-1})^{'}\|_{\mathcal{D}^{2\gamma,\eta}_{w}}\\
 &\leq \frac{Ke^{\beta+\delta}(Ce^{-\delta}+1)}{1-e^{-(\beta+\delta)}}\|\mathbf{y}-\mathbf{v},(\mathbf{y}-\mathbf{v})^{'}\|_{BC_{\delta}(\mathcal{D}^{2\gamma,\eta}_{w})}.
\end{aligned}
\end{equation}
\end{small}
Similarly, for the unstable part, according to \eqref{2.31}, \eqref{4.14} and the norm of $BC_{\delta}(\mathcal{D}^{2\gamma,\eta}_{w})$, we have
\begin{small}
\begin{equation}
\begin{aligned}\nonumber
 &\sum^{i+1}_{k=0}e^{-\delta(i-1)}\|S^{u}_{\cdot+i-1-k}(\mathcal{T}^{u}_{R}(\theta_{k-1}w,y^{k-1},(y^{k-1})^{'})[1]-\mathcal{T}^{u}_{R}(\theta_{k-1}w,v^{k-1},(v^{k-1})^{'})[1]),\\
 &\qquad\left(S^{u}_{\cdot+i-1-k}(\mathcal{T}^{u}_{R}(\theta_{k-1}w,y^{k-1},(y^{k-1})^{'})[1]-\mathcal{T}^{u}_{R}(\theta_{k-1}w,v^{k-1},(v^{k-1})^{'})[1])\right)^{'}\|_{\mathcal{D}^{2\gamma,\eta}_{w}}\\
 &\quad+e^{-\delta(i-1)}\|\tilde{\mathcal{T}}^{u}_{R}(\theta_{i-1}w,y^{i-1},(y^{i-1})^{'})[\cdot]-\tilde{\mathcal{T}}^{u}_{R}(\theta_{i-1}w,v^{i-1},(v^{i-1})^{'})[\cdot],\\
 &\qquad\left(\tilde{\mathcal{T}}^{u}_{R}(\theta_{i-1}w,y^{i-1},(y^{i-1})^{'})[\cdot]-\tilde{\mathcal{T}}^{u}_{R}(\theta_{i-1}w,v^{i-1},(v^{i-1})^{'})[\cdot]\right)^{'}\|_{\mathcal{D}^{2\gamma,\eta}_{w}}\\
 &\leq\sum^{i+1}_{k=0}e^{-\delta(i-1)}Ce^{\alpha(i-1-k)}K\|y^{k-1}-v^{k-1},(y^{k-1}-v^{k-1})^{'}\|_{\mathcal{D}^{2\gamma,\eta}_{w}}\\
 &\quad+e^{-\delta(i-1)}K\|y^{i-1}-v^{i-1},(y^{i-1}-v^{i-1})^{'}\|_{\mathcal{D}^{2\gamma,\eta}_{w}}\\
 &\leq\sum^{i+1}_{k=0}e^{-\delta(i-1)}Ce^{\alpha(i-1-k)}e^{\delta(k-1)}Ke^{-\delta(k-1)}\|y^{k-1}-v^{k-1},(y^{k-1}-v^{k-1})^{'}\|_{\mathcal{D}^{2\gamma,\eta}_{w}}\\
 &\quad+e^{-\delta(i-1)}K\|y^{i-1}-v^{i-1},(y^{i-1}-v^{i-1})^{'}\|_{\mathcal{D}^{2\gamma,\eta}_{w}}\\
  \end{aligned}
\end{equation}
\end{small}
\begin{small}
\begin{equation}
\begin{aligned}\nonumber
 &\leq\sum^{i+1}_{k=0}e^{(\alpha-\delta)(i-1-k)}Ce^{-\delta}Ke^{-\delta(k-1)}\|y^{k-1}-v^{k-1},(y^{k-1}-v^{k-1})^{'}\|_{\mathcal{D}^{2\gamma,\eta}_{w}}\\
 &\quad+e^{(\alpha-\delta)(i-1-i)}e^{\alpha-\delta}e^{-\delta(i-1)}K\|y^{i-1}-v^{i-1},(y^{i-1}-v^{i-1})^{'}\|_{\mathcal{D}^{2\gamma,\eta}_{w}}\\
 &\leq\sum^{i}_{k=0}e^{(\alpha-\delta)(i-1-k)}K(Ce^{-\delta}+e^{\alpha-\delta})e^{-\delta(k-1)}\|y^{k-1}-v^{k-1},(y^{k-1}-v^{k-1})^{'}\|_{\mathcal{D}^{2\gamma,\eta}_{w}}\\
 &\leq\frac{K(e^{-(\alpha-\delta)}-1)(Ce^{-\delta}+e^{\alpha-\delta})}{1-e^{\alpha-\delta}}\|\mathbf{y}-\mathbf{v},(\mathbf{y}-\mathbf{v})^{'}\|_{BC_{\delta}(\mathcal{D}^{2\gamma,\eta}_{w})}.
\end{aligned}
\end{equation}
\end{small}
Combining previous estimates, we obtain that
$$\|J_{R,d}(w,\mathbf{y},\xi)-J_{R,d}(w,\mathbf{v},\xi)\|_{BC_{\delta}(\mathcal{D}^{2\gamma,\eta}_{w})}\leq\frac{1}{2}\|\mathbf{y}-\mathbf{v},(\mathbf{y}-\mathbf{v})^{'}\|_{BC_{\delta}(\mathcal{D}^{2\gamma,\eta}_{w})}.$$
When $\mathbf{v}\equiv0$, we easily have that $J_{R,d}$ maps $BC_{\delta}(\mathcal{D}^{2\gamma,\eta}_{w})$ into itself.
Applying fixed-point argument, we deduce that $J_{R,d}(w,\mathbf{y},\xi^{u})$ has a unique fixed-point $\Gamma(\xi^{u},w)\in BC_{\delta}(\mathcal{D}^{2\gamma,\eta}_{w})$ for every $\xi^{u}\in\mathcal{H}_{u}$, meanwhile, for $\xi^{u}_{1}$, $\xi^{u}_{2}\in\mathcal{H}_{u}$, we have
\begin{equation}
\begin{aligned}
 &\|\Gamma(\xi^{u}_{1},w)-\Gamma(\xi^{u}_{2},w)\|_{BC_{\delta}(\mathcal{D}^{2\gamma,\eta}_{w})}\\
 &\qquad=\|J_{R,d}(w,\Gamma(\xi^{u}_{1},w),\xi^{u}_{1})-J_{R,d}(w,\Gamma(\xi^{u}_{2},w),
 \xi^{u}_{2})\|_{BC_{\delta}(\mathcal{D}^{2\gamma,\eta}_{w})}\nonumber\\
 &\qquad\leq\|J_{R,d}(w,\Gamma(\xi^{u}_{1},w),\xi^{u}_{1})-J_{R,d}(w,\Gamma(\xi^{u}_{1},w),\xi^{u}_{2})\|_{BC_{\delta}(\mathcal{D}^{2\gamma,\eta}_{w})}\nonumber\\
 &\qquad\quad+\|J_{R,d}(w,\Gamma(\xi^{u}_{1},w),\xi^{u}_{2})-J_{R,d}(w,\Gamma(\xi^{u}_{2},w),\xi^{u}_{2})\|_{BC_{\delta}(\mathcal{D}^{2\gamma,\eta}_{w})}\nonumber\\
 &\qquad\leq\|S^{u}_{\cdot+i-1}(\xi^{u}_{1}-\xi^{u}_{2}),0\|_{BC_{\delta}(\mathcal{D}^{2\gamma,\eta}_{w})}+\frac{1}{2}\|\Gamma(\xi^{u}_{1},w)-\Gamma(\xi^{u}_{2},w)\|_{BC_{\delta}(\mathcal{D}^{2\gamma,\eta}_{w})}\nonumber\\
 &\qquad\leq Ce^{(\alpha-\delta)}\|\xi^{u}_{1}-\xi^{u}_{2}\|+\frac{1}{2}\|\Gamma(\xi^{u}_{1},w)-\Gamma(\xi^{u}_{2},w)\|_{BC_{\delta}(\mathcal{D}^{2\gamma,\eta}_{w})},
\end{aligned}
\end{equation}
which implies that $\Gamma(\xi^{u},w)$ is Lipschitz continuous.
\end{proof}
At last, as similar discussion have taken place in \cite{MR2593602}, \cite{MR11112} and \cite{MR4284415}, we derive a local unstable manifold for our REEs \eqref{2.2}. The proof of following results is identical to the one of \cite{MR11112} and \cite{MR4284415} , we omit here. In the following we denote $B_{\mathcal{H}_{u}}(0,\rho(w))$ as a ball in $\mathcal{H}_{u}$, which is centered at 0 and has a random radius $\rho(w)$.
\begin{lemma}\label{lemma41}
The local unstable manifold of \eqref{2.2} is given by the graph of a Lipschitz function i.e.
\begin{equation}\label{4.15}
\mathcal{M}^{u}_{loc}(w)=\{\xi+h^{u}(\xi,w):\xi\in B_{\mathcal{H}_{u}}(0,\rho(w))\},
\end{equation}
where, $\rho(w)$ is a tempered from below random variable and
$$h^{u}(\xi,w):=\pi^{s}\Gamma(\xi,w)[-1,1]|_{B_{\mathcal{H}_{u}}(0,\rho(w))},$$
that is
\begin{equation}
\begin{aligned}
h^{u}(\xi,w)&=\sum^{0}_{k=-\infty}S^{s}_{-k}\int^{1}_{0}S^{s}_{1-u}\pi^{s}f(\Gamma(\xi,w)[k-1,u])du\nonumber\\
&\quad+\sum^{0}_{k=-\infty}S^{s}_{-k}\int^{1}_{0}S^{s}_{1-u}\pi^{s}g(\Gamma(\xi,w)[k-1,u])d\Theta_{k-1}\mathbf{w}_{u}.
\end{aligned}
\end{equation}

\end{lemma}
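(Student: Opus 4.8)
The plan is to read off the local unstable manifold directly from the unique fixed point $\Gamma(\xi^{u},w)$ of the discretized Lyapunov--Perron map $J_{R,d}$ produced in Theorem \ref{theorem14}. First I would reinterpret the fixed-point equation: a sequence $(\mathbf{y},\mathbf{y}')=(y^{i-1},(y^{i-1})')_{i\in\mathbb{Z}^{-}}\in BC_{\delta}(\mathcal{D}^{2\gamma,\eta}_{w})$ with $y^{i-1}_{0}=y^{i-2}_{1}$ satisfies $\mathbf{y}=J_{R,d}(w,\mathbf{y},\xi^{u})$ if and only if the concatenated path $\tilde y[\tau]:=y^{i-1}_{t}$ (for $\tau=t+i-1$, $t\in[0,1]$, $i\in\mathbb{Z}^{-}$) is a mild solution on $(-\infty,0]$ of the truncated version of \eqref{2.2} with $\pi^{u}\tilde y[0]=\xi^{u}$ and with finite $BC_{\delta}(\mathcal{D}^{2\gamma,\eta}_{w})$-norm. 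This is precisely the content of the passage from \eqref{4.5} to \eqref{4.6}, together with the cocycle identity $\mathbf{w}_{t+s,s}(\omega)=\mathbf{w}_{t,0}(\theta_{s}\omega)$ of Definition \ref{definition6} and the shift property \eqref{3.1} of the rough integral; the compatibility condition $y^{i-1}_{0}=y^{i-2}_{1}$ ensures that $\tilde y$ is genuinely continuous across the integer times. Evaluating $J^{1}_{R,d}$ at $i=0$, $t=1$ and applying $\pi^{s}$ annihilates the term $S^{u}_{0}\xi^{u}$ and all the unstable contributions (since $\pi^{s}S^{u}=0$) and absorbs the remaining stable integral as the $k=0$ summand, which yields exactly the claimed formula $h^{u}(\xi^{u},w)=\pi^{s}\Gamma(\xi^{u},w)[-1,1]$. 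Its Lipschitz continuity in $\xi^{u}$ is inherited from the Lipschitz continuity of $\xi^{u}\mapsto\Gamma(\xi^{u},w)$ established in Theorem \ref{theorem14}, because $\pi^{s}$ is bounded and evaluation at $[-1,1]$ is continuous on $\mathcal{D}^{2\gamma,\eta}_{w}$.

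Next I would set $\mathcal{M}^{u}(w)=\{\xi^{u}+h^{u}(\xi^{u},w):\xi^{u}\in\mathcal{H}_{u}\}$ and establish the dynamical characterization that $\xi_{0}\in\mathcal{M}^{u}(w)$ exactly when $\xi_{0}$ admits a backward orbit $\tilde y$ of the truncated equation with $\tilde y[0]=\xi_{0}$ lying in $BC_{\delta}(\mathcal{D}^{2\gamma,\eta}_{w})$, uniqueness of such an orbit being a consequence of the contraction property from Theorem \ref{theorem14}. Invariance $\varphi(t,w,\mathcal{M}^{u}(w))\subseteq\mathcal{M}^{u}(\theta_{t}w)$ is then immediate: if $\tilde y$ is a backward orbit through $\xi_{0}$ over $w$, then by the cocycle property of $\mathbf{w}$ and the shift-equivariance of the solution operator (Lemma \ref{lemma32}), $\tau\mapsto\tilde y[\tau+t]$ is a backward orbit through $\varphi(t,w,\xi_{0})$ over $\theta_{t}w$ with the same weighted bound, so $\varphi(t,w,\xi_{0})\in\mathcal{M}^{u}(\theta_{t}w)$; the continuation in negative time is handled as in \cite{MR2593602,MR4284415}. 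For the tangency, $\mathbf{y}\equiv0$ is the fixed point corresponding to $\xi^{u}=0$ because $f(0)=Df(0)=0$ and $g(0)=Dg(0)=D^{2}g(0)=0$, whence $h^{u}(0,w)=0$; moreover, since the only linear term of $J_{R,d}$ is the purely unstable map $\xi^{u}\mapsto(S^{u}_{\cdot+i-1}\xi^{u})_{i}$ and the nonlinear parts have vanishing first derivative at the origin, upgrading $\xi^{u}\mapsto\Gamma(\xi^{u},w)$ to a $C^{1}$ map by the standard fiber-contraction argument and differentiating $\Gamma=J_{R,d}(w,\Gamma,\xi^{u})$ at $\xi^{u}=0$ gives $D_{\xi^{u}}\Gamma(0,w)\in\mathcal{H}_{u}$, hence $Dh^{u}(0,w)=0$.

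Finally, to make this a genuinely local statement for \eqref{2.2}, I would localize using the tempered-from-below cut-off radius $R(w)$ of \eqref{2.30} (Lemma \ref{lemma33}). From \eqref{2.31} and \eqref{4.12}, the fixed point obeys $\|y^{i-1},(y^{i-1})'\|_{\mathcal{D}^{2\gamma,\eta}_{w}}\le Ce^{\delta(i-1)}\|\xi^{u}\|$ for all $i\in\mathbb{Z}^{-}$, with $C$ controlled by the gap condition \eqref{4.11}; on the other hand, temperedness from below of $R$ provides, for any $\varepsilon\in(0,\delta)$, a constant $C[\varepsilon,w]>0$ with $R(\theta_{i-1}w)\ge C[\varepsilon,w]e^{-\varepsilon|i-1|}$. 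Choosing a tempered-from-below random radius $\rho(w)$ so small that $C\rho(w)\le\frac{1}{2}C[\varepsilon,w]e^{\delta-\varepsilon}$, for every $\xi^{u}\in B_{\mathcal{H}_{u}}(0,\rho(w))$ the whole backward sequence $\Gamma(\xi^{u},w)$ stays below $R(\theta_{i-1}w)/2$ on each unit interval, so the cut-off is never active; hence $f_{R},g_{R}$ may be replaced by $f,g$ in the formula for $h^{u}$ and $\Gamma(\xi^{u},w)$ is an honest backward orbit of \eqref{2.2}, which gives \eqref{4.15}. The transfer of the discrete-time invariance to the continuous-time random dynamical system is carried out exactly as in \cite{MR2593602,MR4284415}. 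The main obstacle is precisely this localization step: one must control simultaneously the weighted norm of the entire backward sequence $\Gamma(\xi^{u},w)$ and the subexponential fluctuation of $R(\theta_{i-1}w)$ so that the truncation never switches on, and it is here --- not in the fixed-point argument, which is already complete --- that the temperedness of $|w|_{\gamma}$, $|w^{2}|_{2\gamma}$, and therefore of $R(w)$, is indispensable.
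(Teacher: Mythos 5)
Your proposal is correct and takes essentially the same route as the paper, which omits the proof of this lemma and defers to the discrete Lyapunov--Perron localization of \cite{MR2593602}, \cite{MR11112} and \cite{MR4284415}: reading $h^{u}$ off the fixed point $\Gamma(\xi^{u},w)$ evaluated at $[-1,1]$ (the $k=0$ summand absorbing the boundary integral), inheriting Lipschitz continuity from Theorem \ref{theorem14}, and switching off the cut-off on a tempered ball via the bound $\|\Gamma(\xi^{u},w)\|_{BC_{\delta}(\mathcal{D}^{2\gamma,\eta}_{w})}\leq C\|\xi^{u}\|$ combined with $R(\theta_{i-1}w)\geq C[\varepsilon,w]e^{-\varepsilon|i-1|}$ is exactly that argument. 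The one caveat is your extra tangency claim $Dh^{u}(0,w)=0$ obtained by a fiber-contraction upgrade to $C^{1}$: it is not needed for the Lipschitz statement of the lemma and would require differentiability of the truncated coefficients $f_{R},g_{R}$, which is delicate because the cut-off $\varphi(\|y,y^{'}\|_{\mathcal{D}^{2\gamma,\eta}_{w}}/R)$ involves a norm that is not smooth, so that part should be dropped or justified separately.
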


 According to previous analysis, we easily obtain:
\begin{theorem}\label{theorem24}
The local unstable manifold of \eqref{2.2} is given by the graph of a Lipschitz function i.e.
$$\mathcal{M}^{u}_{loc}(w)=\{\xi+h^{u}(\xi,w):\xi\in B_{\mathcal{H}_{u}}(0,\hat{\rho}(w)\},$$
where, $\hat{\rho}(w)$ is a tempered frow below random variable and
$$h^{u}(\xi,w):=\int^{0}_{-\infty}S^{s}_{-u}\pi^{s}f(y_{u})du+\int^{0}_{-\infty}S^{s}_{-u}\pi^{s}g(y_{u})d\mathbf{w}_{u}.$$
\end{theorem}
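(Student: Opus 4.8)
The plan is to transfer the fixed-point result of Theorem \ref{theorem14} from the discrete-time setting to the continuous-time Lyapunov--Perron map, following the scheme of \cite{MR2593602} and \cite{MR4284415}. First I would observe that by Theorem \ref{theorem14} the map $J_{R,d}(w,\cdot,\xi^{u})$ has, for each $\xi^{u}\in\mathcal{H}_{u}$, a unique fixed-point $\Gamma(\xi^{u},w)=(y^{i-1},(y^{i-1})^{'})_{i\in\mathbb{Z}^{-}}\in BC_{\delta}(\mathcal{D}^{2\gamma,\eta}_{w})$, and that the compatibility condition $y^{i-1}_{0}=y^{i-2}_{1}$ built into $BC_{\delta}(\mathcal{D}^{2\gamma,\eta}_{w})$ allows one to glue the pieces $y^{i-1}_{\cdot}$ on the intervals $[i-1,i]$ into a single mildly controlled rough path $\tilde y$ on $(-\infty,0]$ via $\tilde y[\tau]=y^{i-1}_{t}$ for $\tau=t+i-1$. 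One then checks, using the rough-path cocycle property and the shift identity \eqref{3.1} established in Lemma \ref{lemma32}, that this glued path solves the continuous-time fixed-point equation $J(w,\tilde y)=\tilde y$ with $J$ given by \eqref{4.5}; the key point is that the telescoping sums over $k$ in \eqref{4.10} reassemble, via \eqref{4.6}, into the single integrals $\int_{-\infty}^{\tau}$ appearing in \eqref{4.5}, and that the exponential weight $e^{-\delta(i-1)}$ in \eqref{4.9} combined with the dichotomy bounds \eqref{4.2}--\eqref{4.3} makes the improper integrals over $(-\infty,\tau]$ absolutely convergent.

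Next I would define, for $\xi^{u}$ in a ball $B_{\mathcal{H}_{u}}(0,\hat\rho(w))$ of suitably small tempered-from-below radius, the value $h^{u}(\xi^{u},w):=\pi^{s}\tilde y[0]$, which by the fixed-point equation evaluated at $\tau=0$ equals
$$h^{u}(\xi^{u},w)=\int^{0}_{-\infty}S^{s}_{-u}\pi^{s}f(y_{u})\,du+\int^{0}_{-\infty}S^{s}_{-u}\pi^{s}g(y_{u})\,d\mathbf{w}_{u},$$
where $y=\tilde y$ is the glued solution associated to $\xi^{u}$. The Lipschitz continuity of $\xi^{u}\mapsto h^{u}(\xi^{u},w)$ follows from the Lipschitz continuity of $\xi^{u}\mapsto\Gamma(\xi^{u},w)$ proved in Theorem \ref{theorem14} together with the boundedness of $\pi^{s}$ and of the evaluation-at-$0$ functional on $\mathcal{D}^{2\gamma,\eta}_{w}([0,1];\mathcal{H})$. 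Since $f(0)=g(0)=0$ under the standing assumptions of Subsection~2.3, $\tilde y\equiv0$ is the fixed-point for $\xi^{u}=0$, so $h^{u}(0,w)=0$; the tangency $Dh^{u}(0,w)=0$ comes from $Df(0)=Dg(0)=0$ as in \cite{MR2593602}.

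I would then prove that the graph $\mathcal{M}^{u}_{loc}(w)=\{\xi+h^{u}(\xi,w):\xi\in B_{\mathcal{H}_{u}}(0,\hat\rho(w))\}$ is characterized, on the random neighborhood where the cut-off is inactive, as the set of initial data whose backward orbit under $\varphi$ stays in that neighborhood and decays like $e^{\delta\tau}$ as $\tau\to-\infty$; this is the standard Lyapunov--Perron characterization and it identifies \eqref{4.15} of Lemma \ref{lemma41} with the present statement (note $h^{u}(\xi,w)=\pi^{s}\Gamma(\xi,w)[-1,1]$ and, since $y^{-1}_{1}=\tilde y[0]$, the two formulas for $h^{u}$ agree). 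Invariance $\varphi(t,w,\mathcal{M}^{u}_{loc}(w))\subset\mathcal{M}^{u}_{loc}(\theta_{t}w)$ then follows from the cocycle property of $\varphi$ (Lemma \ref{lemma32}) and the shift-equivariance $\Theta_{t}$ of the rough driver, exactly as in \cite{MR4284415} and \cite{MR11112}; the temperedness from below of the radius $\hat\rho(w)$ is inherited from the temperedness of $R(w)$ (Lemma \ref{lemma33}) and of $|w|_{\gamma}$, $|w^{2}|_{2\gamma}$.

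The main obstacle I anticipate is not the algebra of gluing but the convergence and continuity of the improper rough integrals $\int_{-\infty}^{0}S^{s}_{-u}\pi^{s}g(y_{u})\,d\mathbf{w}_{u}$: one must show that the series $\sum_{k=-\infty}^{0}S^{s}_{-k}\int_{0}^{1}S^{s}_{1-u}\pi^{s}g(y_{u+k-1})\,d\Theta_{k-1}\mathbf{w}_{u}$ converges absolutely in $\mathcal{H}$ for $\mathbf{y}\in BC_{\delta}(\mathcal{D}^{2\gamma,\eta}_{w})$, which requires combining the exponential contraction \eqref{4.3} of $S^{s}$ with a uniform-in-$k$ bound on the one-step rough integrals of the form $\|\mathcal{T}^{s}_{R}(\theta_{k-1}w,y^{k-1},(y^{k-1})')[1]\|_{\mathcal{H}}\lesssim(1+|\theta_{k-1}w|_{\gamma}+|\theta_{k-1}w^{2}|_{2\gamma})^{3}\,e^{\delta(k-1)}$ coming from Lemma \ref{Lemma2.4} and the cut-off definition \eqref{2.29}, and then controlling the subexponential growth of the shifted rough-path norms against $e^{-\beta|k|}$ via \eqref{3.4}. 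Once this summability is in hand, the passage from the discrete to the continuous Lyapunov--Perron map, and hence the whole statement, follows the cited references verbatim, so I would only sketch those steps and refer to \cite{MR2593602}, \cite{MR11112}, \cite{MR4284415} for details.
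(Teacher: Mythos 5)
Your proposal is correct and follows essentially the same route the paper takes (and defers to \cite{MR2593602}, \cite{MR11112}, \cite{MR4284415} for): the discrete Lyapunov--Perron fixed point $\Gamma(\xi^{u},w)$ from Theorem \ref{theorem14} is glued via the compatibility condition in $BC_{\delta}(\mathcal{D}^{2\gamma,\eta}_{w})$ and the shift identity \eqref{3.1} into the continuous-time object, the sums in Lemma \ref{lemma41} reassemble into the improper integrals defining $h^{u}$, and Lipschitz continuity, invariance and temperedness of the radius are inherited exactly as you describe. Your additional attention to the absolute convergence of $\sum_{k\le 0}S^{s}_{-k}\mathcal{T}^{s}_{R}(\theta_{k-1}w,\cdot)[1]$, using \eqref{4.3} together with the cut-off bound \eqref{2.29} and the temperedness of the shifted rough-path norms, is precisely the point the paper leaves to the cited references.
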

\subsection{Example}
Consider the $2m$-th order parabolic partial equation
\begin{equation}\nonumber
  \left\{
   \begin{aligned}
     dy(u,x)=&\left(L_{2m}y_{u}(x)+\mu y_{u}(x)+f(y_{u}(x))\right)du+g(y_{u}(x))dw_{u}(x),u\in[0,T],\\
     y(0)= &\xi\in\mathcal{O},\\
     \frac{\partial y}{\partial\nu}(u,x)=&0, (u,x)\in(0,T)\times\partial\mathcal{O}, k=0,1,\cdot\cdot\cdot,m-1.
  \end{aligned}
 \right.
 \end{equation}
where $\frac{\partial}{\partial\nu}$ stands for the normal derivative, $\mathcal{O}$ is a bounded domain in $\mathbb{R}^{d}$ with a smooth boundary,
$$-L_{2m}=\sum_{|\kappa|\leq2m}a_{\kappa}(x)D^{\kappa}$$
is a uniformly elliptic operator with $a_{\kappa}\in\mathcal{C}^{\infty}(\bar{\mathcal{O}})$ and $w$ is a $\gamma$ H\"{o}lder continuous path with $1/3<\gamma\leq1/2$.

 We can consider the above  equation as \eqref{2.2} in the space $\mathcal{H}=L^{2}(\mathcal{O})$. Let
 \begin{small}
 $A=L_{2m}+\mu$, $\mbox{Dom}(A)=H^{2m}(\mathcal{O})\cap H^{m}_{0}(\mathcal{O})$ if $2m>\frac{d}{2}$, thus we have that $\mathcal{H}_{-2\gamma}=H^{2m-2\gamma}_{0}(\mathcal{O})$ and the requirement about $2m$ to such that $2m>\frac{d}{2}+4\gamma$ .
 \end{small}
 As we all know that $A$ has a compact resolvent and has countably many eigenvalues $\lambda_{j}$ of finite multiplicity, that tend to $-\infty$ when $j\rightarrow\infty$. In additional, the associated eigenfunctions $\{e_{j}\}_{j\in\mathbb{N}}$ form an orthogonal basis of $\mathcal{H}$. Set $\mu>0$ sufficiently large such that there exists $j^{*}\in\mathbb{N}$
 $$\lambda_{j^{*}+1}\leq-\beta<0<\alpha\leq\lambda_{j^{*}}.$$
 Let $\mathcal{H}_{u}=\mbox{span}(e_{j}:\lambda_{j}\geq\alpha)$ and $\mathcal{H}_{s}$ be its orthogonal complement space in $\mathcal{H}$. i.e. $\mathcal{H}$ has an invariant splitting $\mathcal{H}=\mathcal{H}_{u}\oplus\mathcal{H}_{s}$. Meanwhile, the nonlinear terms $f$ and $g$ satisfy our assumptions in \eqref{2.2}.
\bibliographystyle{abbrv}

\begin{thebibliography}{10}
\bibitem{MR1723992}
L.~Arnold.
\newblock {\em Random dynamical systems}.
\newblock Springer Monographs in Mathematics. Springer-Verlag, Berlin, 1998.

\bibitem{MR3330818}
I.~Bailleul.
\newblock Flows driven by {B}anach space-valued rough paths.
\newblock In {\em S\'{e}minaire de {P}robabilit\'{e}s {XLVI}}, volume 2123 of
  {\em Lecture Notes in Math.}, pages 195--205. Springer, Cham, 2014.

\bibitem{MR3624539}
I.~Bailleul, S.~Riedel, and M.~Scheutzow.
\newblock Random dynamical systems, rough paths and rough flows.
\newblock {\em J. Differential Equations}, 262(12):5792--5823, 2017.

\bibitem{MR1022628}
P.~Boxler.
\newblock A stochastic version of center manifold theory.
\newblock {\em Probab. Theory Related Fields}, 83(4):509--545, 1989.

\bibitem{MR1178953}
P.~Boxler.
\newblock How to construct stochastic center manifolds on the level of vector
  fields.
\newblock In {\em Lyapunov exponents ({O}berwolfach, 1990)}, volume 1486 of
  {\em Lecture Notes in Math.}, pages 141--158. Springer, Berlin, 1991.

\bibitem{MR3376184}
X.~Chen, A.~J. Roberts, and J.~Duan.
\newblock Centre manifolds for stochastic evolution equations.
\newblock {\em J. Difference Equ. Appl.}, 21(7):606--632, 2015.

\bibitem{MR3072986}
Y.~Chen, H.~Gao, M.~J. Garrido-Atienza, and B.~Schmalfuss.
\newblock Pathwise solutions of {SPDE}s driven by {H}\"{o}lder-continuous
  integrators with exponent larger than 1/2 and random dynamical systems.
\newblock {\em Discrete Contin. Dyn. Syst.}, 34(1):79--98, 2014.

\bibitem{MR2016614}
J.~Duan, K.~Lu, and B.~Schmalfuss.
\newblock Invariant manifolds for stochastic partial differential equations.
\newblock {\em Ann. Probab.}, 31(4):2109--2135, 2003.

\bibitem{MR2110052}
J.~Duan, K.~Lu, and B.~Schmalfuss.
\newblock Smooth stable and unstable manifolds for stochastic evolutionary
  equations.
\newblock {\em J. Dynam. Differential Equations}, 16(4):949--972, 2004.

\bibitem{MR3289240}
J.~Duan and W.~Wang.
\newblock {\em Effective dynamics of stochastic partial differential
  equations}.
\newblock Elsevier Insights. Elsevier, Amsterdam, 2014.

\bibitem{MR4174393}
P.~K. Friz and M.~Hairer.
\newblock {\em A course on rough paths}.
\newblock Universitext. Springer, Cham, [2020] \copyright 2020.
\newblock With an introduction to regularity structures, Second edition of [
  3289027].

\bibitem{MR3226746}
H.~Gao, M.~J. Garrido-Atienza, and B.~Schmalfuss.
\newblock Random attractors for stochastic evolution equations driven by
  fractional {B}rownian motion.
\newblock {\em SIAM J. Math. Anal.}, 46(4):2281--2309, 2014.

\bibitem{MR2660869}
M.~J. Garrido-Atienza, K.~Lu, and B.~Schmalfuss.
\newblock Random dynamical systems for stochastic partial differential
  equations driven by a fractional {B}rownian motion.
\newblock {\em Discrete Contin. Dyn. Syst. Ser. B}, 14(2):473--493, 2010.

\bibitem{MR2593602}
M.~J. Garrido-Atienza, K.~Lu, and B.~Schmalfuss.
\newblock Unstable invariant manifolds for stochastic {PDE}s driven by a
  fractional {B}rownian motion.
\newblock {\em J. Differential Equations}, 248(7):1637--1667, 2010.

\bibitem{MR3479690}
M.~J. Garrido-Atienza, K.~Lu, and B.~Schmalfuss.
\newblock Random dynamical systems for stochastic evolution equations driven by
  multiplicative fractional {B}rownian noise with {H}urst parameters
  {$H\in(1/3,1/2]$}.
\newblock {\em SIAM J. Appl. Dyn. Syst.}, 15(1):625--654, 2016.

\bibitem{MR4040992}
A.~Gerasimovi\v{c}s and M.~Hairer.
\newblock H\"{o}rmander's theorem for semilinear {SPDE}s.
\newblock {\em Electron. J. Probab.}, 24:Paper No. 132, 56, 2019.

\bibitem{MR2091358}
M.~Gubinelli.
\newblock Controlling rough paths.
\newblock {\em J. Funct. Anal.}, 216(1):86--140, 2004.

\bibitem{MR2599193}
M.~Gubinelli and S.~Tindel.
\newblock Rough evolution equations.
\newblock {\em Ann. Probab.}, 38(1):1--75, 2010.

\bibitem{MR4001062}
R.~Hesse and A.~Neam\c{t}u.
\newblock Local mild solutions for rough stochastic partial differential
  equations.
\newblock {\em J. Differential Equations}, 267(11):6480--6538, 2019.

\bibitem{MR4097587}
R.~Hesse and A.~Neam\c{t}u.
\newblock Global solutions and random dynamical systems for rough evolution
  equations.
\newblock {\em Discrete Contin. Dyn. Syst. Ser. B}, 25(7):2723--2748, 2020.

\bibitem{MR11111}
R.~Hesse and A.~Neam\c{t}u.
\newblock Global solutions for semilinear rough partial differential equations.
\newblock 2021.

\bibitem{MR11112}
C.~Kuehn and A.~Neam\c{t}u.
\newblock Center manifolds for rough partial differential equations.
\newblock 2021.

\bibitem{MR1654527}
T.~J. Lyons.
\newblock Differential equations driven by rough signals.
\newblock {\em Rev. Mat. Iberoamericana}, 14(2):215--310, 1998.

\bibitem{MR4284415}
A.~Neam\c{t}u and C.~Kuehn.
\newblock Rough {C}enter {M}anifolds.
\newblock {\em SIAM J. Math. Anal.}, 53(4):3912--3957, 2021.

\bibitem{MR710486}
A.~Pazy.
\newblock {\em Semigroups of linear operators and applications to partial
  differential equations}, volume~44 of {\em Applied Mathematical Sciences}.
\newblock Springer-Verlag, New York, 1983.

\bibitem{MR1555421}
L.~C. Young.
\newblock An inequality of the {H}\"{o}lder type, connected with {S}tieltjes
  integration.
\newblock {\em Acta Math.}, 67(1):251--282, 1936.

\bibitem{MR1640795}
M.~Z\"{a}hle.
\newblock Integration with respect to fractal functions and stochastic
  calculus. {I}.
\newblock {\em Probab. Theory Related Fields}, 111(3):333--374, 1998.
\end{thebibliography}

\end{document}